\let\oldtocsection=\tocsection
\let\oldtocsubsection=\tocsubsection
\let\oldtocsubsubsection=\tocsubsubsection
\renewcommand{\tocsection}[2]{\hspace{0em}\oldtocsection{#1}{#2}}
\renewcommand{\tocsubsection}[2]{\hspace{1em}\oldtocsubsection{#1}{#2}}
\renewcommand{\tocsubsubsection}[2]{\hspace{2em}\oldtocsubsubsection{#1}{#2}}
\def\real     #1{{\mathbb R^{#1}}}
\def\complex  #1{{\mathbb C^{#1}}}
\def\natural  #1{{\mathbb N^{#1}}}
\def\uu       #1#2#3{{#1}^{#2#3}}
\def\ud       #1#2#3{{#1}^{#2}_{\phantom{#2}{#3}}}
\def\ds       {{\text{\normalsize$\rm\frac{d}{ds}$}}}
\def\dt       {{\text{\normalsize$\rm\frac{d}{dt}$}}}
\def\equationcolor {\color{black}}
\def\textcolor     {\color{black}}
\def\bcoleq    {\begin{equation}\equationcolor}
\def\ecoleq    {\textcolor\end{equation}}
\def\bcoleqn   {\equationcolor\begin{eqnarray}}
\def\ecoleqn   {\end{eqnarray}\textcolor}
\def\sind{\operatorname{S}}
\def\tind{\operatorname{T}}
\def\pind{\operatorname{p}}
\def\gind{\operatorname{g}}
\def\F{\mathbf{F}}
\def\f{\mathbf{f}}
\def\a{\mathbf{a}}
\def\b{\mathbf{b}}
\def\dn{\mathrm{d}}
\def\x{\mathbf{x}}
\def\y{\mathbf{y}}
\def\H{\mathbf{H}}
\def\D{\mathrm{D}}
\def\M{\mathrm{M}}
\def\tmax{\mathrm{T}}
\def\gind{\operatorname{g}}
\def\pind{\operatorname{p}}
\def\qind{\operatorname{q}}
\def\sind{\operatorname{S}}
\def\tind{\operatorname{T}}
\def\varmine{{\upphi}}
\newtheorem{theorem}{Theorem}[section]
\newtheorem{lemma}[theorem]{Lemma}
\newtheorem{corollary}[theorem]{Corollary}
\newtheorem{proposition}[theorem]{Proposition}
\newtheorem{definition}[theorem]{Definition}
\theoremstyle{definition}
\newtheorem{remark}[theorem]{Remark}
\newtheorem{example}[theorem]{Example}
\numberwithin{equation}{section}
\newcommand*{\bdot}[1]{%
	\accentset{\mbox{\small\bfseries .}}{#1}}
\newcommand{\scaldot}{\,\begin{picture}(-1,1)(-1,-3)\circle*{3}\end{picture}\hspace{1.5pt}\ }
\begin{document}

\title[Codimension two mean curvature flow of entire graphs]{Codimension two mean curvature flow of entire graphs}

\author[Andreas Savas-Halilaj]{\textsc{Andreas Savas-Halilaj}}
\address{%
         Andreas Savas-Halilaj\newline
	University of Ioannina\newline
	Department of Mathematics\newline
	Section of Algebra and Geometry\newline
	45110 Ioannina, Greece}
\email{ansavas@uoi.gr}

\author[Knut Smoczyk]{\textsc{Knut Smoczyk}}
\address{%
	Knut Smoczyk\newline
	Leibniz University Hannover\newline
	Institute of Differential Geometry\newline
	and Riemann Center for Geometry and Physics\newline
	Welfengarten 1\newline
	30167 Hannover, Germany}
	\email{smoczyk@math.uni-hannover.de}

\date{}

\subjclass[2010]{53E10\and 53C42\and 57R52\and 35K55}
\keywords{%
	Mean curvature flow, singularities, codimension, self-expander.}

\begin{abstract}
	We consider the graphical mean curvature flow of maps $\f:\real{m}\to\real{n}$, $m\ge 2$, and derive estimates on the growth rates of the evolved graphs, based on a new version of the maximum principle for properly immersed submanifolds that extends the well-known maximum principle of \textsc{Ecker} and \textsc{Huisken} derived in their seminal paper \cite{EH}. In the case of uniformly area decreasing maps $\f:\real{m}
\to\real{2}$, $m\ge 2$, we use this maximum principle to show that the graphicality and  the area decreasing property are preserved. Moreover, if the initial graph is asymptotically conical at infinity, we prove that the normalized mean curvature flow smoothly converges to a self-expander.
\end{abstract}
\maketitle

\section*{Introduction and summary}
The mean curvature flow of entire graphs of codimension one in the
{euclidean} space was introduced by \textsc{Ecker} and \textsc{Huisken} in 
their seminal paper \cite{EH}. They proved that the graphical property and 
polynomial growth conditions for the graphical function are preserved under 
the flow. Moreover, if the initial graph satisfies a linear growth condition and is 
asymptotic to a cone at infinity, then one obtains a longtime solution and in 
this case the blow-down of the flow converges to a self-expanding entire 
graph.

In this paper, we consider immersions 
$$\F:\M^m\to\real{m+n}$$
of $m$-dimensional submanifolds in $\real{m+n}$. We say that $\M^m$ moves by mean curvature
flow if there is a one-parameter family $\F_t=\F(\cdot,t)$ of immersions with corresponding images $\M_t=\F_t(\M^m)$ such that
\begin{gather}
\dt \F(p,t)=\H(p,t),\quad \F(p,0)=\F_0(p),\quad p\in \M,\tag{MCF}\label{mcf}
\end{gather}
is satisfied for some initial data $\F_0$. Here $\H(p,t)$ is the mean curvature vector of the submanifold $\M_t$ at $\F(p,t)$.

Throughout this paper we shall assume that $\M_0$ can be written as an entire graph; i.e.
$\M^m=\real{m}$, and there exists a map $\f_0:\real{m}\to\real{n}$ such that
$$\F_0(\x)=(\x,\f_0(\x)), \quad\x\in\real{m}.$$
If the evolving submanifolds stay graphical, then up to tangential diffeomorphisms \eqref{mcf} is equivalent to the quasilinear system
\begin{gather}
\dt\f(\x,t)=\operatorname{tr}_{\gind}\D^2\f,\quad\x\in\real{m},\tag{MCF'}\label{mcf nonp}
\end{gather}
where $\D^2\f$ is the usual \textsc{Hessian} in $\real{m}$ and $\operatorname{tr}_{\gind} $ denotes 
the trace with 
respect to the induced time-dependent metric $\gind$ on the evolving graph. In particular, 
\eqref{mcf nonp} becomes uniformly parabolic if  $\vert \D \f\vert^2$ is uniformly bounded.

	The objective of this work is to investigate the behavior of the mean curvature flow of complete entire graphs in euclidean space in higher codimension. In the first part, based on a refined version of the parabolic maximum principle for complete and properly immersed solutions that we derive in Section \ref{sec1}, we show that various growth conditions of the initial graph are preserved under the  mean curvature flow in any codimension. 
	
In the second part of the paper we apply these results to  uniformly area decreasing maps in 
codimension two, and prove that this property is preserved. In Sections \ref{sec5} and 
\ref{sec6} we address the longtime existence of the flow for  uniformly area decreasing maps 
in codimension two, and show that our family of graphs asymptotically approaches a self-
expander if the initial graph $\F_0$ is asymptotically conical at infinity, i.e. if there exist 
constants $\delta,c>0$ such that
$$
|\F^{\perp}_0|\le c\big(1+|\F_0|\big)^{1-\delta}.
$$
	More precisely, we prove that under the normalized mean curvature flow, the evolved graphs
$\F_t:\real{m}\to\real{m+2}$, $t\ge 0$, converge to a graph $\F_{\infty}:\real{m}\to
\real{m+2}$ satisfying
$$
\F_{\infty}=\H^{\perp}_{\infty},
$$
	where $\{\cdot\}^\perp$ denotes the orthogonal projection to the normal bundle of the submanifold. This result extends \textsc{Ecker} and \textsc{Huisken}'s theorem in the case of entire graphs in codimension two and is sharp. However, the proof significantly differs and is based on a priori estimates for the mean curvature of the evolved submanifolds, some of which are interior in nature.

	Finally, in the appendix, we demonstrate how our method can be used to obtain a \textsc{Bernstein} type result for entire area decreasing graphs of codimension two.

\section{Maximum principles for entire graphs}\label{sec1}
For a fixed point $(\y_0,t)\in\real{m+n}\times\real{}$ let the backward heat kernel $k=k(\y,t)$ in $\real{m+n}$ centered at $(\y_0,t)$ be  defined by $$k(\y,t)=(4\pi\tau)^{-(m+n)/2}\exp\left(\frac{-\vert\y_0-\y\vert^2}{4\tau}\right),\quad\tau:=t_0-t, \quad t_0>t,$$
and let
$$\rho(\y,t):=(4\pi\tau)^{n/2}k(\y,t)=(4\pi\tau)^{-m/2}\exp\left(\frac{-\vert\y_0-\y\vert^2}{4\tau}\right),\quad\tau:=t_0-t, \quad t_0>t.$$
It was shown by {\sc Huisken} \cite{Huisken90}, and later generalized by {\sc Hamilton} \cite{Hamilton93} to arbitrary co- dimension that this gives the monotonicity formula
\begin{equation*}
\dt\int_{\M_t}\rho\, d\mu_t=-\int_{\M_t}\rho\left\vert \H+\frac{1}{2\tau}(\F-\y_0)^\perp\right\vert^2d\mu_t,
\end{equation*}
where $d\mu_t$ is the measure on $\M_t$. More generally for a function $u=u(p,t)$ on a hypersurface $\M$, {\sc Huisken} \cite{Huisken90} derived the formula 
\begin{equation}\label{mon 2}
\dt\int_{\M_t}u\rho\, d\mu_t=\int_{\M_t}\left(\dt u-\Delta u\right)\rho\,d\mu_t-\int_{\M_t}u\rho\left\vert \H+\frac{1}{2\tau}(\F-\y_0)^\perp\right\vert^2d\mu_t,
\end{equation}
which carries over unchanged to any codimension. To guarantee that the integrals are finite
we assume throughout this paper that all the functions that we are going to consider on
$\M^m$ satisfy a suitable sub-exponential growth and integration by parts is permitted; for more details we refer to \cite[Chapter 4]{ecker}.

Since the manifold $\M^m$ is not compact, the usual parabolic maximum principles cannot be applied but based on the monotonicity formula, {\sc Ecker} and {\sc Huisken} \cite{EH} proved the following maximum principle.
\begin{proposition}[{\sc Ecker} and {\sc Huisken} \cite{EH}]\label{prop 1}
	Suppose the function $u=u(p,t)$ satisfies the inequality
	\begin{equation*}\label{est 1}
	\left(\dt-\Delta\right)u\le\langle\a,\nabla u\rangle
	\end{equation*}
	for some vector field $\a$, where $\nabla$ denotes the tangential gradient on $\M^m$.
	If 
	$$a_0=\sup_{\M^m\times[0,t_1]}\vert\a\vert<\infty$$
	for some $t_1>0$, then
	$$\sup_{\M_t}u\le \sup_{\M_0}u$$
	for all $t\in[0,t_1]$.
\end{proposition}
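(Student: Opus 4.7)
The plan is to test the weighted monotonicity formula \eqref{mon 2} against an auxiliary function that vanishes at $t=0$ but detects the positive part of $u-\sup_{\M_0}u$, and to control the resulting differential inequality by Gronwall. I may assume $K:=\sup_{\M_0}u<\infty$, for otherwise the conclusion is vacuous, and set $v:=(u-K)_+$; then $v\equiv 0$ on $\M_0$ and the proposition is equivalent to $v\equiv 0$ on $\M_t$ for all $t\in[0,t_1]$.

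On the open set $\{u>K\}$ one has $\nabla v=\nabla u$ and $\dt v-\Delta v\le\langle\a,\nabla v\rangle$ by hypothesis, so
\begin{equation*}
\dt v^2-\Delta v^2=2v\bigl(\dt v-\Delta v\bigr)-2|\nabla v|^2\le 2v\langle\a,\nabla v\rangle-2|\nabla v|^2\le\tfrac{1}{2}|\a|^2 v^2,
\end{equation*}
the last step being Young's inequality $2v|\a||\nabla v|\le 2|\nabla v|^2+\tfrac12|\a|^2 v^2$. Fixing an arbitrary $\y_0\in\real{m+n}$ and $t_0>t_1$ and inserting $v^2$ for $u$ in \eqref{mon 2}, the $|\H+\tfrac{1}{2\tau}(\F-\y_0)^\perp|^2$ term on the right is non-positive, so
\begin{equation*}
\dt\int_{\M_t}v^2\rho\,d\mu_t\le\frac{a_0^2}{2}\int_{\M_t}v^2\rho\,d\mu_t.
\end{equation*}
Since $v\equiv 0$ initially, Gronwall's inequality yields $\int_{\M_t}v^2\rho\,d\mu_t=0$ for $t\in[0,t_1]$, and because $\rho>0$ throughout $\M\times[0,t_1]$ this forces $v\equiv 0$.

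The principal technical point to watch is that $v^2$ is only $C^{1,1}$, so the pointwise differential inequality and the insertion into \eqref{mon 2} should be justified by first replacing $v$ with $v_\epsilon:=(u-K-\epsilon)_+$ or by a smooth approximation, deriving the estimate for the approximants, and passing to the limit $\epsilon\downarrow 0$. A second ingredient is that \eqref{mon 2} rests on integration by parts over a non-compact submanifold, which requires the sub-exponential growth controls recalled after \eqref{mon 2}; these have to be verified for $v^2$, $|\nabla v|^2$ and $v^2|\a|^2$ under the hypotheses, but the Gaussian decay of $\rho$ combined with the boundedness of $|\a|$ and an a priori polynomial growth on $u$ makes this step routine.
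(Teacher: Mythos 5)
Your argument is correct and is essentially the argument this paper uses to prove the more general Proposition \ref{prop 2} (the paper itself cites Proposition \ref{prop 1} from \textsc{Ecker}--\textsc{Huisken} without reproving it): both introduce the truncated function $(u-K)_+$ with $K=\sup_{\M_0}u$, derive $(\dt-\Delta)v^2\le c\,v^2$ via Young's inequality, insert $v^2$ into the weighted monotonicity formula \eqref{mon 2}, drop the non-positive term, and apply Gronwall from zero initial data. Your caveats about the $C^{1,1}$ regularity of $v^2$ and about the integrability needed to justify \eqref{mon 2} over the non-compact $\M_t$ are exactly the technical points the paper also flags (working with locally Lipschitz truncations and assuming sub-exponential growth), so the proposal matches the paper's route in substance and in its handling of the fine print.
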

This maximum principle is very powerful and served as one of the key techniques in \cite{EH} but it cannot be applied in many cases because of the required special form of the evolution inequality. We will now derive more general versions of this principle.
\begin{proposition}\label{prop 2}
	Suppose the function $u=u(p,t)$ satisfies the inequality
	\begin{equation*}\label{est 2}
	\left(\dt-\Delta\right)u\le \langle\a,\nabla u\rangle+\b u
	\end{equation*}
	for a vector field $\a$ and a function $\b$, where $\a$, $\b$ are uniformly bounded on
	$\M^m\times[0,t_1]$, for some $t_1>0$. Then
	$${\sup}_{\M_t}u\le{\sup}_{\M_0}u \cdot e^{\beta t},\, t\in[0,t_1],$$
	where
	$$\beta:=\begin{cases}\sup_{\M\times[0,t_1]}\b&\text{, if }\,\,\sup_{\M_0}u >0,\\
	\inf_{\M\times[0,t_1]}\b&\text{, if }\,\,\sup_{\M_0}u <0,\\
	0&\text{, if }\,\,\sup_{\M_0}u =0.
	\end{cases}$$
\end{proposition}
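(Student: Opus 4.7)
The plan is to adapt the Ecker--Huisken proof of Proposition~\ref{prop 1} by running the weighted $L^2$ argument with the Lipschitz truncation
$$\phi(p,t) := \bigl(u(p,t) - k\, e^{\beta t}\bigr)_+, \qquad k := \sup_{\M_0} u.$$
Since $\phi(\cdot,0) \equiv 0$, it suffices to prove that $\phi \equiv 0$ on $\M^m\times[0,t_1]$. The first task is to verify that the three-case definition of $\beta$ is precisely what neutralizes the zeroth-order source. On the open set $\{\phi>0\}$ the identity $u = \phi + k\,e^{\beta t}$ converts the hypothesis into
$$(\dt - \Delta)\phi \le \langle \a, \nabla \phi\rangle + \b\, \phi + (\b - \beta)\, k\, e^{\beta t}.$$
When $k > 0$ and $\beta = \sup \b$ one has $\b - \beta \le 0$; when $k < 0$ and $\beta = \inf \b$ one has $\b - \beta \ge 0$, whose sign is reversed by $k < 0$; when $k = 0$ the term vanishes. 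In all three cases the extra summand is non-positive, so
$$(\dt-\Delta)\phi \le \langle\a, \nabla \phi\rangle + \b\, \phi.$$

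The next step is a weighted energy estimate. Set $a_0 := \sup_{\M\times[0,t_1]}|\a|$ and $b_0 := \sup_{\M\times[0,t_1]}|\b|$. Multiplying the inequality above by $2\phi \geq 0$, absorbing $2\phi\langle\a,\nabla\phi\rangle \le |\nabla\phi|^2 + a_0^2\phi^2$ via Young's inequality, and using $2\b\phi^2 \le 2b_0\phi^2$ yields
$$(\dt-\Delta)\phi^2 = 2\phi(\dt-\Delta)\phi - 2|\nabla\phi|^2 \le \bigl(a_0^2 + 2b_0\bigr)\, \phi^2.$$
Fixing any $t_0 > t_1$ in the backward heat kernel weight $\rho$, inserting $\phi^2$ into Huisken's monotonicity formula~\eqref{mon 2} and discarding the non-positive geometric term on its right gives
$$\dt \int_{\M_t} \phi^2\, \rho\, d\mu_t \le \bigl(a_0^2 + 2b_0\bigr) \int_{\M_t} \phi^2\, \rho\, d\mu_t.$$
Since the initial value vanishes, Gr\"onwall's lemma forces this integral to stay identically zero on $[0,t_1]$, and the pointwise positivity of $\rho$ then gives $\phi \equiv 0$, i.e., $u \le k\, e^{\beta t}$, as claimed.

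The main technical obstacle I expect is the non-smoothness of $\phi$, which is only Lipschitz; both the pointwise inequality for $(\dt-\Delta)\phi$ and the identity for $(\dt-\Delta)\phi^2$ therefore have to be read distributionally, or one first runs the argument on the smooth approximation $\phi_\varepsilon := \sqrt{\phi^2+\varepsilon^2} - \varepsilon$ and then sends $\varepsilon \downarrow 0$. A secondary point is to check that $\phi^2$ inherits from $u$ the sub-exponential growth required to justify integration by parts in~\eqref{mon 2}, but this is immediate from the standing hypothesis of the paper. Both issues are treated exactly as in the Ecker--Huisken proof of Proposition~\ref{prop 1}, so the adaptation is essentially bookkeeping.
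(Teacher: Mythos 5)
Your proof is correct and follows essentially the same route as the paper: both set $\mu(t)=\sup_{\M_0}u\cdot e^{\beta t}$, observe that the three-case choice of $\beta$ makes the source term $(\b-\beta)\mu$ non-positive, truncate to $u_\mu=(u-\mu)_+$, pass to $u_\mu^2$ via Young's inequality to get $(\dt-\Delta)u_\mu^2\le c\,u_\mu^2$, and then close the argument with Huisken's weighted monotonicity formula \eqref{mon 2}. The only cosmetic differences are that you invoke Gr\"onwall explicitly and mention the smooth approximation $\sqrt{\phi^2+\varepsilon^2}-\varepsilon$, whereas the paper leaves both implicit.
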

\begin{proof} The function $\mu:[0,t_1]\to\mathbb{R}$ given by
	$\mu(t)=\sup_{\M_0}u \cdot e^{\beta t}$
	is the solution of
	$$
\bdot\mu(t)=\beta\mu(t),\quad \mu(0)={\sup}_{\M_0}u.
	$$
	Note that
	\begin{eqnarray*}
		\left(\dt-\Delta\right)(u-\mu)&\le&  \langle\a,\nabla (u-\mu)\rangle+\b (u-\mu)+(\b-\beta)\mu.
	\end{eqnarray*}
	By our choice of $\beta$ we get always
	$$(\b-\beta)\mu\le 0,\quad \text{for all}\quad t\in[0,t_1].$$
	Let us consider now the locally  \textsc{Lipschitz} continuous function
	$u_\mu:\M^m\times[0,t_1]\to\mathbb{R}$ given by
	$u_\mu:=\max\{u-\mu,0\}.$
	We claim that $u_\mu\equiv 0$. Indeed, the function $u_{\mu}$
	satisfies
	\begin{eqnarray*}
		\left(\dt-\Delta\right)u_\mu&\le&  \langle\a,\nabla u_\mu\rangle+\delta u_\mu,
	\end{eqnarray*}
	for all $t\in[0,t_1]$ and $\delta=\sup_{\M\times[0,t_1]}\b$. Using the boundedness of $\a$ and \textsc{Young}'s inequality we get
	\begin{eqnarray*}
		\left(\dt-\Delta\right)u_\mu^2&\le&  2u_\mu\langle\a,\nabla u_\mu\rangle-2\vert\nabla u_\mu\vert^2+2\delta u_\mu^2\le cu_\mu^2,
	\end{eqnarray*}
	for some non-negative constant $c$ and for all $t\in[0,t_{max}]$. We may now employ \eqref{mon 2} with $u_\mu^2$ instead of $u$ and $t_0=t_1$, $\y_0$ arbitrary in the definition of $\rho$ to conclude
	$$\dt\int_{\M_t}u_\mu^2\rho\,d\mu_t\le c\int_{\M_t}u_\mu^2\rho\,d\mu_t
	\quad\text{and}\quad \int_{\M_0}u_\mu^2\rho\,d\mu_0=0,$$
	which gives $u_\mu\equiv 0$, for all $t\in[0,t_1]$. This finishes the proof.
\end{proof}
\begin{proposition}
	Let $W\subset\real{}$ be a closed domain and $\Phi:W\to\real{}$ a uniformly \textsc{Lipschitz} continuous function.
	Suppose the function $u=u(p,t)$ takes values in $W$ for all $(p,t)\in \M^m\times[0,t_1]$,
	for some $t_1>0$, and that $u$
	satisfies the inequality
	\begin{equation*}\label{est 3}
	\left(\dt-\Delta\right)u\le \langle\a,\nabla u\rangle+\Phi(u),
	\end{equation*}
	where the vector field $\a$ is uniformly bounded. If $\sup_{\M_0}u<\infty$, and
	$k:[0,t_0]\to\real{}$ is the solution of the ODE
	$$
	\begin{cases}
	\bdot k(t)=\Phi(k(t)),\\
	k(0)=\mu_0:=\sup_{\M_0}u,
	\end{cases}$$
	for some $t_0\in(0, t_1]$, then
	$${\sup}_{\M_t}u\le k(t), \,\text { for all } t\in[0,t_0].$$
\end{proposition}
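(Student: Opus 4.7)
The strategy is to reduce to the setting of Proposition \ref{prop 2} by comparing $u$ directly with the ODE solution $k(t)$. Define
$$
v(p,t):=u(p,t)-k(t),\qquad (p,t)\in\M^m\times[0,t_0].
$$
Since $k$ depends only on time and solves $\bdot k=\Phi(k)$, one has $(\dt-\Delta)k=\Phi(k(t))$, so the hypothesis on $u$ together with the spatial constancy of $k$ yields
$$
\left(\dt-\Delta\right)v\le\langle\a,\nabla v\rangle+\Phi(u)-\Phi(k).
$$

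Now exploit the uniform \textsc{Lipschitz} continuity of $\Phi$ on $W$: let $L$ be a \textsc{Lipschitz} constant. Since the range of $k$ lies in $W$ (the ODE would not otherwise be defined on $[0,t_0]$) and since $u\in W$ by assumption, we have $\Phi(u)-\Phi(k)\le L\,|u-k|$. In particular, on the set $\{v>0\}$ we obtain $\Phi(u)-\Phi(k)\le L\,v$. The locally \textsc{Lipschitz} function $v_+:=\max\{v,0\}$ therefore satisfies, wherever differentiable,
$$
\left(\dt-\Delta\right)v_+\le\langle\a,\nabla v_+\rangle+L\,v_+,
$$
and by construction $v_+(\cdot,0)\equiv 0$ because $u(\cdot,0)\le\mu_0=k(0)$.

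From here the argument is identical to the last part of the proof of Proposition \ref{prop 2}: multiply by $v_+$ and apply \textsc{Young}'s inequality (using the uniform bound on $\a$) to get
$$
\left(\dt-\Delta\right)v_+^2\le 2v_+\langle\a,\nabla v_+\rangle-2|\nabla v_+|^2+2L\,v_+^2\le c\,v_+^2
$$
for a constant $c=c(\sup|\a|,L)$. Plugging $v_+^2$ into the monotonicity identity \eqref{mon 2} with $t_0$ in place of $t_1$ and any base point $\y_0$ in the definition of $\rho$ yields
$$
\dt\int_{\M_t}v_+^2\,\rho\,d\mu_t\le c\int_{\M_t}v_+^2\,\rho\,d\mu_t,\qquad \int_{\M_0}v_+^2\,\rho\,d\mu_0=0.
$$
\textsc{Grönwall}'s inequality then forces $v_+\equiv 0$ on $\M^m\times[0,t_0]$, which is precisely the claimed bound $\sup_{\M_t}u\le k(t)$.

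\textbf{Main obstacle.} The only subtlety is the justification of the parabolic inequality for the merely \textsc{Lipschitz} function $v_+$ and the attendant use of the weighted integration-by-parts needed for \eqref{mon 2}. This is handled exactly as in the proof of Proposition \ref{prop 2}, invoking the standing sub-exponential growth assumption stated after \eqref{mon 2}; no new analytic input is required.
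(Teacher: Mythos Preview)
Your proof is correct and follows essentially the same route as the paper: subtract the ODE solution $k(t)$, use the \textsc{Lipschitz} bound to control $\Phi(u)-\Phi(k)$ by $L|u-k|$, pass to the positive part $v_+=\max\{u-k,0\}$, square, and then feed $v_+^2$ into the weighted monotonicity identity \eqref{mon 2} exactly as in Proposition~\ref{prop 2}. The paper's argument is identical in structure and detail; your explicit remark that $k$ takes values in $W$ is a small clarification the paper leaves implicit.
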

\begin{proof}
	The function $u-k$ satisfies
	$$\left(\dt-\Delta\right)(u-k)\le \langle\a,\nabla (u-k)\rangle+\Phi(u)-\Phi(k).$$
	Since $\Phi$ is uniformly \textsc{Lipschitz} on $W$, there exists a constant $L\ge 0$ such that
	$$\vert\Phi(u)-\Phi(k)\vert\le L\vert u-k\vert,\,\text{  for all }u,k\in W.$$
	Therefore
	$$\left(\dt-\Delta\right)(u-k)\le \langle\a,\nabla (u-k)\rangle+L\vert u-k\vert.$$
	Similarly as in the proof of Proposition \ref{prop 2} define the function
	$u_k:=\max\{u-k,0\}.$
	Then, in the weak sense, we get
	$$\left(\dt-\Delta\right)u_k\le \langle\a,\nabla u_k\rangle+Lu_k,$$
	for all $t\in[0,t_0]$. This implies
	$$\left(\dt-\Delta\right)u_k^2\le cu_k^2,$$
	for some constant $c$ and we may proceed as in the proof of Proposition \ref{prop 2} to conclude that $u_k\equiv 0$, for all $t\in[0,t_0]$.
\end{proof}

\section{Short time existence}\label{sec2}
Consider a smooth map $\f:\real{m}\to \real{n}$, fix a point $\x\in \real{m}$ and let $\lambda^2_{1}\ge\displaystyle{\dots}\ge\lambda^2_{m}$ denote the eigenvalues of $\f^{*}\langle\cdot,\cdot\rangle_{\real{n}}$ at $\x$ with respect to $\langle\cdot,\cdot\rangle_{\real{m}}$. The corresponding
values $\lambda_i\ge  0$, $i\in\{1,\dots,m\}$, are the {\em singular values} of the differential $\D\f$ of $\f$ at the point $\x$. The singular values are \textsc{Lipschitz} continuous functions on $\real{m}$. 

\begin{proposition}[Short time existence]\label{ste}
	Let $\M_0$ be the graph of a smooth map $\f:\real{m}\to\real{n}$ with $\sup_{\M_0}\vert\D\f\vert$, $\sup_{\M_0}\vert\nabla^{\ell} A\vert$ bounded for all $\ell\ge 0$, where $A$ denotes the second fundamental form of the graph. Then there exists a unique smooth solution on some short time interval $[0,t_0)$, $t_0>0$, with 
	$${\sup}_{\M_t}\vert\D\f\vert<\infty\quad\text{and}\quad {\sup}_{\M_t}\vert\nabla^{\ell} A\vert<\infty,$$
	for all $\ell\ge 0$ and $t\in[0,t_0)$.
\end{proposition}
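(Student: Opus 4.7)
The plan is to pass to the non-parametric system (MCF'). Since $\sup_{\M_0}|\D\f|<\infty$, the singular values of $\D\f_0$ are uniformly bounded and the induced metric
\[
g_{ij}=\delta_{ij}+\langle\partial_i\f_0,\partial_j\f_0\rangle
\]
has eigenvalues uniformly bounded above and below away from zero. Consequently (MCF') is a uniformly parabolic quasilinear system on $\real{m}$ with smooth coefficients depending on $\D\f$. The hypothesis $\sup_{\M_0}|\nabla^{\ell}A|<\infty$ for all $\ell\ge 0$, combined with the bound on $|\D\f_0|$, translates into uniform bounds on all euclidean derivatives $|\D^{\ell}\f_0|$ of the initial height function.

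First I would invoke a standard short-time existence theorem for uniformly parabolic quasilinear systems on $\real{m}$ with smooth, bounded initial data, available for instance from Eidel'man's or the Ladyzhenskaya--Solonnikov--Uraltseva theory of parabolic systems in unbounded domains. Concretely, linearizing (MCF') about $\f_0$ produces a linear parabolic system whose coefficients are uniformly bounded and H\"older continuous; classical solvability in weighted H\"older spaces together with a Banach fixed point argument close the nonlinear problem on a short interval $[0,t_0')$. Bootstrapping via Schauder estimates applied to successive spatial derivatives, and using that both the initial data and the nonlinear coefficients are smooth with bounded derivatives of every order, upgrades the solution to $\f\in C^\infty(\real{m}\times[0,t_0'))$ with $\sup_{\real{m}}|\D^k\f(\cdot,t)|<\infty$ for every $k\ge 0$ and every $t\in[0,t_0')$. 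In particular $|\D\f|$ remains bounded so the evolving submanifolds remain entire graphs, and bounded euclidean derivatives of $\f$ translate into bounded covariant derivatives $|\nabla^\ell A|$ of the second fundamental form.

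Uniqueness follows by comparing two such solutions: the squared difference of their height functions satisfies a linear parabolic inequality of the form $(\partial_t-\Delta)u\le\langle\a,\nabla u\rangle+\b u$ with uniformly bounded coefficients, to which Proposition \ref{prop 2} applies and forces $u\equiv 0$.

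The main obstacle is the non-compactness of the base $\real{m}$: neither short-time existence nor the usual parabolic maximum principle are directly available in this setting, and the natural geometric quantities a priori need only be finite on compact subsets rather than uniformly on $\real{m}$. The unbounded-domain parabolic PDE theory, combined with the refined maximum principles of Section \ref{sec1}, is precisely what bridges this gap and delivers a smooth short-time solution satisfying all the required derivative bounds.
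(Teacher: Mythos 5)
Your argument follows essentially the same route as the paper: both hinge on the observation that a uniform bound on $|\D\f_0|$ makes the non-parametric system (MCF') uniformly parabolic, and then appeal to a standard short-time existence theorem for quasilinear parabolic systems on $\real{m}$ with smooth bounded data. The paper cites the implicit function theorem (following Ecker--Huisken, Chau--Chen--He, and Hamilton); your linearization plus Banach fixed point plus Schauder bootstrap is a concrete realization of that same mechanism, so the two proofs are morally identical. One small caveat in your uniqueness paragraph: Proposition \ref{prop 2} is stated for a function living on a single mean curvature flow and its proof uses the monotonicity formula for that flow, whereas the squared difference of two height functions is a function on $\real{m}\times[0,t_0')$ satisfying a linear parabolic inequality with respect to the metric of only one of the solutions; to invoke Proposition \ref{prop 2} literally one must first identify this with a function on the first flow and absorb the discrepancy between the Laplace--Beltrami operator and $g_1^{ij}D^2_{ij}$ into the drift term $\langle\a,\nabla u\rangle$. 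This can be done, but it is worth stating --- alternatively a direct Phragm\'en--Lindel\"of uniqueness theorem for linear parabolic systems on $\real{m}$ with bounded coefficients gives the same conclusion more transparently, and it is consistent with the paper's remark that uniqueness holds only within the class of solutions with uniformly bounded derivatives.
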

\begin{proof}
	The proof follows similarly as in \cite[Theorem 4.6]{EH}; see also \cite[Proposition 5.1]{CCH12}) and
\cite{Hamilton82}. Since all singular values of the initial map $\f$ are uniformly bounded, the linearization\footnote{The coefficients of the symbol are given by the inverse of the \textsc{Riemannian} metric $\gind$ on $\M_0$.} of the quasilinear system \eqref{mcf nonp} becomes uniformly parabolic on $\real{m}$, and a unique smooth solution with
$${\sup}_{\M_t}\vert\D\f\vert<\infty\quad\text{and}\quad{\sup}_{\M_t}\vert\nabla^\ell A\vert<\infty,
\quad\ell\ge 0,
$$
is guaranteed on some short time interval by the implicit function theorem. 
\end{proof}
	It should be noted here that, as in the case for the standard heat equation,  the uniqueness of the solution to the initial value problem in Proposition \ref{ste} is guaranteed only within the class of maps that satisfy
	$${\sup}_{\M_t}\vert\D\f\vert<\infty\quad\text{and}\quad{\sup}_{\M_t}\vert\nabla^\ell A\vert<\infty,
	\quad\ell\ge 0.
	$$
	In general, for complete and non-compact initial data, the existence and uniqueness of  solutions of \eqref{mcf} is not automatic;
	for more details see
	\cite{DS23}, \cite{ES}, \cite{Giga} and  \cite{Saez}. Moreover,  rather recently there have been some interesting works addressing the longtime behavior of the mean curvature flow of graphical submanifolds in euclidean spaces under the mean curvature flow; see for example \cite{CCY13}, \cite{CCH12}, \cite{DJX23}, \cite{Lubbe1} and \cite{Lubbe2}.

In the sequel we assume that $\M_t$, $t\in[0,\tmax)$, is a smooth solution of the mean curvature flow, such that $\M_0$ is the graph of a smooth map $\f:\real{m}\to\real{n}$ with
\begin{gather}\label{initial}
{\sup}_{\M_0}\vert\nabla^\ell A\vert<\infty,\text { for all }\,\ell\ge 0,\tag{$\ast$}
\end{gather}
where
$$\tmax:=\sup\bigl\{t_0\ge 0:\text{a smooth solution of \eqref{mcf} exists on $[0,t_0)$}\bigr\}$$
denotes the maximal time of existence.
To obtain longtime existence results, it is crucial to derive a priori estimates for the curvature. Let us introduce a few tensors in terms of the second fundamental form $A$ and its mean curvature $\H$. We define
\begin{eqnarray*}
	A^{\H}(v,w)&:=&\langle\H, A(v,w)\rangle,\\
	(A\scaldot A)(v_1,v_2,w_1,w_2)&:=&\langle A(v_1,v_2),A(w_1,w_2)\rangle,
\end{eqnarray*}
where all vectors are tangent vectors. The evolution equations of the mean curvature $\H$ and of the second fundamental form $A$ are well known; see for example \cite{Smoczyk12}.
\begin{lemma}\label{lemma 2.2}
	The evolution equations of $\,\H$, $\vert \H\vert^2$, and $\vert A\vert^2$ are given by
	\begin{eqnarray*}
	\Bigl(\nabla^\perp_\dt-\Delta^\perp\Bigr)\H&=&\langle A^{\H},A\rangle,\label{mcf mean}\\
	\Bigl(\dt-\Delta\Bigr)\vert \H\vert^2&=&-2\vert\nabla^\perp\H\vert^2+2\vert A^{\H}\vert^2,\label{mcf 5}\\
	\Bigl(\dt-\Delta\Bigr)\vert A\vert^2&=&-2\vert\nabla^\perp A\vert^2+2\vert A\scaldot A\vert^2+2\vert \text{\rm R}^\perp\vert^2\label{mcf 8},
	\end{eqnarray*}
	where $\nabla^\perp$ denotes the connection of the normal bundle, and $\text{\,\rm R}^\perp$ is its curvature tensor.
\end{lemma}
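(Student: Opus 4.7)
The plan is to derive the three evolution equations directly from the defining equation $\partial_t \F = \H$ together with the standard structure equations (\textsc{Gauss}, \textsc{Codazzi}, \textsc{Ricci}) for the evolving submanifold $\M_t \subset \real{m+n}$. These are classical mean curvature flow variation formulas, so my aim is to indicate the mechanism rather than grind through the full index calculation.

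First I would record the basic variational identities. From $\partial_t \F = \H$ and the fact that $\H$ is a normal vector, one obtains
$$\partial_t g_{ij} = -2\, A^{\H}_{ij}, \qquad \partial_t g^{ij} = 2\, g^{ik} g^{jl} A^{\H}_{kl},$$
using $\langle \partial_i \H, \partial_j \F\rangle = -\langle \H, \nabla_i \partial_j \F\rangle = -A^{\H}_{ij}$ together with the normality of $\H$. The induced connection on the normal bundle evolves accordingly, and the operator $\nabla^\perp_{\partial_t}$ acts on normal tensor fields along $\M_t$ in the usual covariant way.

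Next, for the evolution of $\H$ itself, writing $\H = g^{ij} A_{ij}$ in a local frame, differentiating in $t$, and invoking the \textsc{Codazzi} identity $\nabla^\perp_k A_{ij} = \nabla^\perp_i A_{kj}$ allows one to recognize $g^{ij} \nabla^\perp_{\partial_t} A_{ij}$ as $\Delta^\perp \H$ up to commutator corrections. Combined with the contribution $\partial_t g^{ij} \cdot A_{ij} = 2\langle A^{\H}, A\rangle$, viewed as a normal vector through full contraction, this yields the first identity. The equation for $|\H|^2$ follows at once from the product rule $\partial_t |\H|^2 = 2\langle \H, \nabla^\perp_{\partial_t} \H\rangle$, the identity $\Delta |\H|^2 = 2\langle \H, \Delta^\perp \H\rangle + 2|\nabla^\perp \H|^2$, and the observation $\langle \H, \langle A^{\H}, A\rangle\rangle = |A^{\H}|^2$.

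For $|A|^2$ the crucial ingredient is a \textsc{Simons}-type identity for the normal Laplacian,
$$\Delta^\perp A_{ij} = \nabla^\perp_i \nabla^\perp_j \H + (\text{quadratic terms in } A) + (\text{terms involving } \operatorname{R}^\perp),$$
obtained by commuting the two tangential covariant derivatives inside $\Delta^\perp A$ and reading off the curvature of the normal bundle through the \textsc{Ricci} equation. Pairing with $A$, using $\Delta |A|^2 = 2\langle A, \Delta^\perp A\rangle + 2|\nabla^\perp A|^2$ and the evolution of $A$ (derived analogously to that of $\H$), one recovers the third formula, with $|A \scaldot A|^2$ and $|\operatorname{R}^\perp|^2$ arising precisely as the squared norms of the commutator contributions. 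The main obstacle is the bookkeeping in higher codimension: unlike the hypersurface setting of \cite{EH}, the normal bundle carries a nontrivial curvature $\operatorname{R}^\perp$, and one must carefully track all normal-bundle commutators when applying the \textsc{Simons} identity; a clean treatment is given in \cite{Smoczyk12}.
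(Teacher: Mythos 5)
Your sketch is correct, and since the paper itself offers no proof but only cites the survey \cite{Smoczyk12}, your derivation follows essentially the same route that the cited reference takes: differentiate the metric and second fundamental form along the flow, use \textsc{Gauss}--\textsc{Codazzi}--\textsc{Ricci} to convert time derivatives into normal Laplacians, and trace. One place that deserves more care if you were to write this out fully is the evolution of $\H$: the term $\partial_t g^{ij}\cdot A_{ij}=2\langle A^{\H},A\rangle$ is only half the story, since $g^{ij}\nabla^\perp_{\partial_t}A_{ij}$ is not merely $\Delta^\perp\H$ but $\Delta^\perp\H-\langle A^{\H},A\rangle$, and the correct coefficient $1$ in $\langle A^{\H},A\rangle$ arises only after this partial cancellation; your phrase \textquotedblleft up to commutator corrections\textquotedblright\ covers this, but a careless reading of the sentence that follows could suggest the final coefficient should be $2$, so it is worth making the cancellation explicit. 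The remaining two equations follow from your outline exactly as you say, and the identification of $|A\scaldot A|^2$ and $|\operatorname{R}^\perp|^2$ in the \textsc{Simons}-type computation via the \textsc{Ricci} equation is the right mechanism.
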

\begin{remark}
	In the sequel it will be necessary to have a good estimate for the curvature terms on the right hand side of the evolution equation for $\vert A\vert^2$. By a well known result (see \cite[Theorem 1]{LiLi92}) one always has
	\begin{equation}\label{secest}
	2\vert A\scaldot A\vert^2+2\vert \text{\rm R}^\perp\vert^2\le 3\vert A\vert^4.
	\end{equation}
\end{remark}

Using the evolution equations of $A$ and of the Levi-Civita connection $\nabla$, one can proceed similarly as in \cite[Theorem 7.1]{Huisken84} to prove the next  lemma.
\begin{lemma}\label{lemma 2.4}
	For arbitrary $\ell\ge 0$ there exists a constant $C(\ell,m)$, depending only on $\ell$ and $m$, such that
	\begin{equation*}\label{mcf 10}
	\left(\dt-\Delta\right)\vert\nabla^{\ell}A\vert^2\le-2\vert\nabla^{\ell+1}A\vert^2+C(\ell,m)\sum_{i+j+k=\ell}\vert\nabla^iA\vert\vert\nabla^jA\vert\vert\nabla^kA\vert\vert\nabla^{\ell}A\vert.
	\end{equation*}
\end{lemma}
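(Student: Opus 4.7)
The plan is to prove this by induction on $\ell$, following the scheme of \cite[Theorem 7.1]{Huisken84} but carried out in the codimension $n\ge 2$ setting in which the Simons-type commutator also involves the normal curvature. Throughout I will use the schematic $*$-notation, writing $S*T$ for any tensor obtained from $S$ and $T$ by contractions using the induced metric $\gind$ and the normal inner product, so that $|S*T|\le c(m)\,|S|\,|T|$.

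The base case $\ell=0$ is already contained in Lemma \ref{lemma 2.2} together with the estimate \eqref{secest}: one has
\begin{equation*}
\left(\dt-\Delta\right)|A|^2\le -2|\nabla^\perp A|^2+3|A|^4=-2|\nabla^\perp A|^2+3|A|\,|A|\,|A|\,|A|,
\end{equation*}
which is the claimed inequality with the single term $i=j=k=0$ on the right. Note that $|\nabla^{\perp} A|^2\le |\nabla A|^2$ up to terms of lower order that are absorbed into the cubic sum, since the tangential and normal covariant derivatives of $A$ differ by contractions of $A$ with $A$ via the Weingarten relations.

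For the inductive step, I would first derive the evolution equation of $\nabla^\ell A$ in schematic form. The commutator $[\dt,\nabla]$ on a tensor $T$ under \eqref{mcf} is, by the evolution $\dt\gind=-2A^\H$ and the resulting variation of the Christoffel symbols, of the schematic type $\nabla A*\H*T$, and hence $[\dt,\nabla^\ell] A$ produces terms $\sum_{p+q+r=\ell}\nabla^p A*\nabla^q A*\nabla^r A$. Likewise, $[\Delta,\nabla^\ell] A$ is controlled by the intrinsic Riemann curvature, which is quadratic in $A$ via Gauss, and by the normal curvature $\text{R}^\perp$, which is also quadratic in $A$ via Ricci; differentiating $\ell$ times and using the Codazzi identity therefore yields the same type of sum. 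Combining with the evolution equation for $A$ from Lemma \ref{lemma 2.2} gives
\begin{equation*}
\left(\dt-\Delta\right)\nabla^\ell A=\sum_{i+j+k=\ell}\nabla^i A*\nabla^j A*\nabla^k A,
\end{equation*}
where the sum has a universal number of terms with coefficients depending only on $\ell$ and $m$.

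The final step is to compute
\begin{equation*}
\left(\dt-\Delta\right)|\nabla^\ell A|^2=2\langle\nabla^\ell A,\left(\dt-\Delta\right)\nabla^\ell A\rangle-2|\nabla^{\ell+1}A|^2-(\dt\gind)(\nabla^\ell A,\nabla^\ell A),
\end{equation*}
where the extra $-2|\nabla^{\ell+1}A|^2$ comes from expanding the Laplacian of the squared norm. The metric-derivative term $-(\dt\gind)(\nabla^\ell A,\nabla^\ell A)=2\,A^\H(\nabla^\ell A,\nabla^\ell A)$ is bounded by $c(m)|A||\H||\nabla^\ell A|^2$, which again fits into the target sum (with indices $i=0$, $j=0$, $k=\ell$ up to trading $\H$ for $A$). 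Plugging the schematic evolution equation into the inner product and using $|\nabla^i A*\nabla^j A*\nabla^k A|\le c(m)|\nabla^i A||\nabla^j A||\nabla^k A|$ produces exactly the right hand side of the claimed inequality with some constant $C(\ell,m)$.

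The main bookkeeping obstacle is keeping track of the commutator terms arising from both the time-dependence of $\gind$ and of the induced Levi-Civita and normal connections; the combinatorics, however, only affects the constant $C(\ell,m)$, and the structural form $\sum_{i+j+k=\ell}$ is forced by counting the total number of covariant derivatives of $A$ appearing in each term. Once the schematic identity for $(\dt-\Delta)\nabla^\ell A$ is established, the rest is the standard Kato-type manipulation used in Huisken's proof.
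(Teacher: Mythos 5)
Your proof is correct and follows essentially the same scheme the paper invokes by reference to Huisken's Theorem~7.1: derive the schematic evolution equation $(\dt-\Delta)\nabla^\ell A=\sum_{i+j+k=\ell}\nabla^i A * \nabla^j A * \nabla^k A$ by commuting $\dt$ and $\nabla$ (all commutators being quadratic in $A$ and its derivatives via Gauss, Ricci and the variation of the Christoffel symbols), then take the inner product with $\nabla^\ell A$ and absorb the metric-variation terms into the cubic sum. One small note: your aside that $\nabla^\perp A$ and $\nabla A$ ``differ by contractions of $A$ with $A$ via the Weingarten relations'' is unnecessary and slightly misleading --- with the paper's conventions $\nabla A$ is the full pullback-connection derivative of the normal-bundle-valued tensor $A$, which is precisely $\nabla^\perp A$, so the two norms are literally equal and the base case needs no absorption at all.
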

Using the above lemma, we can easily derive a priori $C^k$-estimates in terms of $C^2$-estimates.
\begin{proposition}\label{longtime}
	Let $\M_t$, $t\in[0,T)$,  be a smooth solution of \eqref{mcf} with bounded curvature on each $\M_t$, and suppose $\M_0$ is the graph of a smooth map $\f:\real{m}\to\real{n}$ such that $\sup_{\M_0}\vert\D\f\vert$, $\sup_{\M_0}\vert\nabla^{\ell} A\vert$ are bounded for all $\ell\ge 0$. Then for $t\ge 0$ the submanifold $\M_t$ is a graph of a $C^1$-bounded map, and for all $t_0<T$, $\ell\ge 1$ we have the a priori estimate
	$$ {\sup}_{\M_t}\vert\nabla^\ell A\vert\le C(l,t_0),$$
	where $C(l,t_0)$ depend only on $\ell$, $m$, $\sup_{\M^m\times[0,t_0]}\vert A\vert$ and $\sup_{\M_0}\vert\nabla^j A\vert$, $1\le j\le \ell$. In particular $\vert\nabla^\ell A\vert$ becomes uniformly bounded in time once this holds for $\vert A\vert$.
\end{proposition}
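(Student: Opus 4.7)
The argument splits into two parts: preservation of graphicality together with the $C^1$-bound, and an inductive chain of a priori bounds on $\vert\nabla^\ell A\vert$ for $\ell\ge 1$.

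For the graphical property, one controls an angle-type quantity on $\M_t$, for instance the Jacobian $v$ of the vertical projection $\pi:\real{m+n}\to\real{m}$ restricted to $T\M_t$. Initially $v>0$ and $v^{-1}$ is uniformly bounded thanks to the assumption $\sup_{\M_0}\vert\D\f\vert<\infty$. A standard computation, analogous to the one carried out in \cite{EH} for the angle function in codimension one, shows that a suitable power of $v$ satisfies an evolution inequality of the form $(\dt-\Delta)v\ge -c\vert A\vert^2\,v$. Since $\vert A\vert$ is uniformly bounded on $\M^m\times[0,t_0]$ by hypothesis, Proposition \ref{prop 2} applied to $-v$ yields a uniform positive lower bound for $v$, hence a uniform upper bound on $\vert\D\f\vert$. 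Therefore $\M_t$ is a graph of a $C^1$-bounded map for all $t\in[0,t_0]$.

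The higher-order estimates are proved by induction on $\ell\ge 1$. For $\ell=1$ the sum in Lemma \ref{lemma 2.4} reduces to $3\vert A\vert^2\vert\nabla A\vert^2$ (the three partitions of $1$ each place the single derivative on one factor), so
$$\left(\dt-\Delta\right)\vert\nabla A\vert^2 \le 3\,C(1,m)\,\sup_{\M^m\times[0,t_0]}\!\vert A\vert^2\cdot\vert\nabla A\vert^2,$$
and Proposition \ref{prop 2} with $\a=0$ delivers an exponential-in-time bound depending only on $m$, $\sup\vert A\vert$ and $\sup_{\M_0}\vert\nabla A\vert$. For the inductive step, assume $\sup_{\M^m\times[0,t_0]}\vert\nabla^j A\vert\le K_j$ for $0\le j\le\ell-1$. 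In the sum over $i+j+k=\ell$ one separates the three terms where one of the indices equals $\ell$ (each of the form $\vert A\vert^2\vert\nabla^\ell A\vert^2$) from the remaining ones in which all three indices are strictly less than $\ell$; the latter are bounded by a constant $C(\ell,m,K_1,\ldots,K_{\ell-1},\sup\vert A\vert)$ times $\vert\nabla^\ell A\vert$. Absorbing the linear term through $\vert\nabla^\ell A\vert\le\tfrac12(1+\vert\nabla^\ell A\vert^2)$ yields
$$\left(\dt-\Delta\right)\vert\nabla^\ell A\vert^2 \le C_1\vert\nabla^\ell A\vert^2+C_2,$$
with $C_1,C_2$ depending only on $\ell,m,\sup\vert A\vert$ and $K_1,\ldots,K_{\ell-1}$.

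To put this into the framework of Proposition \ref{prop 2}, set $u_\ell:=\vert\nabla^\ell A\vert^2+C_2/C_1$ (the case $C_1=0$ is trivial by direct integration). Then $(\dt-\Delta)u_\ell\le C_1\,u_\ell$, and Proposition \ref{prop 2} yields $\sup_{\M_t}u_\ell\le e^{C_1 t}\sup_{\M_0}u_\ell$ on $[0,t_0]$, which is the desired estimate expressed in terms of $\ell$, $m$, $\sup_{\M^m\times[0,t_0]}\vert A\vert$ and $\sup_{\M_0}\vert\nabla^j A\vert$, $1\le j\le\ell$; this closes the induction. The subtlest point is really the first part: identifying a graphicality function in higher codimension whose evolution is controllable solely by $\vert A\vert$ and to which the maximum principle of Proposition \ref{prop 2} applies globally in space. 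Once graphicality is secured, the $C^k$-bounds are a clean bootstrap through Lemma \ref{lemma 2.4} and Proposition \ref{prop 2}.
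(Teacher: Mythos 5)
The higher-order part of your proposal is fine and essentially matches the paper: both rely on the evolution inequality of Lemma \ref{lemma 2.4} and close an induction via a maximum principle (the paper simply cites the argument of \textsc{Huisken}'s Proposition 2.3 in \cite{Huisken90}). Your reformulation through Proposition \ref{prop 2} with the auxiliary quantity $u_\ell:=\vert\nabla^\ell A\vert^2+C_2/C_1$ is a perfectly serviceable way to organize it.

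The graphicality part, however, has a genuine gap, and it is not a minor one. You appeal to an evolution inequality $(\dt-\Delta)v\ge -c\,\vert A\vert^2\,v$ for the Jacobian $v=\ast\Omega$ of the projection $\pi_{\x}$, with $c$ a fixed constant, claiming this is ``analogous'' to the codimension-one computation in \cite{EH}. But the analogy breaks down precisely at the point that matters. In codimension one, $(\dt-\Delta)\langle\nu,w\rangle=\vert A\vert^2\langle\nu,w\rangle$ has a coefficient that is sign-definite and proportional to the quantity itself. In higher codimension, the evolution of $\ast\Omega$ reads schematically $(\dt-\Delta)\ast\Omega=\vert A\vert^2\ast\Omega + \text{(mixed terms)}$, where the mixed terms involve the $m$-form $\Omega$ evaluated on frames with some tangent and some normal slots. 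Written in the singular-value frame of Section~\ref{sec4} (for, say, $m=n=2$) one finds $\Omega(e_1,\upeta)=-\mu\ast\Omega$, $\Omega(\xi,e_2)=-\lambda\ast\Omega$, $\Omega(\xi,\upeta)=\lambda\mu\ast\Omega$. Thus the mixed coefficients are indeed proportional to $\ast\Omega$, but the proportionality factor is $\lambda$, $\mu$, $\lambda\mu$, i.e.\ exactly the slope one is trying to control. The best universal bound is therefore $(\dt-\Delta)\ast\Omega\ge -c(\sup_{\M_t}\vert\D\f\vert)\,\vert A\vert^2\,\ast\Omega$, and feeding this into Proposition~\ref{prop 2} is circular: the rate of decay of the lower bound on $\ast\Omega$ depends on the very $C^1$-norm you want to conclude is finite. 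Without an additional structural assumption (e.g.\ a two-positivity/area-decreasing hypothesis, which is not available here since Proposition~\ref{longtime} is stated for arbitrary codimension and initial data), the Jacobian argument does not close on $[0,t_0]$.

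The paper instead works directly with $\vert\D\f\vert^2$ and the evolution equation of $\F$, deriving $(\dt-\Delta)\vert\D\f\vert^2\le C\vert\D\f\vert^2$ with $C$ depending on $\sup_{\M_0}\vert\D\f\vert$ and $\sup_{\M^m\times[0,t_0]}\vert A\vert$, then applies Proposition~\ref{prop 1} to conclude at-most-exponential growth. This is a different quantity from your $\ast\Omega$, and the fact that the constant is pinned to the \emph{initial} slope is what allows the conclusion to hold on the whole of $[0,t_0]$. If you want to repair your version, you would need either to justify the proportionality constant in the $\ast\Omega$ evolution independently of the current slope (which fails, as above), or to run a continuity/open-closed argument in tandem with the maximum principle and make quantitative that the implied time intervals do not collapse — which is precisely the extra work that the angle-function shortcut from codimension one does not provide in higher codimension.
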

\begin{proof}
	Using Lemma \ref{lemma 2.4} one may proceed exactly as in \cite[Proposition 2.3]{Huisken90} to obtain the a priori estimates for $\vert\nabla^\ell A\vert$. To show that $\M_t$ is a graph of a $C^1$-bounded map one proceeds as follows. From the evolution equation of $\F$ one concludes that $\D\f$ satisfies on each finite time interval $[0,t_0]$, $t_0<T$, an estimate
	$$\left(\dt-\Delta\right)\vert\D\f\vert^2\le C\vert\D\f\vert^2,$$
	where $C$ depends only on $\sup_{\M_0}\vert\D\f\vert$, $\sup_{\M^m\times[0,t_0]}\vert A\vert$. Then Proposition \ref{prop 1} implies that $\sup_{\M_t}\vert\D\f\vert$ can grow at most exponentially in time on $[0,t_0]$.
\end{proof}
\begin{remark}
	Proposition \ref{longtime} implies that longtime existence of the graphical mean curvature flow follows from uniform $C^2$-estimates. Note that uniform estimates of $\vert\D\f\vert$ are usually needed to derive such uniform $C^2$-estimates, but that uniform $C^1$-estimates do not follow from Proposition \ref{longtime} if $\vert A\vert $ stays uniformly bounded in space and time. In addition, in contrast to the compact case, bounded curvature on each $\M_t$ is not guaranteed for all times $t\in[0,T)$. 
\end{remark}

\section{Growth rates in arbitrary codimension}\label{sec3}
In this section we derive estimates on growth rates that do not even require $\M_t$ to be a graph. To express growth rates of an immersion $\F:\real{m}\to\real{m+n}$ let us consider the natural projections
$$\pi_{\x}:\real{m}\times\real{n}\to\real{m},\quad \pi_{\y}:\real{m}\times\real{n}\to\real{n},$$ 
and define
$$\x:=\pi_{\x}\circ\F,\quad \f:=\pi_{\y}\circ \F.$$
Since
$$\Delta\F=\H$$
where $\Delta$ is the \textsc{Laplace-Beltrami} operator on $\M^m$, one can rewrite \eqref{mcf} as the heat equation (where $\Delta$ is depending on $\F$), i.e.
\begin{equation}\label{mcf 1}
\left(\dt-\Delta\right) \F=0,\quad 
\left(\dt-\Delta\right) \x=0,\quad \left(\dt-\Delta\right)\f=0.
\end{equation}
Similarly as in \cite{EH} we define
\begin{equation}\label{defeta}
\upeta:=\vert\x\vert^2+2mt+\uplambda(t),
\end{equation}
where $\uplambda$ will be chosen later.
\begin{lemma}\label{lemma 3.1}
	Let $\varmine:\real{}\to\real{}$ be an arbitrary smooth function. Then  $\varmine=\varmine(\upeta)$ satisfies 
	$$
	\left(\dt-\Delta\right)\varmine=\bigl(2\vert\nabla\f\vert^2+\bdot\uplambda\bigr)\varmine'-\vert\nabla\upeta\vert^2\varmine''.
	$$
\end{lemma}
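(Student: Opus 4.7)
The plan is to reduce the identity to the chain rule once we know $(\partial_t-\Delta)\upeta$ explicitly, so the main work is computing $(\partial_t-\Delta)|\x|^2$.

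First I would use the splitting $\F=(\x,\f)$ together with the heat equations $(\partial_t-\Delta)\x=0=(\partial_t-\Delta)\f$ from \eqref{mcf 1}. Since $|\x|^2=\sum_\alpha (x^\alpha)^2$ is a sum of squares of scalar functions, the product rule gives
\begin{equation*}
\left(\dt-\Delta\right)|\x|^2=2\langle\x,(\partial_t-\Delta)\x\rangle-2|\nabla\x|^2=-2|\nabla\x|^2.
\end{equation*}

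Next I would identify $|\nabla\x|^2$. Because $\F$ is an isometric immersion of $(\M^m,\gind)$ into $\real{m+n}$, in any local orthonormal frame $\{e_i\}$ on $\M^m$ the vectors $d\F(e_i)$ are orthonormal in $\real{m+n}$, so $|d\F|^2=m$. The orthogonal decomposition $\real{m+n}=\real{m}\oplus\real{n}$ yields $|d\F(e_i)|^2=|d\x(e_i)|^2+|d\f(e_i)|^2$, and summing over $i$ gives
\begin{equation*}
|\nabla\x|^2+|\nabla\f|^2=m,\qquad\text{hence}\qquad |\nabla\x|^2=m-|\nabla\f|^2.
\end{equation*}
Plugging this into the previous line and adding the time derivatives of $2mt$ and $\uplambda(t)$ produces
\begin{equation*}
\left(\dt-\Delta\right)\upeta=-2m+2|\nabla\f|^2+2m+\bdot\uplambda=2|\nabla\f|^2+\bdot\uplambda.
\end{equation*}

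Finally, since $\varmine=\varmine(\upeta)$ is a composition, the chain rule gives $\nabla\varmine=\varmine'(\upeta)\nabla\upeta$ and
\begin{equation*}
\Delta\varmine=\varmine'(\upeta)\Delta\upeta+\varmine''(\upeta)|\nabla\upeta|^2,\qquad \partial_t\varmine=\varmine'(\upeta)\partial_t\upeta.
\end{equation*}
Subtracting and inserting the expression for $(\partial_t-\Delta)\upeta$ derived above yields
\begin{equation*}
\left(\dt-\Delta\right)\varmine=\varmine'(\upeta)\bigl(2|\nabla\f|^2+\bdot\uplambda\bigr)-\varmine''(\upeta)|\nabla\upeta|^2,
\end{equation*}
which is exactly the claimed identity. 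There is no real obstacle here; the only point that needs a line of justification is the Pythagorean identity $|\nabla\x|^2+|\nabla\f|^2=m$, which rests on $\F$ being an isometric immersion with respect to the induced metric $\gind$.
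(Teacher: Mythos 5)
Your proof is correct and follows essentially the same route as the paper: the paper rewrites $|\x|^2=|\F|^2-|\f|^2$ and uses $|\nabla\F|^2=m$ together with the heat equations \eqref{mcf 1}, whereas you compute $(\partial_t-\Delta)|\x|^2$ directly and invoke the equivalent Pythagorean identity $|\nabla\x|^2+|\nabla\f|^2=m$; the two are the same calculation written in slightly different form. The concluding chain-rule step matches the paper exactly.
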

\begin{proof}
Since
$$\upeta=\vert\F\vert^2-\vert\f\vert^2+2mt+\uplambda\quad\text{and}\quad
\vert\nabla\F\vert^2=m,
$$
we may use \eqref{mcf 1} to conclude the equation for $\upeta$. The evolution equation for $\varmine$ then follows from the chain rule.
\end{proof}
We are now ready to prove that polynomial growth rates are preserved.
\begin{proposition}\label{prop 3}
	Let $\varmine:[0,\infty)\to\real{}$ be a smooth function such that
	\begin{equation}
	\varmine>0,\quad \varmine'\ge 0,\quad
	({s^{-k}\varmine'})'\le 0
	,\,\text{  for    } s>0 \text{  and some } k\ge -m/2.\nonumber
	\end{equation}
	If the inequality
	$$\vert\f\vert^2\le \varmine(\vert\x\vert^2)$$
	is satisfied on $\M_0$, then for all $t\ge 0$, it holds
	\begin{equation}
	\vert\f\vert^2\le \varmine(\vert\x\vert^2+2(m+2k)t).\nonumber
	\end{equation}
\end{proposition}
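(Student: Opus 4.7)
The plan is to introduce the auxiliary function
\[
U(p,t):=\vert\f(p,t)\vert^2-\varmine\bigl(\upeta(p,t)\bigr),
\]
where $\upeta:=\vert\x\vert^2+2mt+\uplambda(t)$ with the specific choice $\uplambda(t):=4kt$, so that $\upeta=\vert\x\vert^2+2(m+2k)t$ and the claim becomes $U\le 0$ on $\M_t$ for every $t\ge 0$. The initial hypothesis says exactly that $U(\cdot,0)\le 0$ on $\M_0$, and I would propagate this property under the flow by means of Proposition~\ref{prop 1}.

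I would first derive the evolution inequality for $U$. From $(\dt-\Delta)\f=0$ one gets $(\dt-\Delta)\vert\f\vert^2=-2\vert\nabla\f\vert^2$, while Lemma~\ref{lemma 3.1} applied with $\bdot\uplambda=4k$ gives $(\dt-\Delta)\varmine(\upeta)=(2\vert\nabla\f\vert^2+4k)\varmine'(\upeta)-\vert\nabla\upeta\vert^2\varmine''(\upeta)$. Subtracting,
\[
(\dt-\Delta)U=-2\vert\nabla\f\vert^2\bigl(1+\varmine'(\upeta)\bigr)-4k\,\varmine'(\upeta)+\vert\nabla\upeta\vert^2\varmine''(\upeta).
\]
The key geometric input is the gradient bound $\vert\nabla\upeta\vert^2\le 4\upeta$: writing $\nabla\upeta=2\sum_i x_i\nabla x_i$ identifies $\nabla\upeta$ with twice the tangential projection to $T\M$ of the ambient vector $(\x,0)\in\real{m+n}$, so $\vert\nabla\upeta\vert^2=4\vert(\x,0)^T\vert^2\le 4\vert\x\vert^2$, and the hypothesis $k\ge -m/2$ ensures $\upeta-\vert\x\vert^2=2(m+2k)t\ge 0$. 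The hypothesis $(s^{-k}\varmine')'\le 0$ rewrites as $s\,\varmine''(s)\le k\,\varmine'(s)$; multiplying by $\vert\nabla\upeta\vert^2/\upeta\ge 0$ and using $\vert\nabla\upeta\vert^2/\upeta\le 4$ together with $\varmine'\ge 0$ yields $\vert\nabla\upeta\vert^2\varmine''(\upeta)\le 4k\,\varmine'(\upeta)$ in the case $k\ge 0$. Plugging back cancels the $-4k\,\varmine'$ contribution and produces the clean estimate $(\dt-\Delta)U\le -2\vert\nabla\f\vert^2\bigl(1+\varmine'(\upeta)\bigr)\le 0$.

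With $(\dt-\Delta)U\le 0$ in hand, Proposition~\ref{prop 1} applied with the vanishing vector field $\a=0$ gives $\sup_{\M_t}U\le\sup_{\M_0}U\le 0$ for every $t\ge 0$, which is the asserted polynomial growth preservation. I expect the main obstacle to be the regime $-m/2\le k<0$: here $\varmine''\le 0$, so the naive bound $\vert\nabla\upeta\vert^2\varmine''\le 0$ is too weak to cancel the now-positive contribution $-4k\,\varmine'$. To close this case I would exploit the sharper identity $4\upeta-\vert\nabla\upeta\vert^2=4\vert(\x,0)^\perp\vert^2+8(m+2k)t$, arising from the orthogonal splitting of $(\x,0)$ relative to $T\M$, together with the structural relation $\vert\nabla\x\vert^2+\vert\nabla\f\vert^2=m$ coming from $\vert\nabla\F\vert^2=m$, in order to absorb the positive term into part of the negative $-2\vert\nabla\f\vert^2\bigl(1+\varmine'(\upeta)\bigr)$ contribution and recover $(\dt-\Delta)U\le 0$.
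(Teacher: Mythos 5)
Your approach via the difference $U:=\vert\f\vert^2-\varmine(\upeta)$ is genuinely different from the paper's, and for $k\ge 0$ it is cleaner. The paper works with the quotient $\vert\f\vert^2/\varmine(\upeta)$; this produces a cross term $2\varmine'\varmine^{-2}\langle\nabla\vert\f\vert^2,\nabla\upeta\rangle$ that has to be absorbed with a Peter--Paul inequality and a carefully tuned parameter $a=\varmine/\varmine'$ (with a separate discussion at points where $\varmine'=0$). Your difference form skips all of this: subtracting the heat equation for $\vert\f\vert^2$ from Lemma~\ref{lemma 3.1} directly yields $(\dt-\Delta)U=-2\vert\nabla\f\vert^2(1+\varmine')-4k\varmine'+\vert\nabla\upeta\vert^2\varmine''$, and the chain $\upeta\varmine''\le k\varmine'$, $\vert\nabla\upeta\vert^2/\upeta\le 4$, $k\varmine'\ge 0$ gives $\vert\nabla\upeta\vert^2\varmine''\le 4k\varmine'$ whenever $k\ge 0$, irrespective of the sign of $\varmine''$. (The paper passes through the intermediate bound $\varmine''\vert\nabla\upeta\vert^2\le 4\varmine''\upeta$, which tacitly uses $\varmine''\ge 0$; your reformulation is the more robust one.) Invoking Proposition~\ref{prop 1} with $\a=0$ then closes the argument exactly as you describe.

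The obstacle you flag for $-m/2\le k<0$ is in fact vacuous, and your proposed fix is never needed. Set $h(s):=s^{-k}\varmine'(s)=s^{\vert k\vert}\varmine'(s)$. By hypothesis $h\ge 0$ and $h$ is non-increasing on $(0,\infty)$; since $\varmine$ is smooth on $[0,\infty)$ and $\vert k\vert>0$, we have $h(s)\to 0$ as $s\to 0^+$. A non-negative, non-increasing function on $(0,\infty)$ whose limit at the left endpoint is $0$ must vanish identically, so $\varmine'\equiv 0$ and $\varmine$ is constant. In that case your evolution inequality collapses to $(\dt-\Delta)U=-2\vert\nabla\f\vert^2\le 0$ with nothing left to estimate. (Independently of this, the fix you sketch would require a \emph{lower} bound on $\vert\nabla\upeta\vert^2$ to dominate the positive term $-4k\varmine'$, whereas the identity $4\upeta-\vert\nabla\upeta\vert^2=4\vert(\x,0)^\perp\vert^2+8(m+2k)t$ only gives an upper bound, so it would not close the gap in any event.) With this remark added, your proof is complete.
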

\begin{proof}
It suffices to prove that, for any $t\ge 0$, it holds
$$\sup_{\M_t}\frac{\vert\f\vert^2}{\varmine(\upeta)}\le 1,\text{ where}\quad \upeta=\vert\x\vert^2+2mt+4kt.$$
So we may employ Lemma \ref{lemma 3.1} with $\uplambda(t):=4kt$, and compute
	\begin{eqnarray}
	\left(\dt-\Delta\right) \frac{\vert\f\vert^2}{\varmine}&=&\frac{1}{\varmine}\left(\dt-\Delta\right)\vert\f\vert^2-\frac{\vert\f\vert^2}{\varmine^2}\left(\dt-\Delta\right)\varmine\nonumber\\
	&&-2\frac{\vert\f\vert^2}{\varmine^3}\vert\nabla\varmine\vert^2+\frac{2}{\varmine^2}\left\langle\nabla\vert\f\vert^2,\nabla\varmine\right\rangle\nonumber\\
	&=&-\,\frac{2}{\varmine}\vert\nabla\f\vert^2-2\frac{\varmine'}{\varmine^2}\vert\f\vert^2\vert\nabla\f\vert^2-4k\frac{\varmine'}{\varmine^2}\vert\f\vert^2\nonumber\\
	&&+\frac{\varmine''}{\varmine^2}\vert\f\vert^2\vert\nabla\upeta\vert^2-2\frac{(\varmine')^2}{\varmine^3}\vert\f\vert^2\vert\nabla\upeta\vert^2+2\frac{\varmine'}{\varmine^2}\left\langle\nabla\vert\f\vert^2,\nabla\upeta\right\rangle\nonumber.
	\end{eqnarray}
	Hence
	\begin{eqnarray}\label{prelim 1}
	\left(\dt-\Delta\right) \frac{\vert\f\vert^2}{\varmine}&=&-\,\frac{2}{\varmine}\Bigl(1+\frac{\varmine'}{\varmine}\vert\f\vert^2\Bigr)\vert\nabla\f\vert^2+\Bigl(\frac{\varmine''}{\varmine^2}-2\frac{(\varmine')^2}{\varmine^3}\Bigr)\vert\f\vert^2\vert\nabla\upeta\vert^2\\
	&&-4k\frac{\varmine'}{\varmine^2}\vert\f\vert^2+2\frac{\varmine'}{\varmine^2}\left\langle\nabla\vert\f\vert^2,\nabla\upeta\right\rangle\nonumber.
	\end{eqnarray}
	Denoting by $\{v_i\}_{i=1,\dots,m+n}$ the standard orthonormal basis for $\real{m+n}$, we have
	$$\f=\sum_{\alpha>m}\langle\F,v_\alpha\rangle v_\alpha,$$
	so that with $\f^\alpha:=\langle\F,v_\alpha\rangle$ we get
	$$\vert \f\vert^2=\sum_{\alpha>m}(\f^\alpha)^2,\quad\vert\nabla\f\vert^2=\sum_{\alpha>m}\vert\nabla \f^\alpha\vert^2.$$
Using the \textsc{Peter-Paul} inequality, we obtain for any $a>0$
	\begin{eqnarray}
	\vert \langle\nabla\vert\f\vert^2,\nabla\upeta\rangle\vert&=&2\Bigl\vert \sum_{\alpha>m}\f^\alpha\langle\nabla\f^\alpha,\nabla\upeta\rangle\Bigr\vert\nonumber\\
	\phantom{\left(\dt-\Delta\right)}&\le&\sum\limits_{\alpha> m}\left(a\vert\nabla\f^\alpha\vert^2+a^{-1}(\f^\alpha)^2\vert\nabla\upeta\vert^2\right)\nonumber\\
	\phantom{\left(\dt-\Delta\right)}&=&a\vert\nabla\f\vert^2+a^{-1}\vert\f\vert^2\vert\nabla\upeta\vert^2.\nonumber
	\end{eqnarray}
	Combining this with \eqref{prelim 1}  gives
	\begin{eqnarray}
	\left(\dt-\Delta\right) \frac{\vert\f\vert^2}{\varmine}&\le&-\,\frac{2}{\varmine}\Bigl(1+\frac{\varmine'}{\varmine}\vert\f\vert^2-a\frac{\varmine'}{\varmine}\Bigr)\vert\nabla\f\vert^2\nonumber\\
	&&+\Bigl(\frac{\varmine''}{\varmine^2}-2\frac{(\varmine')^2}{\varmine^3}+2\frac{\varmine'}{\varmine^2}a^{-1}\Bigr)\vert\f\vert^2\vert\nabla\upeta\vert^2-4k\frac{\varmine'}{\varmine^2}\vert\f\vert^2.\nonumber
	\end{eqnarray}
	The conditions for $\varmine$ imply $\varmine''\upeta\le k\varmine'$. Therefore, at points where $\varmine'=0$, the above estimate implies
	$$\left(\dt-\Delta\right) \vert\f\vert^2\varmine^{-1}\le0.$$
	At all other points we choose $a:={\varmine}/{\varmine'}$ to derive 
	\begin{eqnarray}
	\left(\dt-\Delta\right)\frac{\vert\f\vert^2}{\varmine}&\le&\left(\varmine''\vert\nabla\upeta\vert^2-4k\varmine'\right)\frac{\vert\f\vert^2}{\varmine^2}.\nonumber
	\end{eqnarray}
	Since $\nabla\upeta=2\x^\top$, where $\x^\top$ denotes the tangent part of $\x$, and $k\ge -m/2$ we obtain
	\begin{equation}\nonumber
	\vert\nabla\upeta\vert^2\le4\vert \x\vert^2=4(\upeta-2(m+2k)t)\le 4\upeta.
	\end{equation}
	We conclude
	\begin{eqnarray}
	\left(\dt-\Delta\right)\frac{\vert\f\vert^2}{\varmine}&\le&4\left(\varmine''\upeta-k\varmine'\right)\frac{\vert\f\vert^2}{\varmine^2}\le 0.\nonumber
	\end{eqnarray}
	The result then follows from Proposition \ref{prop 1}.
\end{proof}
\begin{example}
	Consider a polynomial
	$$\varmine(s):=\sum_{\ell=0}^ka_\ell s^\ell$$ 
	of degree $k\ge 0$, and assume that all coefficients are non-negative and $a_0>0$. 
	Then $\varmine >0$, $\varmine'\ge 0$ and $(s^{-k}\varmine')'\le 0$. This means that any polynomial growth rate of $\vert\f\vert^2$ will be preserved.
\end{example}
In \cite[Lemma 5.4]{EH} it was shown that a hypersurface in $\real{m+1}$ that is asymptotic to a cone will stay asymptotic to a cone under the mean curvature flow, provided each $\M_t$ has bounded curvature. We will now derive an analogue result in arbitrary codimension.
\begin{definition}
	Let $\M^m\subset\real{m+n}$ be a complete submanifold. We say that $\M^m$ is asymptotically conical at infinity, if there exist constants $\delta, c>0$ such that
	\begin{equation}
	\vert \F^\perp\vert^2\le c(1+\vert \F\vert^2)^{1-\delta}.\label{cond cone}
	\end{equation}
\end{definition}
Hence, for any sequence $\{x_k\}_{k\in\natural{}}\subset \M^m$ with $\lim_{k\to\infty}\vert\F(x_k)\vert=\infty$,  \eqref{cond cone} implies
$$\lim_{k\to\infty}\frac{\vert\F^\perp(x_k)\vert}{\vert\F(x_k)\vert}=0.$$
This means that the angle between $\F$ and its tangential part $\F^\top$ tends to zero.
\begin{lemma}\label{lemma 3.5}
	The following evolution equations for the normal part $\F^\perp$ of $\,\F$ hold.
	\begin{eqnarray}
	\Bigl(\nabla^\perp_{\dt}-\Delta^\perp\Bigr)\F^\perp&=&2\H+\langle A^{\F^\perp}\!,A\rangle,\nonumber\\
	\left(\dt-\Delta\right)\vert\F^\perp\vert^2&=&-2\vert\nabla^\perp\F^\perp\vert^2+4\langle \F^\perp,\H\rangle+2\vert A^{\F^\perp}\vert^2.\nonumber
	\end{eqnarray}
\end{lemma}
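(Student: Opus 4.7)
The plan is to derive both identities by combining the evolution equation $\dt\F=\H$ with a careful application of the Gauss--Weingarten--Codazzi apparatus to the position vector. First I would compute spatial derivatives of $\F^\perp$. Starting from the identity $\partial_X\F=X$ for every tangent vector $X$ (where $\partial$ denotes the ambient directional derivative in $\real{m+n}$) and decomposing into tangent and normal parts via Gauss and Weingarten, one finds
$$\nabla_X\F^\top=X+A_{\F^\perp}(X),\qquad \nabla^\perp_X\F^\perp=-A(X,\F^\top).$$
Iterating the second identity at a point where $\nabla_{e_i}e_j=0$ in a local orthonormal tangent frame and using the first identity to compute $\nabla_{e_i}\F^\top=e_i+A_{\F^\perp}(e_i)$, I obtain three contributions; the resulting sum $\sum_i(\nabla^\perp_{e_i}A)(e_i,\F^\top)$ collapses to $\nabla^\perp_{\F^\top}\H$ by the Codazzi identity $(\nabla^\perp_XA)(Y,Z)=(\nabla^\perp_YA)(X,Z)$ together with $\H=\sum_iA(e_i,e_i)$, and the contraction $\sum_iA(e_i,A_{\F^\perp}(e_i))$ is exactly $\langle A^{\F^\perp},A\rangle$. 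This gives
$$\Delta^\perp\F^\perp=-\H-\nabla^\perp_{\F^\top}\H-\langle A^{\F^\perp},A\rangle.$$

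Next I would handle the time derivative. Since $\F^\perp$ is a section of the time-dependent normal bundle, the natural definition $\nabla^\perp_{\dt}\F^\perp=(\dt\F^\perp)^\perp$ reduces to computing $(\dt\F^\top)^\perp$, because $(\dt\F)^\perp=\H$. Writing $\F^\top=\sum_j\langle\F,e_j\rangle e_j$ in an orthonormal tangent frame, only the normal component of $\dt e_j$ contributes to this sum. Differentiating $\partial_i\F$ in time and using $\dt\F=\H$ together with Weingarten yields $(\dt e_j)^\perp=\nabla^\perp_{e_j}\H$, hence $(\dt\F^\top)^\perp=\nabla^\perp_{\F^\top}\H$ and
$$\nabla^\perp_{\dt}\F^\perp=\H-\nabla^\perp_{\F^\top}\H.$$
Subtracting the expression for $\Delta^\perp\F^\perp$ cancels the $\nabla^\perp_{\F^\top}\H$ terms and produces the first claimed identity $(\nabla^\perp_{\dt}-\Delta^\perp)\F^\perp=2\H+\langle A^{\F^\perp},A\rangle$.

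Finally, the scalar equation for $|\F^\perp|^2$ follows by a standard product-rule calculation: one has $\dt|\F^\perp|^2=2\langle\F^\perp,\nabla^\perp_{\dt}\F^\perp\rangle$ and $\Delta|\F^\perp|^2=2\langle\F^\perp,\Delta^\perp\F^\perp\rangle+2|\nabla^\perp\F^\perp|^2$, so substituting the two vectorial identities just derived causes the $\nabla^\perp_{\F^\top}\H$ contributions to cancel pairwise, combines the two copies of $\langle\F^\perp,\H\rangle$ into $4\langle\F^\perp,\H\rangle$, and converts the contraction $\langle\F^\perp,\langle A^{\F^\perp},A\rangle\rangle$ into $|A^{\F^\perp}|^2$. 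The main technical delicacy is isolating the normal component $(\dt e_j)^\perp=\nabla^\perp_{e_j}\H$ of the evolving tangent frame, since the decomposition $\F=\F^\top+\F^\perp$ itself changes with time along the flow; once this correction is correctly identified, the remainder of the argument is a careful but routine application of the Gauss--Weingarten--Codazzi formalism.
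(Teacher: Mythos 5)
Your proof is correct and follows essentially the same route as the paper: both compute $\nabla^\perp_{\dt}\F^\perp=\H-\nabla^\perp_{\F^\top}\H$ and $\Delta^\perp\F^\perp=-\H-\langle A^{\F^\perp}\!,A\rangle-\nabla^\perp_{\F^\top}\H$ separately, so that the $\nabla^\perp_{\F^\top}\H$ terms cancel upon subtraction, and then derive the scalar identity by the standard product rule. The only difference is presentational: you work with the index-free identities $\nabla^\perp_X\F^\perp=-A(X,\F^\top)$ and an orthonormal frame, while the paper writes $\F^\perp=\F-\gind^{ij}\langle\F,\F_i\rangle\F_j$ and differentiates in local coordinates.
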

\begin{proof}We have,
	\begin{equation}\nonumber
	\nabla_{\dt}^\perp \F^\perp=\Bigl(\dt \F^\perp\Bigr)^\perp=\Bigl(\dt \Bigl(\F-\uu \gind ij\langle\F,\F_i\rangle \F_j\Bigr)\Bigr)^\perp=\H-\uu\gind ij\langle\F,\F_i\rangle\nabla_j^\perp \H=\H-\nabla_{\F^\top}^\perp\H.
	\end{equation}
	Similarly,
	\begin{equation}\nonumber
	\nabla_{k}^\perp \F^\perp=-\uu\gind ij\langle\F,\F_i\rangle A_{kj},
	\end{equation}
	and then with \textsc{Codazzi}'s equation
	\begin{eqnarray*}
		\Delta^\perp \F^\perp&=&-\uu\gind ij\bigl(\ud\delta ki+\langle\F,\ud Aki\rangle)A_{kj}-\uu\gind ij\langle\F,\F_i\rangle\nabla_j^\perp \H\\
		&=&-\H-\langle A^{\F^\perp}\!,A\rangle-\nabla^\perp_{\F^\top}\H.
	\end{eqnarray*}
	Combining this with the equation above proves the lemma.
\end{proof}

We want to show that \eqref{cond cone} is preserved under the mean curvature flow, if we allow the constant $c$ to vary in time. To achieve this we need the next lemma.
\begin{lemma}\label{lemma 3.6}
	Let $\uprho:=1+\vert\F\vert^2+2mt$. Then for any $\delta>0$ we have
	\begin{eqnarray}
	\left(\dt-\Delta\right)\uprho^{\delta-1}\vert\F^\perp\vert^2&\le&4(\vert A\vert^2+\delta^2)\uprho^{\delta-1}\vert\F^\perp\vert^2+2m\uprho^{\delta-1}.\nonumber
	\end{eqnarray}
\end{lemma}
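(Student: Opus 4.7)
The plan is to compute $\left(\dt-\Delta\right)(\uprho^{\delta-1}\vert\F^\perp\vert^2)$ via the product and chain rules, combining the evolution equation for $\vert\F^\perp\vert^2$ from Lemma \ref{lemma 3.5} with the key observation that $\uprho$ itself is almost heat-caloric under the flow. First I would note that $\dt\F=\H=\Delta\F$ together with $\vert\nabla\F\vert^2=m$ gives $\left(\dt-\Delta\right)\vert\F\vert^2=-2m$, so $\left(\dt-\Delta\right)\uprho=0$. The chain rule then yields
\begin{equation*}
\left(\dt-\Delta\right)\uprho^{\delta-1}=-(\delta-1)(\delta-2)\uprho^{\delta-3}\vert\nabla\uprho\vert^2,
\end{equation*}
and since $\nabla\uprho=2\F^\top$, one has $\vert\nabla\uprho\vert^2\le 4\vert\F\vert^2\le 4\uprho$.

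Next I would expand
\begin{equation*}
\left(\dt-\Delta\right)\bigl(\uprho^{\delta-1}\vert\F^\perp\vert^2\bigr)=\uprho^{\delta-1}\left(\dt-\Delta\right)\vert\F^\perp\vert^2+\vert\F^\perp\vert^2\left(\dt-\Delta\right)\uprho^{\delta-1}-2(\delta-1)\uprho^{\delta-2}\langle\nabla\uprho,\nabla\vert\F^\perp\vert^2\rangle,
\end{equation*}
plug in the formula from Lemma \ref{lemma 3.5}, and absorb the cross term by \textsc{Peter-Paul}. Using $\nabla\vert\F^\perp\vert^2=2\langle\F^\perp,\nabla^\perp\F^\perp\rangle$ and \textsc{Cauchy-Schwarz}, the cross term is bounded by
\begin{equation*}
4|\delta-1|\,\uprho^{\delta-2}\vert\F^\perp\vert\vert\nabla\uprho\vert\vert\nabla^\perp\F^\perp\vert\le 2\uprho^{\delta-1}\vert\nabla^\perp\F^\perp\vert^2+2(\delta-1)^2\uprho^{\delta-3}\vert\F^\perp\vert^2\vert\nabla\uprho\vert^2.
\end{equation*}
The first piece cancels the good term $-2\uprho^{\delta-1}\vert\nabla^\perp\F^\perp\vert^2$ from Lemma \ref{lemma 3.5}, and the $\vert\nabla\uprho\vert^2$-contributions consolidate to $\delta(\delta-1)\uprho^{\delta-3}\vert\F^\perp\vert^2\vert\nabla\uprho\vert^2$. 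Since $\delta>0$, this last quantity is either non-positive (when $0<\delta<1$) or bounded by $4\delta(\delta-1)\uprho^{\delta-2}\vert\F^\perp\vert^2\le 4\delta^2\uprho^{\delta-1}\vert\F^\perp\vert^2$ (when $\delta\ge 1$), where one uses $\uprho\ge 1$.

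Finally I would dispose of the two remaining terms $\uprho^{\delta-1}\bigl(4\langle\F^\perp,\H\rangle+2\vert A^{\F^\perp}\vert^2\bigr)$ from Lemma \ref{lemma 3.5}. The trivial pointwise estimate gives $2\vert A^{\F^\perp}\vert^2\le 2\vert A\vert^2\vert\F^\perp\vert^2$, while for the mixed term a weighted \textsc{Cauchy} estimate yields
\begin{equation*}
4\langle\F^\perp,\H\rangle\le 2\vert A\vert^2\vert\F^\perp\vert^2+\tfrac{2\vert\H\vert^2}{\vert A\vert^2}\le 2\vert A\vert^2\vert\F^\perp\vert^2+2m,
\end{equation*}
using $\vert\H\vert^2\le m\vert A\vert^2$ (the degenerate case $\vert A\vert=0$ forces $\H=0$ and the inequality is trivial). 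Summing the three contributions produces exactly the bound $4(\vert A\vert^2+\delta^2)\uprho^{\delta-1}\vert\F^\perp\vert^2+2m\uprho^{\delta-1}$. The main technical point is the \textsc{Peter-Paul} absorption of the cross term; everything else is chain rule and bookkeeping.
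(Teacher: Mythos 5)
Your proof is correct and follows essentially the same route as the paper: expand $(\partial_t-\Delta)(\uprho^{\delta-1}\vert\F^\perp\vert^2)$ by the product/chain rules using $(\partial_t-\Delta)\uprho=0$ and Lemma \ref{lemma 3.5}, absorb the cross term against the good gradient term $-2\uprho^{\delta-1}\vert\nabla^\perp\F^\perp\vert^2$ via Peter--Paul, consolidate the $\vert\nabla\uprho\vert^2$-coefficients to $\delta(\delta-1)\le\delta^2$ and use $\vert\nabla\uprho\vert^2\le 4\uprho$ together with $\uprho\ge 1$, and finally bound the curvature terms by Cauchy--Schwarz and $\vert\H\vert^2\le m\vert A\vert^2$. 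The only cosmetic difference is that the paper first derives the estimate for a general radial weight $\varmine(\uprho)$ and then specializes to $\uprho^{\delta-1}$ (using a Kato-type inequality in the Peter--Paul step), whereas you work directly with $\uprho^{\delta-1}$; the two are interchangeable.
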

\begin{proof}
	Let $\varmine>0$ be a function of $\uprho$ to be determined later. From 
	$$\left(\dt-\Delta\right)\uprho=0$$
	we derive
	\begin{eqnarray}
	\left(\dt-\Delta\right)\varmine\vert\F^\perp\vert^2&=&-\varmine''\vert\nabla\uprho\vert^2\vert\F^\perp\vert^2-2\varmine'\langle\nabla\uprho,\nabla\vert\F^\perp\vert^2\rangle\nonumber\\
	&&+2\varmine(\vert A^{\F^\perp}\vert^2+2\langle \F^\perp,\H\rangle -\vert\nabla^\perp\F^\perp\vert^2),\nonumber
	\end{eqnarray}
	and then
	\begin{eqnarray}
	-2\varmine'\langle\nabla\uprho,\nabla\vert\F^\perp\vert^2\rangle-2\varmine\vert\nabla^\perp\F^\perp\vert^2&\le&2\frac{(\varmine')^2}{\varmine}\vert\F^\perp\vert^2\vert\nabla\uprho\vert^2+2\varmine\bigl(\vert\nabla\vert\F^\perp\vert\vert^2-\vert\nabla^\perp\F^\perp\vert^2\bigr)\nonumber\\
	&\le&2\frac{(\varmine')^2}{\varmine}\vert\F^\perp\vert^2\vert\nabla\uprho\vert^2\nonumber
	\end{eqnarray}
	implies
	\begin{eqnarray}
	\left(\dt-\Delta\right)\varmine\vert\F^\perp\vert^2&\le&2\varmine(\vert A^{\F^\perp}\vert^2+2\langle \F^\perp,\H\rangle )+\left(2\Bigl(\frac{\varmine'}{\varmine}\Bigr)^2-\frac{\varmine''}{\varmine}\right)\vert\nabla\uprho\vert^2\varmine\vert\F^\perp\vert^2.\nonumber
	\end{eqnarray}
	For $\varmine(\uprho)=\uprho^{\delta-1}$, $\delta>0$, we get
	$$2\Bigl(\frac{\varmine'}{\varmine}\Bigr)^2-\frac{\varmine''}{\varmine}=\delta(\delta-1)\uprho^{-2}\le \delta^2\uprho^{-2}.$$
	Moreover, $\nabla\uprho=2\F^\top$ gives
	$$\vert\nabla\uprho\vert^2=4\vert\F^\top\vert^2\le 4\vert\F\vert^2\le 4\uprho^2,$$
	so that this together implies 
	\begin{eqnarray}
	\left(\dt-\Delta\right)\uprho^{\delta-1}\vert\F^\perp\vert^2&\le&2\uprho^{\delta-1}(\vert A^{\F^\perp}\vert^2+2\langle \F^\perp,\H\rangle )+4\delta^2\uprho^{\delta-1}\vert\F^\perp\vert^2.\nonumber
	\end{eqnarray}
	Finally, from the \textsc{Cauchy-Schwarz} inequality and from  $\vert\H\vert^2\le m\vert A\vert^2$ we conclude
	$$\vert A^{\F^\perp}\vert^2+2\langle \F^\perp,\H\rangle\le\vert A\vert^2\vert\F^\perp\vert^2+2\vert \H\vert \vert \F^\perp\vert\le 2\vert A\vert^2\vert\F^\perp\vert^2+m.$$
	This proves the proposition.
\end{proof}
\begin{proposition}
	Let $\M_t$ be a smooth solution of \eqref{mcf} with bounded curvature on each $\M_t$, and suppose $\M_0$ satisfies \eqref{cond cone} with $\delta\le 1$. Then for each $t$ there exists a constant $c(t)$ such that on $\M_t$ we have
	$$\vert \F^\perp\vert^2\le c(t)(1+\vert \F\vert^2)^{1-\delta}.$$
\end{proposition}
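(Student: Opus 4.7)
The plan is to control $u := \uprho^{\delta-1}\vert \F^\perp\vert^2$ where $\uprho := 1 + \vert \F\vert^2 + 2mt$, the very quantity whose evolution is already worked out in Lemma \ref{lemma 3.6}. Since $\delta \le 1$ and $\uprho \ge 1$, we have $\uprho^{\delta-1} \le 1$, so the inhomogeneous term in Lemma \ref{lemma 3.6} is bounded and the inequality there becomes
$$\left(\dt - \Delta\right) u \;\le\; 4(\vert A\vert^2 + \delta^2)\, u + 2m.$$

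I would then fix $t_1 \in (0, \tmax)$ and invoke the bounded-curvature hypothesis (uniformly on $\M^m \times [0,t_1]$, as is standard in this setting and as implicit in Proposition \ref{longtime}) to bound $\vert A\vert \le K$, so that on $[0,t_1]$ the above becomes $(\dt - \Delta)u \le bu + 2m$ with the constant $b := 4(K^2 + \delta^2)$. The small technical trick to eliminate the constant inhomogeneity is to pass to the shifted function $w := u + 2m/b$, which satisfies $w \ge 2m/b > 0$ and
$$\left(\dt - \Delta\right) w \;=\; \left(\dt - \Delta\right) u \;\le\; bu + 2m \;=\; bw.$$
Proposition \ref{prop 2}, applied with $\a \equiv 0$ and the constant coefficient $\b \equiv b$, then delivers $\sup_{\M_t} w \le e^{bt}\sup_{\M_0} w$.

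It remains to translate this bound back to the conical estimate. At $t = 0$ we have $\uprho = 1 + \vert\F\vert^2$, so the initial hypothesis forces $\sup_{\M_0} u \le c$, whence $\sup_{\M_t} u \le (c + 2m/b)\,e^{bt}$. On the other hand, $\uprho = 1 + \vert\F\vert^2 + 2mt \le (1 + 2mt)(1 + \vert\F\vert^2)$, and because $\delta - 1 \le 0$ this inequality reverses to $\uprho^{\delta-1} \ge (1+2mt)^{\delta-1}(1+\vert\F\vert^2)^{\delta-1}$. Combining the two gives
$$\vert\F^\perp\vert^2 \;\le\; (1+2mt)^{1-\delta}\big(c + 2m/b\big)e^{bt}\,\big(1+\vert\F\vert^2\big)^{1-\delta},$$
which is the desired estimate with $c(t) := (1+2mt)^{1-\delta}(c + 2m/b)e^{bt}$.

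The main obstacle I anticipate is the legitimate application of Proposition \ref{prop 2}: beyond the algebraic setup, one needs a uniform-in-space-and-time bound on the zero-order coefficient (this is precisely where bounded curvature on each $\M_t$ is used, converted to a uniform bound on each compact time strip) and the sub-exponential growth conditions under which the monotonicity-formula based proof of that maximum principle is valid. The latter are the paper's standing assumptions and the candidate $u = \uprho^{\delta-1}\vert\F^\perp\vert^2$ comfortably fits within that framework since $\vert\F^\perp\vert \le \vert\F\vert$ is controlled by the position vector. The remaining pieces are purely computational and follow the pattern already employed in Proposition \ref{prop 3}.
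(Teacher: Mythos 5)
Your proof is correct and takes essentially the same route as the paper's: both start from Lemma \ref{lemma 3.6}, use $\delta\le 1$ and $\uprho\ge 1$ to reduce the inhomogeneous term $2m\uprho^{\delta-1}$ to $2m$, invoke the uniform curvature bound on $\M^m\times[0,t_1]$ for the zero-order coefficient, and conclude at-most-exponential growth of $u=\uprho^{\delta-1}\vert\F^\perp\vert^2$. Your explicit shift $w=u+2m/b$ to apply Proposition \ref{prop 2} simply makes precise the paper's informal statement that ``$u$ can grow at most exponentially in time,'' and your final translation via $\uprho\le(1+2mt)(1+\vert\F\vert^2)$ tracks the $(1+2mt)^{1-\delta}$ factor a bit more carefully than the paper's displayed chain of inequalities, which otherwise has the inequality running in the wrong direction before the constant is enlarged.
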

\begin{proof}
	Fix a time $t_0$. By assumption
	$$a_0:=\sup_{\M^m\times[0,t_0]}\vert A\vert<\infty.$$
	Lemma \ref{lemma 3.6} implies that on the time interval $[0,t_0]$ the function
	$$u:=\uprho^{\delta-1}\vert\F^\perp\vert^2$$
	satisfies the inequality
	$$\left(\dt-\Delta\right)u\le 4(a_0^2+\delta^2)u+2m\uprho^{\delta-1},$$
	where
	$$\uprho=1+\vert\F\vert^2+2mt.$$
	Since $\delta\le 1$ the last term on the right hand side of this inequality is bounded from above by $2m$ and we conclude
	$$\left(\dt-\Delta\right)u\le 4(a_0^2+\delta^2)u+2m.$$
	Thus $u$ can grow at most exponentially in time and there exists a constant $c(t_0)$ such 
	that
	$$\vert\F^\perp\vert^2\le c(t_0)(1+\vert\F\vert^2+2mt)^{1-\delta}\le c(t_0)(1+\vert\F\vert^2)^{1-\delta},$$
	for all $t\in[0,t_0]$.
\end{proof}

\section{Area decreasing maps in codimension two}\label{sec4}

A smooth map $\f:\real{m}\to \real{n}$ is called {\em area decreasing}, if 
$$|\D\f(v)\wedge \D\f(w)\vert<\vert v\wedge w\vert,\text { for all }v,w\in \real{m}.$$
In terms of the singular values of $\f$ this means that $\f$ is area decreasing, if and only if $\lambda_i\lambda_j<1$, for any $i\neq j$. The area decreasing property of $\f$ can be understood from a more geometric point of view. To this end consider the standard \textsc{Euclidean} and pseudo-\textsc{Riemannian} metrics $$\langle\cdot,\cdot\rangle_{\real{m+n}}=\dn\x^2+\dn\y^2,\quad \langle\cdot,\cdot\rangle_{\real{m,n}}=\dn\x^2-\dn\y^2$$
on $\real{m+n}$, where $\x,\y$ denote the cartesian coordinates on $\real{m}$, respectively on $\real{n}$. For any immersion $\F:\M^m\to\real{m+n}$ let us consider the symmetric bilinear forms $\gind,\sind$ on $\M^m$ defined by
$$\gind=\F^*\langle\cdot,\cdot\rangle_{\real{m+n}},\quad \sind=\F^*\langle\cdot,\cdot\rangle_{\real{m,n}}.$$
Hence $\gind$ is the induced metric, i.e. the first fundamental form on $\M$. The eigenvalues
$\sigma_1,\dots,\sigma_m$ of $\sind$ with respect to $\gind$ are bounded between $-1$ and 
$1$, and
$$\sigma_k>-1, k=1,\dots, m\quad\Leftrightarrow\quad\text{$\M$ is locally the graph over the $\x$-space $\real{m}$}.$$
$\sind$ is called \emph{two-positive}, if the sum of any two eigenvalues of $\sind$ is positive, i.e. if
$$\sigma_k+\sigma_l>0,\quad \text{for all}\quad k\neq l.$$
In particular, the two-positivity of $\sind$ implies that $\M$ is locally a graph. 
Note, that this does not follow, if we replace this by \emph{weakly two-positive}, i.e. by
$$\sigma_k+\sigma_l\ge 0,\quad \text{for all}\quad k\neq l.$$
If $\F(\x)=(\x,\f(\x))$ is a graph, then the eigenvalues of $\sind$ with respect to $\gind$ are related to the singular values of $\f$ by
$$\sigma_k=\frac{1-\lambda_k^2}{1+\lambda_k^2}, \quad k=1,\dots,m.$$
Since 
$$\sigma_k+\sigma_l=\frac{2\bigl(1-\lambda_k^2\lambda_l^2\bigr)}{\bigl(1+\lambda_k^2\bigr)\bigl(1+\lambda_l^2\bigr)}$$
we see that $\f$ is area decreasing, if and only if $\sind$ is two-positive. Hence
$$\sind\text{ is two-positive}\quad\Leftrightarrow\quad \M\text{ is locally the graph of an area decreasing map } \f.$$

\begin{remark}
	In contrast to the compact case, $\sup_{\M_t}\!\vert A\vert<\infty$ is not guaranteed for all $t\in[0,T)$. If $\f$ is uniformly area decreasing, then $\sup_{\M_0}\vert\D\f\vert^2<\infty$, and \eqref{mcf nonp} is uniformly parabolic. However,  if the map is merely area decreasing then $\vert\D\f\vert^2$ is not necessarily bounded, and the uniform parabolicity of \eqref{mcf nonp} might be violated.
\end{remark}
From now on let us assume that the codimension is $n=2$. On any submanifold
$\M\subset\real{m+2}$ we may consider the function 
$$\pind:=\frac{1}{2}\bigl(\operatorname{tr}_{\gind}\sind+2-m\bigr),$$
and since the eigenvalues of $\sind$ are bounded between $-1$ and $1$ we also get
\begin{equation}\nonumber
-1\le\pind\le 1.
\end{equation}
As was previously mentioned, $\pind>0$ on an $m$-dimensional submanifold $\M$ implies that $\M$ is locally the graph of an area decreasing map. However, $\pind\ge 0$ does not even imply that $\M$ is locally a graph. Instead, if $\pind\ge 0$ and $\M$ is locally a graph of a map $\f$, then $\f$ must be weakly area decreasing. In general, a map $\f:\real{m}\to\real{2}$ admits at most two non-trivial singular values
$\lambda$, $\mu$, and for the graph of $\f$ we obtain
$$\pind=\frac{1-\lambda^2\mu^2}{(1+\lambda^2)(1+\mu^2)}<1.$$
From
$$\f^*\langle\cdot,\cdot\rangle_{\real{n}}=\frac{1}{2}(\gind-\sind)$$
we deduce
$$\vert\nabla\f\vert^2=\operatorname{tr}_{\gind}\f^*\langle\cdot,\cdot\rangle_{\real{n}}=\frac{1}{2}(m-\operatorname{tr}_{\gind}\sind)=1-\pind,$$
where $\nabla\f$ is the gradient with respect to the induced metric $\gind$. Choosing $\varepsilon>0$, this yields the following characterization:
$$\begin{tabular}{r c l c l}
$\pind\ge 0$&$\Leftrightarrow$& $\vert\nabla\f\vert^2\le 1$&$\Leftrightarrow$&$\f $ is weakly area decreasing.\\
$\pind> 0$&$\Leftrightarrow$& $\vert\nabla\f\vert^2< 1$ &$\Leftrightarrow$&$\f $ is area decreasing.\\
$\pind\ge \varepsilon$&$\Leftrightarrow$& $\vert\nabla\f\vert^2\le 1-\epsilon$ &$\Leftrightarrow$&$\f $ is uniformly area decreasing.\\
\end{tabular}$$

The next theorem states that a weakly area decreasing map must either instantly become area decreasing or the flow splits off a linear subspace of dimension $m-2$.
\begin{theorem}\label{weakly}
	If $\,\M_t$ is a smooth solution of \eqref{mcf} with bounded gradient and bounded curvature on each $\M_t$, and if $\,\M_0$ is the graph of a weakly area decreasing map, then one and only one of the following two cases holds:
	\vspace*{4pt}
	\begin{enumerate}[\normalfont (1)]
		\item $\M_t$ instantly becomes the graph of an area decreasing map for $t>0$.
		\vspace{4pt}
		\item $\M_t$ splits into $\Sigma_t\times\real{m-2}$, where $\Sigma_t\subset \complex{2}$ is the graph of a symplectomorphism $\phi_t:\complex{}\to\complex{}$ that evolves by the Lagrangian mean curvature flow.
	\end{enumerate}
\end{theorem}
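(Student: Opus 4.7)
The plan is to apply the parabolic maximum principle of Section \ref{sec1} together with a strong-maximum-principle dichotomy. The key quantity is $\pind$: recall that $\pind\ge 0$ precisely when $\f$ is weakly area decreasing, while $\pind>0$ corresponds to the strictly area decreasing condition.

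\textbf{Step 1 (Evolution equation).} Using $\bigl(\dt-\Delta\bigr)\F=0$ and the induced evolutions of $\gind$ and $\sind$, one derives in codimension two an inequality of the form
$$
\bigl(\dt-\Delta\bigr)\pind\ \ge\ \langle\a,\nabla\pind\rangle+\b\,\pind+\mathcal{Q},
$$
where $\a$ and $\b$ are uniformly bounded on $\M^m\times[0,t_1]$ for any $t_1<\tmax$ (thanks to the standing bounded-gradient and bounded-curvature hypothesis) and $\mathcal{Q}\ge 0$ whenever $\pind\ge 0$. Because $\pind$ fails to be smooth at points where the two nonzero singular values of $\f$ coincide, the computation is performed on the smooth tensor $\sind$ and only afterwards translated into a scalar inequality for $\pind$; alternatively one works throughout with Hamilton's tensor maximum principle applied to the induced form on $\Lambda^2\ctbd$.

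\textbf{Step 2 (Preservation).} Applying Proposition \ref{prop 2} to $-\pind$ in its Lipschitz formulation yields $\pind\ge 0$ for all $t\in[0,\tmax)$, so the weakly area decreasing property survives along the flow.

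\textbf{Step 3 (Dichotomy via the strong maximum principle).} Either $\pind>0$ on $\M^m\times(0,\tmax)$, giving case (1), or there exists a spacetime point $(p_0,t_0)$ with $t_0>0$ at which $\pind(p_0,t_0)=0$. The reaction inequality of Step 1, combined with the local strict parabolicity guaranteed by the gradient bound, allows the local parabolic strong maximum principle to propagate $\pind(p_0,t_0)=0$ to $\pind\equiv 0$ on a parabolic neighborhood, and by connectedness to $\pind\equiv 0$ on $\M^m\times[0,\tmax)$.

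\textbf{Step 4 (Splitting in the rigid case).} From $\pind\equiv 0$ and the forced vanishing of $\mathcal{Q}$ one extracts pointwise algebraic identities: the kernel $\mathcal{K}_t:=\ker\D\f\subset\tbd$ is a parallel rank-$(m-2)$ distribution, its orthogonal $2$-plane field is totally geodesic inside $\M_t$, and both distributions are preserved by the second fundamental form and by the flow. The de~Rham decomposition theorem then splits $\M_t$ isometrically as $\Sigma_t\times\R^{m-2}$, with the Euclidean $\R^{m-2}$ factor spanned by $\mathcal{K}_t$ and with $\Sigma_t\subset\complex{2}$ a $2$-dimensional submanifold evolving by its mean curvature. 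The equality $\lambda\mu\equiv 1$ translates precisely into $\Sigma_t$ being Lagrangian with respect to the standard symplectic form on $\complex{2}\cong\real{4}$, so $\Sigma_t$ is the graph of a symplectomorphism $\phi_t\colon\complex{}\to\complex{}$ evolving by Lagrangian mean curvature flow. This is case (2), and the two alternatives are clearly mutually exclusive.

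The hardest step is Step 4: the strong maximum principle supplies only the scalar equality $\pind\equiv 0$, and promoting this to a genuine de~Rham splitting requires squeezing out of the vanishing of the reaction $\mathcal{Q}$ enough block structure on $A$ to identify parallel orthogonal distributions. This is also where the codimension-two hypothesis enters essentially, both to make the Lagrangian interpretation of $\lambda\mu\equiv 1$ meaningful and to ensure that the symplectic structure on $\complex{2}$ is preserved by the restricted flow on $\Sigma_t$.
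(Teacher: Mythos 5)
Your outline tracks the paper's argument closely in Steps 1--3: one derives, via the singular value decomposition, an inequality of the form
$\bigl(\dt-\Delta\bigr)\pind\ge\langle\a,\nabla\pind\rangle+\b\,\pind$ with $\b\ge 0$ (Lemma \ref{lemma 4.4}), applies Proposition \ref{prop 2} to preserve $\pind\ge 0$, and then invokes the strong parabolic maximum principle (available because the bounded-gradient hypothesis keeps $\Delta$ uniformly elliptic) to get the dichotomy $\pind>0$ for $t>0$ versus $\pind\equiv 0$. One small correction: the worry in Step 1 that $\pind$ ``fails to be smooth at points where the two nonzero singular values coincide'' is unfounded. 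The individual singular values are only Lipschitz, but $\pind=\tfrac12(\operatorname{tr}_{\gind}\sind+2-m)$ is a trace, hence a smooth function of $(p,t)$; the singular value decomposition is used only to express the right-hand side conveniently, not to repair any regularity issue, and no tensor maximum principle on $\Lambda^2\ctbd$ is needed.

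The serious gap is in Step 4. You assert in a single sentence that ``both distributions are preserved by the second fundamental form and by the flow,'' but preservation by the flow is exactly the delicate point and does not follow from the scalar identity $\pind\equiv 0$ alone. The de~Rham theorem, applied at a fixed time $t$, gives a splitting $\M_t\cong\Sigma_t\times\real{m-2}$ with the flat factor along $\mathcal{D}_t=\ker(\sind^\sharp-\operatorname{Id})$; a priori this $(m-2)$-plane distribution, and hence the surface factor, could rotate inside $\real{m+2}$ as $t$ varies, in which case $\phi_t$ would not evolve by the Lagrangian mean curvature flow of $\Sigma_0$. The paper closes this gap in its step (ii): it works on the space-time manifold $\M^m\times[0,\tmax)$, shows $\nabla_w\H=0$ for $w\in\mathcal{D}$ via $A(\mathcal{D},\cdot)=0$ and Codazzi, deduces $\nabla_{\dt}\sind(v,w)=0$ for $w\in\mathcal{D}$, and then repeats the parallel-transport argument in the time direction to conclude $\nabla_{\dt}W\in\Gamma(\mathcal{D})$ and $\nabla_{\dt}U\in\Gamma(\mathcal{D}^\perp)$. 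Only then is the isomorphism $L_t:\complex{}\times\real{m-2}\to\real{2m}$ time-independent, and only then does the reduction to a Lagrangian MCF of $\phi_t$ make sense. Without this step your argument establishes the pointwise splitting at each time but not the claimed case~(2). You should also be slightly more careful with the symplectic statement: the graph of the area-preserving $\phi_t$ is Lagrangian for the difference form $\omega\ominus\omega$ on $\complex{}\times\complex{}$, not the standard K\"ahler form on $\complex{2}$.
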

Before we can prove this theorem we need to recall some of the notations
and results in \cite{ASS22}.
We start with the singular value decomposition
and follow the exposition in \cite{SS14}, \cite{SS15} and \cite{SS18}.
Let $\f:\real{m}\to\real{2}$ be smooth, fix a point $\x\in \real{m}$, and let
$$\lambda_{1}\ge\displaystyle{\dots}\ge\lambda_{m}$$
denote the singular values of
$\f^{*}\langle\cdot,\cdot\rangle_{\real{2}}$ at $\x$ with respect to $\langle\cdot,\cdot\rangle_{\real{m}}$. Since the rank of $\D\f$ is at most $2$ the singular values of $\D\f$ at $\x$
are given by
$$\lambda:=\lambda_1,\quad\mu:=\lambda_2,\quad\text{ and }\lambda_k=0,\,\text{ for }k\ge 3.$$
There exists an orthonormal basis $\{\alpha_{1},\dots,\alpha_{m}\}$ of $\real{m}$ and an orthonormal basis $\beta_1,\beta_2$ of $\real{2}$ such that at $\x$
$$\D\f(\alpha_1)=\lambda\beta_1,\quad \D\f(\alpha_2)=\mu\beta_2,\quad\D\f(\alpha_k)=0,\,\text{ for } k\ge 3.$$
Therefore, at $\x$ we have with respect to this basis 
$$\gind=\operatorname{diag}\bigl(1+\lambda^2,1+\mu^2,0,\dots,0\bigr),\quad \sind=\operatorname{diag}\bigl(1-\lambda^2,1-\mu^2,0,\dots,0\bigr).$$
Then the frame $\{e_1,\dots,e_m\}$ given by
\begin{equation*}
e_1:=\frac{\alpha_1}{\sqrt{1+\lambda^2}}, \quad e_2:=\frac{\alpha_2}{\sqrt{1+\mu^2}},\quad e_{i}:=\alpha_i,\,\,\text{for }i\ge 3,\label{tangent}
\end{equation*}
forms an orthonormal basis of $\tmax_{\x}\M$ with respect to the metric $\gind$ and $\{\xi,\upeta\}$ given by
\begin{equation*}
\xi:=\frac{-\lambda\alpha_1\oplus\beta_1}{\sqrt{1+\lambda^2}},\quad
\upeta:=\frac{-\mu\alpha_2\oplus\beta_2}{\sqrt{1+\mu^2}},\label{normal}
\end{equation*}
forms an orthonormal basis of the normal space $\tmax_{\x}^\perp \M$ of $\M$ at $\F(\x)=(\x,\f(\x))$.
With respect to the orthonormal frame $\{e_1,\dots,e_m\}$ the bilinear form $\sind$ satisfies
\begin{equation}\nonumber
\left(\sind(e_{i},e_{j})\right)_{i,j}=\operatorname{diag}\left(\frac{1-\lambda^{2}}{1+\lambda^{2}},\frac{1-\mu^{2}}{1+\mu^{2}},1,\dots,1\right).
\end{equation}
The restriction $\sind^\perp$ of the pseudo-\textsc{Riemannian} metric $\langle\cdot,\cdot\rangle_{\real{m,2}}$ onto the normal bundle of the graph satisfies the identities
\begin{eqnarray}
\sind^{\perp}(\xi,\xi)=\frac{\lambda^{2}-1}{1+\lambda^{2}},\quad \sind^{\perp}(\upeta,\upeta)=\frac{\mu^{2}-1}{1+\mu^{2}},\quad \sind^{\perp}(\xi,\upeta)=0.\nonumber
\end{eqnarray}
Then
\begin{align}
&\langle\D\F(e_{k}),\xi\rangle_{\real{m,2}}=\langle\D\F(e_{k}),\upeta\rangle_{\real{m,2}}=0,\,\text{  for }k\ge 3,\label{mixeda}\\ &\langle\D\F(e_{1}),\upeta\rangle_{\real{m,2}}=\langle\D\F(e_{2}),\xi\rangle_{\real{m,2}}=0.\label{mixedb}
\end{align}
Define now the quantities $\tmax_{11}=\langle\D\F(e_{1}),\xi\rangle_{\real{m,2}}$ and $\tmax_{22}=\langle\D\F(e_{2}),\upeta\rangle_{\real{m,2}}$. Then
\begin{equation}\nonumber
\sind_{11}=\frac{1-\lambda^2}{1+\lambda^2},\quad\tmax_{11}=-\frac{2\lambda}{1+\lambda^{2}},\quad\text{and}\quad\sind_{22}=\frac{1-\mu^2}{1+\mu^2},\quad
\tmax_{22}=-\frac{2\mu}{1+\mu^{2}},
\end{equation}
so that
$$\sind_{kk}^2+\tmax_{kk}^2=1,\quad k=1,2.$$
With respect to the orthonormal bases $\{e_1,\dots,e_m\}$, $\{\xi,\upeta\}$ the components of the second fundamental form $A$ will be denoted by
$$A^\xi_{kl}:=\langle A(e_k,e_l),\xi\rangle_{\real{m+2}},\quad A^\upeta_{kl}:=\langle A(e_k,e_l),\upeta\rangle_{\real{m+2}}.$$
\begin{lemma}\label{evolpind}
	With respect to the singular value decomposition $\pind$ satisfies 
	\begin{eqnarray}
	\nabla_{e_k}\pind&=&A^\xi_{1k}\tind_{11}+A^\upeta_{2k}\tind_{22},\nonumber\\
	\left(\dt-\Delta\right)\pind&=&2\pind\vert A\vert^2+\sum_{k=1, i=3}^m\vert A^\xi_{ki}\vert ^2(1-\sind_{11})+\sum_{k=1, i=3}^m\vert A^\upeta_{ki}\vert ^2(1-\sind_{22})\nonumber\\
	&&+\underbrace{\sum_{k=1}^m\big(\vert A^\xi_{1k}\vert^2-\vert A^\upeta_{2k}\vert^2\big)(\sind_{11}-\sind_{22})}_{=:\mathcal{B}},\nonumber
	\end{eqnarray}
	where $\mathcal{B}$ satisfies
	\begin{equation}
	2\pind \mathcal{B}=-\vert\nabla\pind\vert^2+\sum_{k=1}^m\vert A^\xi_{1k}\tind_{22}+A^\upeta_{2k}\tind_{11}\vert^2\label{p4}.
	\end{equation}
	Moreover, in case $\pind>0$ the function $\qind:=\sqrt{\pind}$ is well defined and smooth, and satisfies
	\begin{equation}
	\left(\dt-\Delta\right)\qind\ge\qind\vert A\vert^2.\label{p10}
	\end{equation}
\end{lemma}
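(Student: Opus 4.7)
The approach is a direct tensorial computation in the singular value decomposition (SVD) frame adapted to the graph, in the spirit of the authors' earlier works \cite{SS14,SS15,SS18,ASS22}. At the chosen point the SVD diagonalization gives $\sind_{kk}=1$ for $k\ge 3$, so $\operatorname{tr}_\gind\sind = \sind_{11}+\sind_{22}+(m-2)$ and $\pind = \tfrac{1}{2}(\sind_{11}+\sind_{22})$. For the gradient formula, since $\sind=\F^*\langle\cdot,\cdot\rangle_{\real{m,2}}$ is the pull-back of a parallel ambient form, the Gauss formula yields
$$\nabla_{e_k}\sind(e_i,e_j) = \langle A(e_k,e_i),\F_j\rangle_{\real{m,2}} + \langle A(e_k,e_j),\F_i\rangle_{\real{m,2}}.$$
Tracing with $\gind$ and using \eqref{mixeda}, \eqref{mixedb} together with the identifications $\tind_{11} = \langle\D\F(e_1),\xi\rangle_{\real{m,2}}$ and $\tind_{22} = \langle\D\F(e_2),\upeta\rangle_{\real{m,2}}$, only the two claimed terms $A^\xi_{1k}\tind_{11}$ and $A^\upeta_{2k}\tind_{22}$ survive.

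Next I would compute $\left(\dt-\Delta\right)\pind$ by combining the MCF evolution of $\gind$ with $\dt\sind$, which follows from $\dt\F=\H$ and the parallelism of $\langle\cdot,\cdot\rangle_{\real{m,2}}$. A Simons-type identity applied to $\operatorname{tr}_\gind\sind$ produces, after diagonalizing at the point, the three groups of quadratic terms: the $2\pind|A|^2$ term arises from the full $|A|^2$ contribution weighted by the trace of $\sind$; the factors $(1-\sind_{11})$ and $(1-\sind_{22})$ come from pairing the components $A^\xi_{ki}, A^\upeta_{ki}$ with the diagonal $\sind$-entries for $i\ge 3$ (where $\sind_{ii}=1$); and the remaining contributions, in which the fixed index $k$ is paired only with the non-trivial diagonal directions $1$ and $2$, assemble into $\mathcal{B}$. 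For the identity \eqref{p4}, I would expand
$$|\nabla\pind|^2 = \sum_k\bigl(A^\xi_{1k}\tind_{11}+A^\upeta_{2k}\tind_{22}\bigr)^2, \qquad \sum_k\bigl|A^\xi_{1k}\tind_{22}+A^\upeta_{2k}\tind_{11}\bigr|^2,$$
take the difference, and apply $\tind_{kk}^2 = 1-\sind_{kk}^2$ for $k=1,2$ together with $\sind_{11}+\sind_{22}=2\pind$ to recognize the result as $2\pind\mathcal{B}$.

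For \eqref{p10}, setting $\qind=\sqrt{\pind}$ the chain rule gives
$$\left(\dt-\Delta\right)\qind = \frac{\left(\dt-\Delta\right)\pind}{2\qind} + \frac{|\nabla\pind|^2}{4\qind^3}.$$
Identity \eqref{p4} delivers $\mathcal{B}\ge -|\nabla\pind|^2/(2\pind)$, and the two $i\ge 3$ sums in the evolution of $\pind$ are non-negative because $\sind_{kk}\le 1$. Plugging these bounds into the chain rule identity, the terms in $|\nabla\pind|^2$ cancel exactly, leaving $\left(\dt-\Delta\right)\qind \ge \qind|A|^2$. The main obstacle is the bookkeeping in the second step: carrying out the Simons-type computation for $\Delta\operatorname{tr}_\gind\sind$ on the moving submanifold and verifying that, in the diagonalizing frame, the result assembles into precisely the stated sum without stray cross terms. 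Extra care is needed because the SVD frame is only defined up to orthogonal actions on the degenerate directions and is not in general smooth, so the identity must be checked pointwise and the resulting tensorial expression shown to be well-defined globally.
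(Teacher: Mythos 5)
Your proposal goes a different route from the paper: the paper's entire proof is a citation to \cite[Lemma~4.1, eqns.~(4.6)--(4.8)]{ASS22}, where this computation was already carried out for a function that agrees with $\pind$ up to a factor of $2$, whereas you attempt a direct derivation in the SVD frame. The parts you actually work through are correct. Your derivation of $\nabla_{e_k}\pind$ from the pull-back formula \eqref{grad s} together with \eqref{mixeda}, \eqref{mixedb} is exactly right. Your verification of \eqref{p4} is also right: the cross terms cancel in the difference, and the leftover $(\tind_{22}^2-\tind_{11}^2)=(\sind_{11}^2-\sind_{22}^2)=2\pind(\sind_{11}-\sind_{22})$ gives precisely $2\pind\mathcal{B}$. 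Your deduction of \eqref{p10} is also correct: the chain rule contributes $+|\nabla\pind|^2/(4\qind^3)$, the $\mathcal{B}$-bound from \eqref{p4} contributes $-|\nabla\pind|^2/(4\pind\qind)$, and since $\qind^3=\pind\qind$ these cancel exactly, while the two $i\ge3$ sums are $\ge 0$ because $\sind_{11},\sind_{22}\le 1$.

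However, the central identity of the lemma --- the precise form of $(\dt-\Delta)\pind$ --- is only described, not derived. Saying that ``the $2\pind|A|^2$ term arises from the full $|A|^2$ contribution weighted by the trace of $\sind$'' and that the remaining terms ``assemble into $\mathcal{B}$'' is a statement of the conclusion, not a computation. You flag this honestly as the main obstacle, and you are right to: the bookkeeping of the Simons-type identity for $\Delta\operatorname{tr}_{\gind}\sind$ under MCF, together with the pointwise diagonalization and the argument that the result is globally well defined despite the SVD frame not being smooth across degeneracies, is exactly the content of \cite[Lemma~4.1]{ASS22} that this lemma relies on. So as written, your proposal is a correct roadmap plus complete verifications of \eqref{p4} and \eqref{p10}, but the evolution equation itself would still need to be established by carrying out that computation (or by citing the reference, as the paper does).
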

\begin{proof}
	Up to a factor $2$ the function $\pind$ coincides with the function given in \cite[Lemma 4.1]{ASS22}. Therefore the statement follows from the computations in that
	lemma, for more details see \cite[eqns (4.6)-(4.8)]{ASS22}.
\end{proof}
Since we work in a non-compact setting we cannot use \eqref{p10} directly, because the function $\pind$ might instantly vanish or become even negative at some points for $t>0$. Therefore, we need a better variant of the evolution equation for $\pind$. The idea is to replace the term $\mathcal{B}$ in Lemma \ref{evolpind} by a gradient term and another quantity that depends on $\pind$. It is straightforward to verify
\begin{eqnarray}
2-\sind_{11}^2-\sind_{22}^2&=&2(1+\sind_{11})(1+\sind_{22})-4\pind(\pind+1).\label{ind}\\
(1+\sind_{11})(1+\sind_{22})&=&\frac{4}{(1+\lambda^2)(1+\mu^2)}\ge\frac{4}{(1+\vert\D\f\vert^2)^2}.\label{ind2}
\end{eqnarray}
For the vector field
$$X=\sum_{k=1}^mX_ke_k,\quad\text{where}\quad X_k:=A^\xi_{1k}\tmax_{11}-A^\upeta_{2k}\tmax_{22},$$
we compute with Lemma \ref{evolpind}
\begin{eqnarray}
2\langle\nabla\pind,X\rangle&=&2\sum_{k=1}^m\vert A^\xi_{1k}\vert^2\tmax_{11}^2-2\sum_{k=1}^m\vert A^\upeta_{2k}\vert^2\tmax_{22}^2\nonumber\\
&=&(\tmax_{11}^2+\tmax_{22}^2)\sum_{k=1}^m\bigl(\vert A^\xi_{1k}\vert^2-\vert A^\upeta_{2k}\vert^2\bigr)+(\tmax_{11}^2-\tmax_{22}^2)\sum_{k=1}^m\bigl(\vert A^\xi_{1k}\vert^2+\vert A^\upeta_{2k}\vert^2\bigr),\nonumber
\end{eqnarray}
and with
$$\sind_{kk}^2+\tind_{kk}^2=1$$
this leads to
\begin{eqnarray}
2\langle\nabla\pind,X\rangle&=&(2-\sind_{11}^2-\sind_{22}^2)\sum_{k=1}^m\bigl(\vert A^\xi_{1k}\vert^2-\vert A^\upeta_{2k}\vert^2\bigr)+(\sind^2_{22}-\sind^2_{11})\sum_{k=1}^m\bigl(\vert A^\xi_{1k}\vert^2+\vert A^\upeta_{2k}\vert^2\bigr)\nonumber\\
&\overset{(\ref{ind})}{=}&2(1+\sind_{11})(1+\sind_{22})\sum_{k=1}^m\bigl(\vert A^\xi_{1k}\vert^2-\vert A^\upeta_{2k}\vert^2\bigr)\nonumber\\
&&-4\pind(\pind+1)\sum_{k=1}^m\bigl(\vert A^\xi_{1k}\vert^2-\vert A^\upeta_{2k}\vert^2\bigr)+2\pind(\sind_{22}-\sind_{11})\sum_{k=1}^m\bigl(\vert A^\xi_{1k}\vert^2+\vert A^\upeta_{2k}\vert^2\bigr)\nonumber\\
&=&2(1+\sind_{11})(1+\sind_{22})\sum_{k=1}^m\bigl(\vert A^\xi_{1k}\vert^2-\vert A^\upeta_{2k}\vert^2\bigr)\nonumber\\
&&+4\pind\left((1+\sind_{22})\sum_{k=1}^m\vert A^\upeta_{2k}\vert^2-(1+\sind_{11})\sum_{k=1}^m\vert A^\xi_{1k}\vert^2\right).\nonumber
\end{eqnarray}
We may use
$$1+\sind_{11}>0\quad \text{and} \quad 1+\sind_{22}>0,$$ because all singular values of $\f$ are finite as long as we have a graphical solution. This implies the following identity for graphs in codimension two:
\begin{eqnarray*}
	\mathcal{B}&=&\frac{\sind_{11}-\sind_{22}}{(1+\sind_{11})(1+\sind_{22})}\langle\nabla\pind,X\rangle\\
	&&+2\pind\left(\frac{\sind_{22}-\sind_{11}}{1+\sind_{11}}\sum_{k=1}^m\vert A^\upeta_{2k}\vert^2+\frac{\sind_{11}-\sind_{22}}{1+\sind_{22}}\sum_{k=1}^m\vert A^\xi_{1k}\vert^2\right).
\end{eqnarray*}
Note also that from
\begin{equation}\label{grad s}
\nabla_{e_k}\sind(e_i,e_j)=\langle A(e_k,e_i),\D\F(e_j)\rangle_{\real{m,2}}+\langle A(e_k,e_j),\D\F(e_i)\rangle_{\real{m,2}}
\end{equation}
we get
\begin{eqnarray*}
	\nabla_{e_k}(\sind_{11}-\sind_{22})^2&=&4(\sind_{11}-\sind_{22})\bigl(\langle A(e_k,e_1),\D\F(e_1)\rangle_{\real{m,2}}-\langle A(e_k,e_2),\D\F(e_
	2)\rangle_{\real{m,2}}\bigr)\\
	&=&4(\sind_{11}-\sind_{22})(A^\xi_{1k}\tind_{11}-A^\upeta_{2k}\tind_{22})\\
	&=&4(\sind_{11}-\sind_{22})X_k.
\end{eqnarray*}
Summarizing, we have shown the following:
\begin{lemma}\label{lemma 4.4}
	With respect to the singular value decomposition the function $\pind$ satisfies
	\begin{eqnarray}
	\left(\dt-\Delta\right)\pind&=&\langle\a,\nabla\pind\rangle +2\pind\left(\vert A\vert^2+\frac{\sind_{22}-\sind_{11}}{1+\sind_{11}}\sum_{k=1}^m\vert A^\upeta_{2k}\vert^2+\frac{\sind_{11}-\sind_{22}}{1+\sind_{22}}\sum_{k=1}^m\vert A^\xi_{1k}\vert^2\right)\nonumber\\
	&&+\sum_{k=1, i=3}^m\vert A^\xi_{ki}\vert ^2(1-\sind_{11})+\sum_{k=1, i=3}^m\vert A^\upeta_{ki}\vert ^2(1-\sind_{22}),\nonumber
	\end{eqnarray}
	where $\a$ is the smooth vector field given by
	\begin{equation}
	\a=\frac{\nabla(\sind_{11}-\sind_{22})^2}{4(1+\sind_{11})(1+\sind_{22})}.\label{vec a}
	\end{equation}
\end{lemma}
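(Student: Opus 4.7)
The plan is to start from the already-derived evolution equation in Lemma~\ref{evolpind} and to rewrite only the anisotropic term~$\mathcal{B}$ there: the $\vert A\vert^2$ contribution and the two $(1-\sind_{kk})$ terms are already in the target form, so the entire task reduces to expressing $\mathcal{B}$ as a sum $\langle\a,\nabla\pind\rangle+2\pind(\cdots)$ with $\a$ as in \eqref{vec a}.

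First I would introduce the tangent vector field $X=\sum_{k=1}^m X_k e_k$ with components $X_k:=A^\xi_{1k}\tind_{11}-A^\upeta_{2k}\tind_{22}$, and pair it with $\nabla\pind$ using the gradient formula from Lemma~\ref{evolpind}. After regrouping, this gives
$$2\langle\nabla\pind,X\rangle = (\tind_{11}^2+\tind_{22}^2)\sum_{k=1}^m\bigl(\vert A^\xi_{1k}\vert^2-\vert A^\upeta_{2k}\vert^2\bigr) + (\tind_{11}^2-\tind_{22}^2)\sum_{k=1}^m\bigl(\vert A^\xi_{1k}\vert^2+\vert A^\upeta_{2k}\vert^2\bigr).$$
Then I would invoke $\sind_{kk}^2+\tind_{kk}^2=1$ for $k=1,2$ together with \eqref{ind} to rewrite the two prefactors in terms of $\sind$'s and $\pind$, noting that $\tind_{11}^2-\tind_{22}^2=\sind_{22}^2-\sind_{11}^2=2\pind(\sind_{22}-\sind_{11})$ by the definition of $\pind$. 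Since the singular values are finite on a graphical solution, \eqref{ind2} guarantees $1+\sind_{11},1+\sind_{22}>0$, so dividing through by $(1+\sind_{11})(1+\sind_{22})$ and solving for $\mathcal{B}$ yields
$$\mathcal{B} = \frac{\sind_{11}-\sind_{22}}{(1+\sind_{11})(1+\sind_{22})}\langle\nabla\pind,X\rangle + 2\pind\left(\frac{\sind_{22}-\sind_{11}}{1+\sind_{11}}\sum_{k=1}^m\vert A^\upeta_{2k}\vert^2 + \frac{\sind_{11}-\sind_{22}}{1+\sind_{22}}\sum_{k=1}^m\vert A^\xi_{1k}\vert^2\right).$$

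To identify $\a$ in the form \eqref{vec a}, I would compute $\nabla(\sind_{11}-\sind_{22})^2$ using \eqref{grad s}. Differentiating $\sind_{kk}$ and evaluating at the singular-value adapted orthonormal frame produces $\nabla_{e_k}(\sind_{11}-\sind_{22})^2 = 4(\sind_{11}-\sind_{22})X_k$, hence
$$\frac{\nabla(\sind_{11}-\sind_{22})^2}{4(1+\sind_{11})(1+\sind_{22})} = \frac{\sind_{11}-\sind_{22}}{(1+\sind_{11})(1+\sind_{22})}X,$$
whose pairing with $\nabla\pind$ matches precisely the gradient piece of $\mathcal{B}$. Substituting back into Lemma~\ref{evolpind} and absorbing the remaining $\pind$-proportional correction into the coefficient of $2\pind$ then produces the stated formula.

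The only real obstacle is bookkeeping in the middle step: the substitution of $\tind_{11}^2+\tind_{22}^2$ via \eqref{ind} must be combined correctly with the rewriting of $\tind_{11}^2-\tind_{22}^2$ and the identity $\sind_{11}+\sind_{22}=2\pind$ so that the coefficients collapse into matching $(1+\sind_{11})$ and $(1+\sind_{22})$ factors that cancel against the denominator. A sign slip here would obscure the recognition of $\a$ as the gradient in \eqref{vec a} and prevent the remainder from factoring cleanly through $2\pind$; everything else is straightforward substitution.
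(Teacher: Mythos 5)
Your proposal reproduces the paper's own derivation essentially step by step: you introduce the same auxiliary vector field $X_k=A^\xi_{1k}\tind_{11}-A^\upeta_{2k}\tind_{22}$, perform the same regrouping of $2\langle\nabla\pind,X\rangle$, apply the same identities $\sind_{kk}^2+\tind_{kk}^2=1$, \eqref{ind}, and $\sind_{11}+\sind_{22}=2\pind$, arrive at the identical expression for $\mathcal{B}$, and identify $\a$ via \eqref{grad s} exactly as the paper does. The only cosmetic imprecision is the phrase \emph{dividing through by $(1+\sind_{11})(1+\sind_{22})$}: one actually multiplies the $X$-pairing identity by $\frac{\sind_{11}-\sind_{22}}{2(1+\sind_{11})(1+\sind_{22})}$ to isolate $\mathcal{B}$, but your final formula for $\mathcal{B}$ is correct, so this is a wording issue rather than a gap.
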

We are now ready to prove that the weakly area decreasing property is preserved.

{\em Proof of Theorem $4.2$.}
	Suppose that the solution of \eqref{mcf} is given on $[0,t_0)$. For graphs we always have
	$$1\ge\sind_{kk}>-1, \quad k=1,2,$$
	and we can use the evolution equation of $\pind$ in Lemma \ref{lemma 4.4} to estimate
	\begin{eqnarray}
	\left(\dt-\Delta\right)\pind&\ge&\langle\a,\nabla\pind\rangle +2\pind\left(\vert A\vert^2+\frac{\sind_{22}-\sind_{11}}{1+\sind_{11}}\sum_{k=1}^m\vert A^\upeta_{2k}\vert^2+\frac{\sind_{11}-\sind_{22}}{1+\sind_{22}}\sum_{k=1}^m\vert A^\xi_{1k}\vert^2\right).\nonumber
	\end{eqnarray}
	From \eqref{ind2}, \eqref{grad s}, and \eqref{vec a} we obtain that the vector field $\a$ is uniformly bounded on each finite time interval $[0,t_1]$, $t_1\in(0,t_0)$. On the other hand
	\begin{eqnarray}
	\vert A\vert^2&+&\frac{\sind_{22}-\sind_{11}}{1+\sind_{11}}\sum_{k=1}^m\vert A^\upeta_{2k}\vert^2+\frac{\sind_{11}-\sind_{22}}{1+\sind_{22}}\sum_{k=1}^m\vert A^\xi_{1k}\vert^2\nonumber\\
	&\ge&\sum_{k=1}^m\vert A^\xi_{1k}\vert^2+\sum_{k=1}^m\vert A^\upeta_{2k}\vert^2+\frac{\sind_{22}-\sind_{11}}{1+\sind_{11}}\sum_{k=1}^m\vert A^\upeta_{2k}\vert^2+\frac{\sind_{11}-\sind_{22}}{1+\sind_{22}}\sum_{k=1}^m\vert A^\xi_{1k}\vert^2\nonumber\\
	&=&\frac{1+\sind_{22}}{1+\sind_{11}}\sum_{k=1}^m\vert A^\upeta_{2k}\vert^2+\frac{1+\sind_{11}}{1+\sind_{22}}\sum_{k=1}^m\vert A^\xi_{1k}\vert^2\ge 0,\nonumber
	\end{eqnarray}
	so that 
	\begin{equation}\label{strong}
	\left(\dt-\Delta\right)\pind\ge\langle\a,\nabla\pind\rangle +\b \pind.
	\end{equation}
	for the function
	$$\b:=2\left(\vert A\vert^2+\frac{\sind_{22}-\sind_{11}}{1+\sind_{11}}\sum_{k=1}^m\vert A^\upeta_{2k}\vert^2+\frac{\sind_{11}-\sind_{22}}{1+\sind_{22}}\sum_{k=1}^m\vert A^\xi_{1k}\vert^2\right)\ge 0.$$
	We may therefore apply the maximum principle derived in Proposition \ref{prop 2} to conclude $\pind\ge 0$, for all $t\in[0,t_1]$. This shows $\f$ stays weakly area decreasing. Furthermore, on $[0,t_1]\subset [0,t_0)$ the singular values are uniformly bounded, the \textsc{Laplace-Beltrami} operator $\Delta$ stays uniformly elliptic, and $\a$ stays uniformly bounded. Thus we can apply the standard strong parabolic maximum principle to \eqref{strong} to conclude that either $\pind>0$ for $t\in(0,t_0)$ or $\pind\equiv 0$ for all $t\ge 0$. In the second case we derive from $\lambda^2\mu^2=1$ that $1-\sind_{kk}>0$, $k=1,2$. Then Lemma \ref{lemma 4.4} implies $A_{ki}=0$, for $k\ge 1, i\ge 3$. We continue in two steps.
	
	\vspace*{10pt}
	(i) Fix a time $t$, and let $\sind^\sharp$ denote the self-adjoint endomorphism induced by $\sind$, i.e.
	$$\gind(\sind^\sharp v,w)=\sind(v,w),\, \text{ for all }v,w\in \tmax\M.$$ 
	We claim that the distributions $\mathcal{D}_t:=\operatorname{ker}(\sind^\sharp-\operatorname{Id})$ and its orthogonal complement $\mathcal{D}_t^\perp$ are invariant under parallel transport. Using the singular value decomposition, we see that the fibers of $\mathcal{D}_t$ are spanned by the vectors $e_3,\dots,e_m$ and then $A_{ki}=0$, for all $k\ge 1, i\ge 3$ means
	\begin{equation}\label{invol 1}
	A(v,w)=0,\,\text{ for all } v\in\tmax\M, w\in \mathcal{D}_t.
	\end{equation} 
	Since
	$$\gind\bigl((\nabla_v\sind^\sharp)w,u\bigr)=\nabla_{v}\sind(w,u)=\langle A(v,w),\D\F(u)\rangle_{\real{m,2}}+\langle A(v,u),\D\F(w)\rangle_{\real{m,2}},$$
	equations \eqref{mixeda}, \eqref{mixedb} and \eqref{invol 1} imply
	\begin{equation}\label{invol 2a}
	\gind\bigl((\nabla_v\sind^\sharp)w,u\bigr)=0,\,\text{ if one of the vectors }v, w, u \text{ lies in }\mathcal{D}_t.
	\end{equation}
	Now let $V\in\Gamma(\tmax\M)$, $W\in\Gamma(\mathcal{D}_t)$, $U\in\Gamma(\mathcal{D}_t^\perp)$ be smooth sections. 
	\begin{eqnarray}
	\gind(\nabla_VW,U)&\overset{W\in\Gamma(\mathcal{D}_t)}{=}&\gind(\nabla_V(\sind^\sharp W),U)=\gind\bigl((\nabla_V\sind^\sharp)W+\sind^\sharp(\nabla_VW),U\bigr)\nonumber\\
	&\overset{(\ref{invol 2a})}{=}&\gind(\sind^\sharp(\nabla_VW),U).\nonumber
	\end{eqnarray}
	Since this holds for any $U\in\Gamma(\mathcal{D}_t^\perp)$ we must have
	$$\nabla_VW-\sind^\sharp(\nabla_VW)\in\Gamma(\mathcal{D}_t).$$
	Decomposing 
	$$\nabla_VW=X_1+X_2,$$
	where $X_1\in\Gamma(\mathcal{D}_t^\perp), X_2\in\Gamma(\mathcal{D}_t)$, we obtain from $\sind^\sharp X_2=X_2$ that $X_1-\sind^\sharp X_1\in\Gamma(\mathcal{D}_t)$. On the other hand this lies in $\Gamma(\mathcal{D}_t^\perp)$, so that $X_1\equiv 0$, i.e. we have shown
	\begin{equation}\nonumber
	\nabla_VW\in\Gamma(\mathcal{D}_t),\,\text{ for all }V\in\Gamma(\tmax\M), W\in\Gamma(\mathcal{D}_t).
	\end{equation}
	But then also
	$$0=V\gind(W,U)=\gind(\nabla_VW,U)+\gind(W,\nabla_VU)=\gind(W,\nabla_VU).$$
	Again, this holds for any $V\in\Gamma(\tmax\M)$, $W\in\Gamma(\mathcal{D}_t)$, $U\in\Gamma(\mathcal{D}_t^\perp)$, from which we conclude that
	\begin{equation}\nonumber
	\nabla_VU\in\Gamma(\mathcal{D}_t^\perp),\,\text{ for all }V\in\Gamma(\tmax\M), U\in\Gamma(\mathcal{D}_t^\perp).
	\end{equation}
Hence $\mathcal{D}_t, \mathcal{D}_t^\perp$ are both invariant under parallel transport. It follows 
from the \textsc{de Rham} decomposition theorem and the fact that $A$ vanishes on $\mathcal{D}_t$ 
that $\M_{t}$ locally splits into the \textsc{Riemannian} product of a surface $\Sigma_{t}$ and a totally 
geodesic subset $E_{t}$ of dimension $m-2$. The submanifold $\M_{t}$ is simply connected since it is 
an entire graph over $\real{m}$. Therefore the splitting is global. Since
$\mathcal{D}_t=\operatorname{ker}\D\f\vert_t$ and $E_t$ is totally geodesic we conclude the existence 
of an isomorphism $L_t:\complex{}\times\real{m-2}\to\real{2m}$ and of a smooth map
$\phi_t:\complex{} \to \complex{}$ such that $\tmax\complex{}=L_t^*\mathcal{D}_t^\perp$,
$\tmax\real{m-2}=L_t^*\mathcal{D}_t$, and
$$\f(L_t(z,\zeta),t)=\phi_t(z),$$ for all $(z,\zeta)\in \complex{}\times \real{m-2}.$
Since $\pind=0$ the map $\phi_t:\complex{} \to \complex{}$ is area preserving which is equivalent to $\phi_t$ being a symplectomorphism with respect to the standard symplectic structure $\omega$ on $\complex{}$. Hence the graph of $\phi_t$ is \textsc{Lagrangian} in $\bigl(\complex{2},\omega\ominus \omega\bigr)$,
	
	\vspace*{10pt}
	(ii) The splitting in (i) shows that at each time $t$ we have 
	$$\tmax\M=\mathcal{D}_t\oplus\mathcal{D}^\perp_t,$$
	but in principle the splitting might depend on $t$. We will now show that this is not the case, and that $\mathcal{D}_t, \mathcal{D}_t^\perp$ are indeed time-independent. To proceed, let us consider the space-time manifold $\overline{\M}:=\M^m\times[0,\tmax)$. Moreover, let $\mathcal{D}$,
	$\mathcal{D}^\perp$ denote the subbundles of $\tmax\overline{\M}$ with fibers
	$$\mathcal{D}_{(x,t)}=(\mathcal{D}_t)_x\oplus 0,\quad \mathcal{D}^\perp_{(x,t)}=(\mathcal{D}^\perp_t)_x\oplus 0,\quad\text{ for all }(x,t)\in \M^m\times[0,\tmax).$$
	To prove that $\mathcal{D}_t$ and $\mathcal{D}^\perp_t$ are time-independent we need to show that they are invariant with respect to covariant derivative in time, i.e. we need to show that
	$$\nabla_{\dt} W\in\Gamma(\mathcal{D}),\,\text{ for all }W\in\Gamma(\mathcal{D}),\quad \text{and} \quad \nabla_{\dt} U\in\Gamma(\mathcal{D}^\perp),\,\text{ for all }U\in\Gamma(\mathcal{D}^\perp),$$
	so that the result follows from \textsc{de Rhams} decomposition theorem.
	Choose arbitrary $V_1,V_2\in\Gamma(\tmax\M)$ and $W\in\Gamma(\mathcal{D})$. Then $\nabla_{V_1}W\in\Gamma(\mathcal{D})$, so that
	$$A(W,\cdot)=A(\nabla_{V_1}W,\cdot)=0$$
	and
	\begin{eqnarray}
	(\nabla_{V_1}A)(W,V_2)=V_1\bigl(A(W,V_2)\bigr)-A(\nabla_{V_1}W,V_2)-A(W,\nabla_{V_1}V_2)=0.\label{invol 5}
	\end{eqnarray}
	From \textsc{Codazzi}'s equation we obtain for all $w\in\mathcal{D}$, we have
	\begin{equation}\label{invol 6}
	\nabla_w\H=\sum_{k=1}^m(\nabla_wA)(e_k,e_k)\overset{\textsc{Codazzi}}{=}\sum_{k=1}^m(\nabla_{e_k}A)(w,e_k)\overset{(\ref{invol 5})}{=}0.
	\end{equation}
	Since
	$$\sind(V,W)=\langle\D\F(V),\D\F(W)\rangle_{\real{m,2}}\quad\text{and}\quad \dt \F=\H$$
	we compute for $V\in\Gamma(\tmax\M)$, $W\in\Gamma(\mathcal{D})$, that
	$$\nabla_{\dt}\!\!\sind(V,W)=\langle\nabla_V\H,\D\F(W)\rangle_{\real{m,2}}+\langle\nabla_W\H,\D\F(V)\rangle_{\real{m,2}}\overset{(\ref{invol 6})}{=}\langle\nabla_V\H,\D\F(W)\rangle_{\real{m,2}}.$$
	Now
	$$\langle\nabla_V\H,\D\F(W)\rangle_{\real{m,2}}=V\langle \H,\D\F(W)\rangle_{\real{m,2}}-\langle \H,A(V,W)\rangle_{\real{m,2}}.$$
	Equation \eqref{mixeda} and $W\in\Gamma(\mathcal{D})$ imply that
	$$\langle \H,\D\F(W)\rangle_{\real{m,2}}=0.$$
	From \eqref{invol 1} we see that
	$$\langle \H,A(V,W)\rangle_{\real{m,2}}=0,$$
	so that
	\begin{equation}\nonumber
	\nabla_{\dt}\!\!\sind(v,w)=0,\quad\text{ for all }v\in \tmax\M,\,w\in\mathcal{D}.
	\end{equation}
	Proceeding as in part (i) it follows that
	$$\gind \Bigl(W,\nabla_{\dt}U-\sind^\sharp(\nabla_{\dt}U)\Bigr)=0,\text{ for all }W\in\Gamma(\mathcal{D}), U\in\Gamma(\mathcal{D}^\perp),$$
	which again implies
	$$\nabla_{\dt}U\in\Gamma(\mathcal{D}^\perp).$$
	Likewise we derive
	$$\nabla_{\dt}W\in\Gamma(\mathcal{D}),\quad\text{for all}\quad W\in\Gamma(\mathcal{D}).$$ 
Therefore the splitting of $\tmax\M$ is global and time-independent, and the aforementioned 
isomorphism $L_t:\complex{}\times\real{m-2}\to\real{2m}$ is time-independent as well. This implies now that the symplectomorphisms $\phi_t$ 
evolve by the Lagrangian mean curvature flow, and the proof is completed.
$\hfill \square$

If $\f$ is area decreasing but not uniformly area decreasing, $\pind$ might satisfy some lower  bounds in terms of $\vert\x\vert$. We will now prove that certain bounds for $\pind$ are preserved.
\begin{proposition}[Decay estimates for $\pind$]\label{lemm 4}
	Let $\M_t$ be a smooth solution of \eqref{mcf} with bounded gradient and bounded curvature on each $\M_t$. Suppose $\M_0$ is the graph of an area decreasing map with
	$$\pind\ge  e^{\displaystyle-\varmine(\vert\x\vert^2)},$$
	where $\varmine:[0,\infty)\to\real{}$ is a smooth function such that for some constant $c\ge 0$ and all $s>0$ we have
	$$0\le\varmine'(s)\le \frac{c}{\sqrt{s}},\quad{ and }\quad \varmine''(s)\le 0.$$
	Then $\M_t$ is the graph of an area decreasing map with
	\begin{equation}
	\pind\ge e^{\displaystyle-\varmine(\vert\x\vert^2+2mt)}.\nonumber
	\end{equation}
\end{proposition}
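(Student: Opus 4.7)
My plan is to work with $\qind:=\sqrt{\pind}$, since Lemma \ref{evolpind} provides for this quantity a clean drift-free lower bound that pairs readily with an explicit barrier.

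The initial hypothesis $\pind\ge e^{-\varmine(|\x|^2)}>0$ makes $\f$ strictly area decreasing at $t=0$, so Theorem \ref{weakly} guarantees $\pind>0$ on $\M\times[0,\tmax)$ and $\qind$ is smooth there. By Lemma \ref{evolpind},
$$\Bigl(\dt-\Delta\Bigr)\qind\ge\qind|A|^2\ge 0.$$
As a barrier I take $\varmineb:=e^{-\varmine(\upeta)/2}$ with $\upeta:=|\x|^2+2mt$, which is the auxiliary quantity of \eqref{defeta} with $\uplambda\equiv 0$. Applying Lemma \ref{lemma 3.1} to the smooth function $\varmineb$ of $\upeta$ yields
$$\Bigl(\dt-\Delta\Bigr)\varmineb=2|\nabla\f|^2\,\varmineb'(\upeta)-|\nabla\upeta|^2\,\varmineb''(\upeta),$$
and from $\varmineb'(\upeta)=-\tfrac{1}{2}\varmine'(\upeta)\,\varmineb$ and $\varmineb''(\upeta)=\bigl(\tfrac{1}{4}(\varmine'(\upeta))^2-\tfrac{1}{2}\varmine''(\upeta)\bigr)\varmineb$, together with the sign conditions $\varmine'\ge 0$ and $\varmine''\le 0$, both summands on the right are non-positive, giving
$$\Bigl(\dt-\Delta\Bigr)\varmineb\le 0.$$

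Subtracting the two inequalities, the difference $v:=\varmineb-\qind$ is a heat subsolution, $(\partial_t-\Delta)v\le 0$, and by hypothesis $v|_{t=0}\le 0$. Because $\qind,\varmineb\in(0,1]$ the function $v$ is uniformly bounded, so the maximum principle of Proposition \ref{prop 1} (applied with $\a\equiv 0$, or equivalently the positive-part argument used in the proof of Proposition \ref{prop 2}) gives $v\le 0$ on $\M\times[0,\tmax)$. Squaring the resulting inequality $\qind\ge\varmineb$ then produces $\pind\ge e^{-\varmine(|\x|^2+2mt)}$, which is the claimed estimate.

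\textbf{Where the difficulty lies.} The hands-on work in this plan is minimal once the drift-free estimate of Lemma \ref{evolpind} is in place, and the core ingredient behind that lemma is the identity \eqref{p4}, $2\pind\mathcal{B}=-|\nabla\pind|^2+\sum_k|A^\xi_{1k}\tind_{22}+A^\upeta_{2k}\tind_{11}|^2$, which allows the gradient term in the evolution of $\pind$ to be absorbed after the substitution $\qind=\sqrt{\pind}$. I note that this route does not use the decay hypothesis $\varmine'(s)\le c/\sqrt{s}$: only the concavity $\varmine''\le 0$ and the monotonicity $\varmine'\ge 0$ are needed. That decay condition would, however, become indispensable in an alternative proof working directly with $\pind$ and the drift $\a$ of Lemma \ref{lemma 4.4}, where the mixed term $\varmine'(\upeta)\langle\a,\nabla\upeta\rangle$ is the main obstruction and can only be controlled by invoking $\varmine'(\upeta)|\nabla\upeta|\le 2c$ together with a Young inequality against $2\pind|A|^2$.
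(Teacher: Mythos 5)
Your proof is correct and is a genuinely different (and cleaner) route than the paper's. The paper works with the quotient-type quantity $\qind e^{\varmine/2}$, which can be unbounded and whose evolution acquires a drift term $\nabla\varmine$; to apply Proposition \ref{prop 1} the paper therefore needs the hypothesis $\varmine'(s)\le c/\sqrt{s}$, which ensures $|\nabla\varmine|^2\le 4\upeta(\varmine')^2\le 4c^2$. You instead compare $\qind$ against the bounded barrier $\varmineb=e^{-\varmine(\upeta)/2}$ by taking their \emph{difference}: Lemma \ref{lemma 3.1} plus $\varmine'\ge0$, $\varmine''\le0$ give $(\dt-\Delta)\varmineb\le0$, inequality \eqref{p10} gives $(\dt-\Delta)\qind\ge0$, and $v:=\varmineb-\qind$ is a drift-free bounded heat subsolution with $v|_{t=0}\le0$, so Proposition \ref{prop 1} (with $\a\equiv0$) yields $v\le0$. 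The invocation of Theorem \ref{weakly} to secure $\pind>0$ (and hence smoothness of $\qind$) matches the paper.

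Your observation that this route makes the decay condition $\varmine'(s)\le c/\sqrt{s}$ superfluous is correct and interesting: it is exactly what the paper uses to bound the drift vector $\nabla\varmine$, and by passing to the bounded difference you eliminate the drift entirely. In exchange the paper's multiplicative barrier is structurally parallel to its other estimates (e.g.\ Lemma \ref{lemma 5.1}, Corollary \ref{mean int}), which always package the unknown against $\pind$ or $\qind$ in ratio form. One small correction to your closing remark: the paper's own proof does not run through the drift $\a$ of Lemma \ref{lemma 4.4} but rather the gradient of $\varmine$ itself; in either case, though, your diagnosis of where the decay hypothesis enters --- controlling an unbounded drift that your additive barrier never produces --- is accurate.
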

\begin{proof}
	In the definition of $\upeta$ in \eqref{defeta} we set $\uplambda(t)=0$ so that
	$$\upeta=\vert\x\vert^2+2mt\ge 0.$$
	In the sequel we will simply write $\varmine$ instead of $\varmine\circ\upeta$. From Theorem \ref{weakly} follows that $\M_t$ is the graph of an area decreasing map. Thus $\pind>0$, and the function $\qind=\sqrt{\pind}$ is well defined and smooth. From
	\begin{eqnarray}
	\left(\dt-\Delta\right) \qind e^{\varmine/2}&=&e^{\varmine/2}\left(\dt-\Delta\right)\qind+\qind\left(\dt-\Delta\right)e^{\varmine/2}-2\langle\nabla\qind,\nabla e^{\varmine/2}\rangle,\nonumber
	\end{eqnarray}
	and from Lemma \ref{lemma 3.1}, and \eqref{p10} follows the estimate
	\begin{eqnarray}
	&\ge&\qind e^{\varmine/2}\vert A\vert^2+\frac{1}{2}\qind e^{\varmine/2}\left(\dt-\Delta\right)\varmine-\frac{1}{4}\qind e^{\varmine/2}\vert\nabla\varmine\vert^2-e^{\varmine/2}\langle\nabla\qind,\nabla \varmine\rangle\nonumber\\
	&{\ge}&\qind e^{\varmine/2}\vert A\vert^2-\langle\nabla(\qind e^{\varmine/2}),\nabla\varmine\rangle+\frac{1}{4}\qind e^{\varmine/2}\vert\nabla\varmine\vert^2\ge-\langle\nabla(\qind e^{\varmine/2}),\nabla\varmine\rangle.\nonumber
	\end{eqnarray}
	The result then is a consequence of the maximum principle in Proposition \ref{prop 1} since
	$$\vert\nabla\varmine\vert^2=(\varmine'(\upeta))^2\vert\nabla\upeta\vert^2\le4\upeta(\varmine'(\upeta))^2,
	$$
and
$$s(\varmine'(s))^2\le c.$$
This completes the proof.
\end{proof}

As we will now see, this proposition implies that polynomial and exponential decay rates for $\pind$ are preserved and that uniformly area decreasing maps stay uniformly area decreasing.
\begin{corollary}\label{area uni}
	Under the assumptions made in Lemma \ref{lemm 4} assume that either one of the following estimates holds on $\M_0\,$:
	$$\text{\rm(a)}\quad\pind\ge \varepsilon e^{-\sigma\sqrt{\vert\x\vert^2+1}},\quad\text{\rm(b)}\quad\pind\ge \frac{\varepsilon}{(\vert\x\vert^2+1)^k},\quad\text{\rm(c)}\quad\pind\ge \frac{\varepsilon}{\ln(\vert\x\vert^2+1+\sigma)},$$
	where $\varepsilon>0$, $\sigma> 0$, $k\ge 0$ are constants. Then for $t>0$ we have
	\begin{gather*}
	\text{\rm(a)}\quad\pind\ge \varepsilon e^{-\sigma\sqrt{\vert\x\vert^2+1+2mt}},\quad\text{\rm(b)}\quad\pind\ge \frac{\varepsilon}{(\vert\x\vert^2+1+2mt)^k},\\
	\text{\rm(c)}\quad\pind\ge \frac{\varepsilon}{\ln(\vert\x\vert^2+1+\sigma+2mt)}.
	\end{gather*}
	In particular, the uniformly area decreasing property $\pind\ge\varepsilon>0$ is preserved.
\end{corollary}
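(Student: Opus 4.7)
The plan is to apply Proposition \ref{lemm 4} three times, each time choosing an appropriate function $\varmine$ so that the lower bound for $\pind$ on $\M_0$ can be written in the form $e^{-\varmine(|\x|^2)}$. Once the hypotheses on $\varmine$ (namely $0\le\varmine'(s)\le c/\sqrt{s}$ and $\varmine''(s)\le 0$) are verified, the conclusion $\pind\ge e^{-\varmine(|\x|^2+2mt)}$ on $\M_t$ is exactly the asserted decay estimate. The very last statement, about uniform area decrease being preserved, corresponds to the trivial choice $\varmine\equiv-\ln\varepsilon$, for which both derivative conditions hold vacuously.

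For case (a) I would set $\varmine(s):=\sigma\sqrt{s+1}-\ln\varepsilon$, so that $e^{-\varmine(s)}=\varepsilon\, e^{-\sigma\sqrt{s+1}}$. Then $\varmine'(s)=\sigma/(2\sqrt{s+1})$, which is nonnegative and bounded by $\sigma/(2\sqrt{s})$ for $s>0$, while $\varmine''(s)=-\sigma/\bigl(4(s+1)^{3/2}\bigr)\le 0$. For case (b), take $\varmine(s):=k\ln(s+1)-\ln\varepsilon$, giving $e^{-\varmine(s)}=\varepsilon/(s+1)^k$. Then $\varmine'(s)=k/(s+1)$ and $\varmine''(s)=-k/(s+1)^2\le 0$; the bound $\varmine'(s)\le c/\sqrt{s}$ follows from $\sqrt{s}/(s+1)\le 1/2$, so $c=k/2$ works. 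For case (c), set $\varmine(s):=\ln\ln(s+1+\sigma)-\ln\varepsilon$, which is well defined because $s+1+\sigma>1$. Then $e^{-\varmine(s)}=\varepsilon/\ln(s+1+\sigma)$; a direct computation yields
\[
\varmine'(s)=\frac{1}{(s+1+\sigma)\ln(s+1+\sigma)},\qquad \varmine''(s)=-\frac{\ln(s+1+\sigma)+1}{\bigl((s+1+\sigma)\ln(s+1+\sigma)\bigr)^2},
\]
so $\varmine''\le 0$ and the function $\sqrt{s}\,\varmine'(s)$ is continuous on $(0,\infty)$, bounded near $0$ (since the denominator is bounded below by $(1+\sigma)\ln(1+\sigma)>0$) and tends to $0$ as $s\to\infty$, hence is uniformly bounded by some constant $c$.

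With these choices the hypotheses of Proposition \ref{lemm 4} are met in each case, and substituting $\upeta=|\x|^2+2mt$ into the resulting bound $\pind\ge e^{-\varmine(\upeta)}$ yields precisely the three inequalities claimed. The only slightly delicate point, which I would treat as the main technical obstacle, is verifying the estimate $s\bigl(\varmine'(s)\bigr)^2\le c$ in case (c), where the function $\varmine'$ behaves like $1/(s\ln s)$ for large $s$; one must check continuity at $s=0$ separately (using $\sigma>0$), and then observe that $\sqrt{s}/(s\ln s)\to 0$ as $s\to\infty$ to conclude boundedness on all of $(0,\infty)$. The uniformly area decreasing case falls out as the degenerate instance $\varmine=\mathrm{const}$, completing the proof.
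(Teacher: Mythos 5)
Your proof is correct and follows essentially the same route as the paper's: applying Proposition~\ref{lemm 4} with explicit choices of $\varmine$ for each case and verifying the hypotheses $\varmine'\ge 0$, $\varmine'(s)\le c/\sqrt{s}$, $\varmine''\le 0$, with the uniform case obtained as a degenerate instance. Your verifications are more detailed than the paper's (which states the conclusion without computation), and your formula $\varmine_{\rm b}(s)=k\ln(s+1)-\ln\varepsilon$ is the one that actually matches the claimed bound in case (b), whereas the paper's $k\ln\sqrt{s+1}-\ln\varepsilon$ appears to carry a stray factor of $1/2$.
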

\begin{proof}
	The functions $\varmine_{\rm a}(s):=\sigma{\sqrt{s+1}}-\ln \varepsilon$, $\varmine_{\rm b}(s):=k\ln{\sqrt{s+1}}-\ln \varepsilon$, and $\varmine_{\rm c}(s)=\ln(\ln(s+1+\sigma))-\ln \varepsilon$
	satisfy all assumptions in Lemma \ref{lemm 4}. Choosing $k=0$ in example (b) implies that uniformly area decreasing maps stay uniformly area decreasing with the same lower bound for $\pind$.
\end{proof}

\section{Longtime existence}\label{sec5}
From now on we shall only consider the case where $\M_0$ is the graph of a uniformly area decreasing map in codimension two, i.e. we assume that for some fixed constant $\varepsilon>0$ the inequality
\begin{equation}\label{uniform}
\pind\ge\varepsilon
\end{equation}
holds on all of $\M_0$. In addition we shall always assume that $\vert\nabla^\ell A\vert$ is uniformly bounded on $\M_0$ for all $\ell\ge 0$. Since $\pind\ge\varepsilon$ implies that $\vert\D\f\vert<\infty$ the short time existence result in Proposition \ref{ste} and Corollary \ref{area uni} then ensure that \eqref{uniform} remains valid for $t> 0$, as long as the second fundamental form remains bounded on each $\M_t$. The purpose of this section is to obtain a priori estimates for the curvature and its covariant derivatives depending only on the initial conditions, and that will guarantee longtime existence. We start with an estimate for the mean curvature.

\noindent
\begin{lemma}\label{lemma 5.1}
	For $\pind>0$ the function ${\vert\H\vert^2}{\pind}^{-1}$ satisfies
	\begin{equation*}
	\left(\dt-\Delta\right)\frac{\vert\H\vert^2}{\pind}\le\frac{1}{\pind}\Bigl\langle\nabla\pind,\nabla\frac{\vert\H\vert^2}{\pind}\Bigr\rangle.
	\end{equation*}
\end{lemma}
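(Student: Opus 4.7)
The plan is to treat the ratio $|\H|^2/\pind$ by the standard quotient rule for the heat operator, and to combine three ingredients: the evolution of $|\H|^2$ from Lemma \ref{lemma 2.2}, the differential inequality $(\partial_t-\Delta)\qind\ge\qind|A|^2$ from \eqref{p10} for $\qind:=\sqrt{\pind}$, and the pointwise bound $|A^\H|^2\le|\H|^2|A|^2$. Writing $\pind=\qind^2$ and $w=|\H|^2=f\qind^2$ with $f:=|\H|^2/\pind$, I would first derive the identity
$$
\qind^2(\partial_t-\Delta)f=(\partial_t-\Delta)w-f(\partial_t-\Delta)(\qind^2)+2\langle\nabla f,\nabla(\qind^2)\rangle
$$
by expanding $\partial_tw$ and $\Delta w$ via the product rule.

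Next I would feed in the three ingredients. From Lemma \ref{lemma 2.2}, $(\partial_t-\Delta)w=-2|\nabla^\perp\H|^2+2|A^\H|^2$, and from \eqref{p10}, $(\partial_t-\Delta)(\qind^2)=2\qind(\partial_t-\Delta)\qind-2|\nabla\qind|^2\ge2\pind|A|^2-2|\nabla\qind|^2$. Substituting and using $2|A^\H|^2-2f\pind|A|^2=2|A^\H|^2-2|\H|^2|A|^2\le 0$ cancels the $|A|^2$ terms exactly, yielding
$$
\qind^2(\partial_t-\Delta)f\le-2|\nabla^\perp\H|^2+2f|\nabla\qind|^2+4\qind\langle\nabla f,\nabla\qind\rangle.
$$
Meanwhile, the desired right-hand side rewrites as $\frac{1}{\pind}\langle\nabla\pind,\nabla f\rangle=\frac{2}{\qind}\langle\nabla\qind,\nabla f\rangle$, so that after multiplying by $\qind^2$ the claim reduces to
$$
2\qind\langle\nabla f,\nabla\qind\rangle\le 2|\nabla^\perp\H|^2-2f|\nabla\qind|^2.
$$

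To obtain this, I would express the left-hand side in terms of $\nabla w$: since $\nabla w=\qind^2\nabla f+2f\qind\nabla\qind$, one has $2\qind\langle\nabla f,\nabla\qind\rangle=\frac{2}{\qind}\langle\nabla w,\nabla\qind\rangle-4f|\nabla\qind|^2$. So the reduction becomes
$$
\tfrac{2}{\qind}\langle\nabla w,\nabla\qind\rangle\le 2|\nabla^\perp\H|^2+2f|\nabla\qind|^2.
$$
This is pure Cauchy--Schwarz plus Young: the Kato-type bound $|\nabla|\H|^2|\le 2|\H||\nabla^\perp\H|$ gives
$$
\tfrac{2}{\qind}|\langle\nabla w,\nabla\qind\rangle|\le\tfrac{4}{\qind}|\H||\nabla^\perp\H||\nabla\qind|\le 2|\nabla^\perp\H|^2+\tfrac{2|\H|^2}{\qind^2}|\nabla\qind|^2,
$$
and $|\H|^2/\qind^2=f$ closes the argument.

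The main obstacle is the bookkeeping in the second step: one has to see that the "bad" curvature terms $2|A^\H|^2$ from the numerator and $-2\pind f|A|^2$ from the denominator line up to cancel precisely, and that the leftover gradient terms are exactly what Young's inequality can absorb once $\nabla|\H|^2$ is rewritten using Kato. Beyond these two observations the computation is routine.
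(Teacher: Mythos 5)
Your proposal is correct and follows essentially the same route as the paper: both compute the quotient rule for $\vert\H\vert^2/\pind$, insert the evolution inequality \eqref{p11} (which is the $\pind=\qind^2$ version of your \eqref{p10} step), cancel the curvature terms via $\vert A^{\H}\vert^2\le\vert\H\vert^2\vert A\vert^2$, and then absorb the remaining gradient terms using the Kato bound $\vert\nabla\vert\H\vert^2\vert\le 2\vert\H\vert\vert\nabla^\perp\H\vert$. The only cosmetic difference is in the last step: you invoke Young's inequality directly, whereas the paper packages the same estimate as the observation that $0\le\vert\nabla(\vert\H\vert^2/\pind)\vert^2$ together with the Cauchy--Schwarz bound \eqref{mean est} — these are the same computation in different clothes.
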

\begin{proof}
From the \textsc{Cauchy-Schwarz} inequality we get
\begin{equation}\label{mean est}
\vert\nabla\vert\H{\vert^2\vert^2\le }4\vert\H\vert^2\vert\nabla^\perp\H\vert^2,\quad \vert A^{\H}\vert^2\le\vert\H\vert^2\vert A\vert^2.
\end{equation}
If $\pind >0$, Lemma \ref{evolpind} and \eqref{p4} imply 
\begin{equation}\label{p11}
\left(\dt-\Delta\right)\pind\ge 2\pind\vert A\vert^2-\frac{1}{2\pind}\vert\nabla\pind\vert^2.
\end{equation}
Then Lemma \ref{lemma 2.2} and 
\begin{eqnarray}
\left(\dt-\Delta\right)\frac{\vert\H\vert^2}{\pind}&=&\frac{1}{\pind}\left(\dt-\Delta\right)\vert\H\vert^2-\frac{\vert\H\vert^2}{\pind^2}\left(\dt-\Delta\right)\pind +\frac{2}{\pind}\left\langle\nabla\pind,\nabla\frac{\vert\H\vert^2}{\pind}\right\rangle\nonumber
\end{eqnarray}
lead to
\begin{eqnarray}
\left(\dt-\Delta\right)\frac{\vert\H\vert^2}{\pind}&\le&-\frac{2}{\pind}\vert\nabla^\perp\H\vert^2+2\frac{\vert\H\vert^2}{\pind}\vert A\vert^2-\frac{\vert\H\vert^2}{\pind^2}\left(2\pind\vert A\vert^2-\frac{1}{2\pind}\vert\nabla\pind\vert^2\right)\nonumber\\
&&+\frac{2}{\pind}\left\langle\nabla\pind,\nabla\frac{\vert\H\vert^2}{\pind}\right\rangle\nonumber\\
&=&-\frac{2}{\pind}\vert\nabla^\perp\H\vert^2+\frac{1}{2}\,\frac{\vert\H\vert^2\vert\nabla\pind\vert^2}{\pind^3}+\frac{2}{\pind}\left\langle\nabla\pind,\nabla\frac{\vert\H\vert^2}{\pind}\right\rangle\nonumber\\
&\le&\frac{1}{\pind}\left\langle\nabla\pind,\nabla\frac{\vert\H\vert^2}{\pind}\right\rangle,\nonumber
\end{eqnarray}
where the last step follows from
\begin{equation}\nonumber
\frac{2}{\pind}\vert\nabla^\perp\H\vert^2-\frac{1}{2}\,\frac{\vert\H\vert^2\vert\nabla\pind\vert^2}{\pind^3}-\frac{1}{\pind}\left\langle\nabla\pind,\nabla\frac{\vert\H\vert^2}{\pind}\right\rangle\ge 0,
\end{equation}
because
\begin{eqnarray}
0\le\left\vert\nabla\frac{\vert\H\vert^2}{\pind}\right\vert^2&=&\frac{\vert\nabla\vert \H\vert^2\vert^2-4\vert\H\vert^2\vert\nabla^\perp\H\vert^2}{\pind^2}\nonumber\\
&&+2\frac{\vert\H\vert^2}{\pind}\left(\frac{2}{\pind}\vert\nabla^\perp\H\vert^2-\frac{1}{2}\,\frac{\vert\H\vert^2\vert\nabla\pind\vert^2}{\pind^3}-\frac{1}{\pind}\left\langle\nabla\pind,\nabla\frac{\vert\H\vert^2}{\pind}\right\rangle\right)\nonumber\\
&\overset{(\ref{mean est})}{\le}&2\frac{\vert\H\vert^2}{\pind}\left(\frac{2}{\pind}\vert\nabla^\perp\H\vert^2-\frac{1}{2}\,\frac{\vert\H\vert^2\vert\nabla\pind\vert^2}{\pind^3}-\frac{1}{\pind}\left\langle\nabla\pind,\nabla\frac{\vert\H\vert^2}{\pind}\right\rangle\right),\nonumber
\end{eqnarray}
and $\nabla(\vert\H\vert^2\pind^{-1})$ vanishes if $\H=0$.
\end{proof}
\begin{corollary}\label{coro huni}
	Let $\M_t$ be a smooth solution of \eqref{mcf} with bounded curvature on each $\M_t$, and suppose $\M_0$ is the graph of a uniformly area decreasing map. Then
	$$\sup_{\M_t}\frac{\vert\H\vert^2}{\pind}\le \sup_{\M_0}\frac{\vert\H\vert^2}{\pind}.$$
\end{corollary}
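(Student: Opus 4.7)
My plan is to apply the Ecker--Huisken maximum principle, Proposition \ref{prop 1}, directly to the differential inequality provided by Lemma \ref{lemma 5.1}. Writing $u:=|\H|^2/\pind$, Lemma \ref{lemma 5.1} rearranges into
$$\left(\dt-\Delta\right)u\le\langle\a,\nabla u\rangle,\qquad\a:=\frac{\nabla\pind}{\pind},$$
so everything reduces to verifying the hypotheses of Proposition \ref{prop 1} on any compact time interval $[0,t_1]\subset[0,\tmax)$.

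First I need a uniform positive lower bound for $\pind$: since $\M_0$ is the graph of a \emph{uniformly} area decreasing map, $\pind\ge\varepsilon>0$ on $\M_0$, and Corollary \ref{area uni} (case (b) with $k=0$) shows this lower bound is propagated by the flow as long as the solution exists smoothly with bounded curvature. Second, I need a uniform upper bound for $|\nabla\pind|$. The gradient formula in Lemma \ref{evolpind},
$$\nabla_{e_k}\pind=A^\xi_{1k}\tind_{11}+A^\upeta_{2k}\tind_{22},$$
together with $\tind_{kk}^2\le\sind_{kk}^2+\tind_{kk}^2=1$, yields the pointwise bound $|\nabla\pind|\le 2|A|$. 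Since $|A|$ is bounded on each $\M_t$, and standard parabolic regularity (cf.\ Proposition \ref{longtime}) promotes this to a uniform bound on the slab $\M^m\times[0,t_1]$, I conclude $|\a|\le 2|A|/\varepsilon$ is uniformly bounded there.

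Third, I check finiteness of the initial supremum: the hypothesis $\sup_{\M_0}|\nabla^\ell A|<\infty$ for all $\ell\ge 0$ gives $\sup_{\M_0}|\H|^2<\infty$, and combined with $\pind\ge\varepsilon$ this yields $\sup_{\M_0}u<\infty$. With all three ingredients in hand, Proposition \ref{prop 1} applied to $u$ on $[0,t_1]$ gives $\sup_{\M_t}u\le\sup_{\M_0}u$ for all $t\in[0,t_1]$. Since $t_1<\tmax$ was arbitrary, the estimate holds throughout the life of the flow.

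The only delicate point is the quantitative passage from "bounded curvature on each $\M_t$" to uniform boundedness on $\M^m\times[0,t_1]$, but this is routine given smoothness of the solution and the a priori estimates of Section \ref{sec2}; all the genuinely new analytic content is already contained in Lemma \ref{lemma 5.1}, whose careful algebraic manipulation produces precisely the drift form $\langle\a,\nabla u\rangle$ compatible with the noncompact maximum principle.
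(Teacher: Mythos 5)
Your proof is correct and follows essentially the same route as the paper: apply Proposition \ref{prop 1} to $u=\vert\H\vert^2/\pind$ using the drift form supplied by Lemma \ref{lemma 5.1}, with the lower bound on $\pind$ coming from Corollary \ref{area uni} and the bound on $\vert\nabla\pind\vert$ in terms of $\vert A\vert$ read off from the gradient formula in Lemma \ref{evolpind}. The paper records the slightly sharper pointwise estimate $\vert\nabla\pind\vert^2\le 2\vert A\vert^2$ where you wrote $\vert\nabla\pind\vert\le 2\vert A\vert$, but either suffices since $\vert A\vert$ is bounded on the relevant time slab.
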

\begin{proof}
	It follows from Corollary \ref{area uni} that $\M_t$ stays uniformly area decreasing and
	$$\inf_{\M_t}\pind\ge\inf_{\M_0}\pind>0.$$
	Since $\tind_{11}^2,\tind_{22}^2\le 1$ one has
	$$\vert\nabla\pind\vert^2=\sum_{k=1}^m\bigl(A^\xi_{1k}\tind_{11}+A^\eta_{2k}\tind_{22}\bigr)^2\le 2\vert A\vert^2.$$
	Therefore the gradient $\nabla\ln\pind$ is bounded on each finite time interval and we may apply Proposition \ref{prop 1} to the inequality in Lemma \ref{lemma 5.1} to conclude the result.
\end{proof}
\begin{theorem}[Longtime solution]\label{lte}
	Suppose $\M_0$ is the graph of a smooth and uniformly area decreasing map $\f:\real{m}\to\real{2}$ with $\sup_{\M_0}\vert\nabla^{\ell} A\vert$ bounded for all $\ell\ge 0$. Then there exists a unique smooth solution of \eqref{mcf} for all time such that each $\M_t$ is the graph of a uniformly area decreasing map satisfying \eqref{uniform} with the same constant $\varepsilon>0$, and 
	$$\sup_{\M_t}\vert\nabla^{\ell} A\vert\le C(\ell), \,\text{ for all } \ell\ge 0, t\in[0,\infty),$$
	where for each $\ell>0$ the constants $C(\ell)$ depend only on $\varepsilon$, $C(0)$, and $\sup_{\M_0}\vert\nabla^{j} A\vert$, $1\le j\le \ell$.
\end{theorem}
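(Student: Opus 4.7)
The strategy is a standard continuation argument built on the a priori estimates already compiled, with the substantive work being a uniform bound on $|A|$. Let $[0,\tmax)$ be the maximal interval furnished by Proposition \ref{ste}, and assume toward a contradiction that $\tmax<\infty$.

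On $[0,\tmax)$, Corollary \ref{area uni} (case (b) with $k=0$) preserves $\pind\ge\varepsilon$. In terms of the singular values this reads $1-\lambda^2\mu^2\ge\varepsilon(1+\lambda^2)(1+\mu^2)$, which after expanding yields $|\D\f|^2=\lambda^2+\mu^2\le\varepsilon^{-1}-1$, so the flow stays graphical and uniformly $C^1$-bounded with a constant depending only on $\varepsilon$; in particular \eqref{mcf nonp} stays uniformly parabolic. Combined with $\pind\le 1$, Corollary \ref{coro huni} then gives the uniform bound
$$\sup_{\M_t}|\H|^2\le\sup_{\M_0}\frac{|\H|^2}{\pind}=:H_0^2,$$
depending only on initial data and $\varepsilon$.

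The main obstacle is to promote this control on $|\H|$ to a uniform bound on the full second fundamental form $|A|$ on $[0,\tmax)$. I expect this to be achieved by deriving an evolution inequality for a quantity of the form $|A|^2\Phi(\pind)^{-1}$, in the same spirit as the treatment of $|\H|^2\pind^{-1}$ in Lemma \ref{lemma 5.1}, but genuinely exploiting the area-decreasing structure in codimension two. The delicate point is that the reaction terms for $|A|^2$ recorded in Lemma \ref{lemma 2.2} and Remark (together with \eqref{secest}) are only bounded by $3|A|^4$, and this must be absorbed by the good terms $2\pind|A|^2$ appearing in \eqref{p11} weighted by the right power of $\pind$, after which one applies the maximum principle of Proposition \ref{prop 1} (or Proposition \ref{prop 2}) whose drift is uniformly bounded because $\nabla\ln\pind$ is controlled by $|A|$ and $\pind\ge\varepsilon$. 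The output is a uniform bound $\sup_{\M^m\times[0,\tmax)}|A|\le C(0)$ with $C(0)$ depending only on $\varepsilon$, $H_0$ and $\sup_{\M_0}|A|$.

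With $|A|$ in hand, Proposition \ref{longtime} immediately yields, for each $\ell\ge 1$, an a priori bound $\sup_{\M_t}|\nabla^\ell A|\le C(\ell)$ with the stated dependence. Together with the uniform $C^1$-bound on $\f$, this implies that the embeddings $\F_t$ converge smoothly on compact subsets as $t\nearrow\tmax$ to a smooth entire graph $\M_{\tmax}$ satisfying the same hypotheses as $\M_0$, so reapplying Proposition \ref{ste} extends the solution strictly past $\tmax$, contradicting maximality and forcing $\tmax=\infty$. Uniqueness within the class of solutions having bounded gradient and bounded curvature on each time slice follows from the standard energy/maximum-principle argument for uniformly parabolic quasilinear systems, once $|\D\f|$ and $|A|$ are controlled.
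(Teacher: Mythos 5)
Your reduction to a uniform bound on $|A|$ is on the right track, and the preliminary steps (preservation of $\pind\ge\varepsilon$ via Corollary \ref{area uni}, the resulting uniform $C^1$ bound, and the mean curvature bound from Corollary \ref{coro huni}) are correct and match the paper. However, the mechanism you sketch for the crucial $|A|$ bound does not work in the generality claimed. You propose a maximum-principle argument on a quantity of the form $|A|^2\Phi(\pind)^{-1}$, hoping the good term $2\pind|A|^2$ from \eqref{p11} absorbs the $3|A|^4$ from \eqref{secest}. This is precisely the computation the paper carries out in the Appendix (Corollary \ref{coro 7.1}), and it is shown there to require $\varepsilon>1/9$: the factor $3$ (rather than $2$, as in codimension one) means the absorption only closes when $\pind$ is bounded below by that threshold. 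Since the theorem is claimed for every $\varepsilon>0$, this route cannot complete the proof as stated, and you correctly flag this as the delicate point without resolving it.

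The paper's actual argument is of a different nature. It is a blow-up contradiction argument: if $|A|$ were not uniformly bounded on the maximal interval, one performs the parabolic rescaling of \cite{CCH12} at a sequence of almost-maximal curvature points $(x_k,t_k)$. Proposition \ref{longtime} together with the scale-invariance of the uniform area-decreasing condition gives uniform bounds on all $|\nabla^\ell A_{\lambda_k}|$, so Arzela--Ascoli produces a smooth limiting entire graph with $|A_\infty(0)|\ge 1/2$. Because $|\H|$ is uniformly bounded (your Corollary \ref{coro huni} step) and the scaling factors $\lambda_k\to\infty$, the rescaled mean curvatures go to zero, so the limit is a \emph{minimal}, uniformly area decreasing entire graph that is not totally geodesic --- contradicting the Bernstein theorem of Assimos--Jost \cite[Theorem 1.1]{AJ18}. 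That compactness-plus-Liouville step, rather than a pointwise maximum principle, is what makes the result hold for all $\varepsilon>0$; it is the missing idea in your proposal. Your final continuation argument (extend past $\tmax$ once uniform $C^\ell$ bounds are in place) and uniqueness remarks are consistent with Proposition \ref{ste} and Proposition \ref{longtime}.
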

\begin{proof}
	The existence of a smooth solution on a maximal interval $[0,\tmax)$, $0<\tmax\le\infty$, follows from Proposition \ref{ste}. By the same proposition we conclude that there exists a maximal time $\tmax_{\max}$, $0<\tmax_{\max}\le\tmax$, such that $\M_t$ has bounded curvature on each $\M_t$, for all $t\in[0,\tmax_{\max})$. Then Proposition \ref{area uni} implies that
	$\inf_{\M_t}\pind\ge\varepsilon:=\inf_{\M_0}\pind>0$
	for all $t\in[0,\tmax_{\max})$, and from Corollary \ref{coro huni} follows that the mean curvature $\H$ is uniformly bounded on $[0,\tmax_{\max})$. We claim that $\vert A\vert$ is uniformly bounded on $[0,\tmax_{\max})$ as well. We argue by contradiction. Suppose $\vert A\vert$ is not uniformly bounded on $[0,\tmax_{\max})$. By definition of $\tmax_{\max}$ we have
	$$\lambda_k:=\sup_{\M\times[0,t_k]}\vert A\vert<\infty,\,\text{ for all }t_k<\tmax_{\max}.$$
	Thus there exists a sequence $\{(x_k,t_k)\}_{k\in\natural{}}\subset\M\times[0,\tmax_{\max})$ such that
	$$\vert A(x_k,t_k)\vert\ge \frac{\lambda_k}{2},\quad \lim_{k\to\infty}t_k=\tmax_{\max}.$$
	We now employ the same scaling as in \cite{CCH12} and define for each $k$ the parabolic rescaling at $(x_k,t_k)$ by\footnote{Note that here we have $\D u=\f$}
	$$\f_{\lambda_k}(y,s):=\lambda_k\Bigl(\f(x,t)-\f(x_k,t_k)\Bigr), \quad y:=\lambda_k(x-x_k),\quad s:=\lambda_k^2(t-t_k).$$
	Then for each fixed $k$, the graph of $\f_{\lambda_k}$ is again a solution of the graphical mean curvature flow on 
	the time interval $[-\lambda_k^2t_k,0]$ such that the rescaled second fundamental form $A_{\lambda_k}$ is 
	uniformly bounded from above by $1$, and $\vert A_{\lambda_k}(0,0)\vert\ge 1/2$, for all $k$. It follows from 
	Proposition \ref{longtime} and from the fact that $\f_{\lambda_k}$ is uniformly area decreasing with the same 
	constant $\varepsilon>0$ that $\sup_{\M_s}\vert\D\f_{\lambda_k}\vert$,
	$\sup_{\M_s}\vert\nabla^{\ell} A_{\lambda_k}\vert$ are uniformly bounded for all $\ell\ge 0$ and all $k$,
	$s\in[-\lambda_k^2t_k,0]$.  Hence by the \textsc{Arzela-Ascoli} theorem a subsequence of $\f_{\lambda_k}$ 
	converges locally uniformly on compact sets to a smooth limiting map $\f_{\infty}:\real{m}\to\real{2}$ with
	$|A_{\infty}(0)|\ge 1/2$.
	Since the rescaled mean curvature is given by
	$\H_{\lambda_k}(y,s)=\lambda_k^{-1}\H(x,t)$ 
	and $\H(x,t)$ is uniformly bounded it follows from $\lambda_k\to\infty$ that the mean curvature of the limit map
	$\f_{\infty}$ vanishes. Therefore $\f_{\infty}$ is a minimal and uniformly area decreasing map that is not totally 
	geodesic. This contradicts the \textsc{Bernstein} theorem in
	\cite[Theorem 1.1]{AJ18} 
	and proves that $A$ is indeed uniformly 
	bounded on $[0,\tmax_{\max})$. From Proposition \ref{longtime} and Proposition \ref{ste} we then conclude
	$\tmax_{\max}=\tmax=\infty$, and that for each $\ell\ge 0$ there exists a constant $C(\ell)$ such that
	$\sup_{\M_t}\vert\nabla^\ell A\vert\le C(\ell)$ holds uniformly for all $t\in[0,\infty)$. For $\ell>0$ the constants depend 
	on
	$\sup_{\M_0}\vert\nabla^j A\vert$, $1\le j\le \ell$ and on $\sup_{\M\times[0,\infty)}\vert A\vert$. This proves the 
	theorem.
\end{proof}

\section{Asymptotic convergence}\label{sec6}
In this section we address the behavior of longtime solutions of \eqref{mcf} for uniformly area decreasing maps. As we have seen in the previous section, longtime existence is guaranteed if the initial map $\f_0$ is uniformly area decreasing and all $C^\ell$-norms of $\f_0$, $\ell\ge 1$, are uniformly bounded. To describe the longtime behavior we need interior estimates. 
\begin{corollary}\label{mean int}
	Let $\M_t$ be a smooth solution of \eqref{mcf} with bounded curvature on each $\M_t$, and suppose $\M_0$ satisfies \eqref{uniform}. Then
	$$\sup_{\M_t}\frac{2t\vert\H\vert^2+m}{\pind}\le \frac{m}{\varepsilon}.$$
\end{corollary}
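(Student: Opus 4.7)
The plan is to promote Lemma \ref{lemma 5.1} to a version that includes the $t$-factor, yielding an evolution inequality for $w:=(2t\vert\H\vert^2+m)/\pind$ of the form $(\partial_t-\Delta)w \le \langle\a,\nabla w\rangle$ for a vector field $\a$ that is bounded on $\M\times[0,t_1]$. At $t=0$ the hypothesis $\pind\ge\varepsilon$ gives $w\vert_{t=0} = m/\pind\vert_{t=0} \le m/\varepsilon$, so the maximum principle of Proposition \ref{prop 1} will finish the argument.

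The first step is to split $w = 2t v + m/\pind$, where $v := \vert\H\vert^2/\pind$ is the function already controlled by Lemma \ref{lemma 5.1}. The product rule gives three pieces: the instantaneous derivative $2v$ from differentiating the explicit $2t$, the contribution $2t(\partial_t-\Delta)v$, and the reciprocal piece $m(\partial_t-\Delta)(1/\pind)$. Lemma \ref{lemma 5.1} immediately controls the second, and expanding $(\partial_t-\Delta)(1/\pind) = -(\partial_t-\Delta)\pind/\pind^2 - 2\vert\nabla\pind\vert^2/\pind^3$ together with the improved lower bound \eqref{p11} bounds the third by $-2\vert A\vert^2/\pind - \tfrac{3}{2}\vert\nabla\pind\vert^2/\pind^3$.

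The second step is to convert the drift term of Lemma \ref{lemma 5.1}, which involves $\nabla v$, into one involving $\nabla w$. From $\nabla v = \nabla w/(2t) + m\nabla\pind/(2t\pind^2)$ one gets
\begin{equation*}
\frac{2t}{\pind}\langle\nabla\pind,\nabla v\rangle \;=\; \frac{\langle\nabla\pind,\nabla w\rangle}{\pind} + \frac{m\vert\nabla\pind\vert^2}{\pind^3}.
\end{equation*}
Assembling the three pieces, the two contributions with $m\vert\nabla\pind\vert^2/\pind^3$ combine into $-\tfrac{m}{2}\vert\nabla\pind\vert^2/\pind^3 \le 0$, and the term $2v = 2\vert\H\vert^2/\pind$ is absorbed by $-2m\vert A\vert^2/\pind$ via the pointwise bound $\vert\H\vert^2 \le m\vert A\vert^2$. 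What remains is exactly
\begin{equation*}
\bigl(\partial_t-\Delta\bigr) w \;\le\; \frac{1}{\pind}\langle\nabla\pind,\nabla w\rangle.
\end{equation*}

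Finally, to apply Proposition \ref{prop 1} the vector field $\a=\nabla\pind/\pind$ must be uniformly bounded on $\M\times[0,t_1]$. This is where the standing assumptions enter: Corollary \ref{area uni} guarantees $\pind\ge\varepsilon>0$ is preserved, and as noted in the proof of Corollary \ref{coro huni}, $\vert\nabla\pind\vert^2 \le 2\vert A\vert^2$, so the bounded-curvature hypothesis on each $\M_t$ together with the short-time theory furnishes the required bound. The only real obstacle is the careful bookkeeping in the second step: one must use the sharper version \eqref{p11} (with the $-\vert\nabla\pind\vert^2/(2\pind)$ correction) rather than just $(\partial_t-\Delta)\pind \ge 2\pind\vert A\vert^2$, and the constant $m$ in the numerator of $w$ has to be chosen precisely so that the $\vert A\vert^2$ absorption through $\vert\H\vert^2 \le m\vert A\vert^2$ closes the argument with no residue.
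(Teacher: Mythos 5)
Your proof is correct and follows essentially the same computation as the paper: the paper likewise expands $(\partial_t-\Delta)\bigl((2t|\H|^2+m)/\pind\bigr)$ via the product rule, uses Lemma \ref{lemma 5.1} and the sharpened bound \eqref{p11}, rewrites the $\nabla(|\H|^2/\pind)$ drift as a $\nabla\bigl((2t|\H|^2+m)/\pind\bigr)$ drift plus a $\tfrac{m}{\pind^3}|\nabla\pind|^2$ remainder, and discards $2(|\H|^2-m|A|^2)/\pind - \tfrac{m}{2}|\nabla\pind|^2/\pind^3 \le 0$ before invoking Proposition \ref{prop 1}. (One cosmetic note: rather than writing $\nabla v = \nabla w/(2t) + \dots$, which is formally singular at $t=0$, it is cleaner to use $2t\,\nabla v = \nabla w + m\nabla\pind/\pind^2$, which gives the same identity without dividing by $t$.)
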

\begin{proof}
Recall that $\vert\H\vert^2\le m\vert A\vert^2$. From Lemma \ref{evolpind} and Lemma \ref{lemma 5.1}, we get
\begin{eqnarray}\label{mcf 7}
	\left(\dt-\Delta\right)\frac{2t\vert\H\vert^2+m}{\pind}
	&=&2t\left(\dt-\Delta\right)\frac{\vert\H\vert^2}{\pind}
	+2\frac{\vert\H\vert^2}{\pind}-\frac{m}{\pind^2}\left(\dt-\Delta\right)\pind-2m\frac{\vert\nabla\pind\vert^2}{\pind^3}\nonumber\\
	&\le&\frac{2t}{\pind}\left\langle\nabla\pind,\nabla\frac{\vert\H\vert^2}{\pind}\right\rangle+2\frac{\vert\H\vert^2-m\vert A\vert^2}{\pind}-\frac{3m}{2}\frac{\vert\nabla\pind\vert^2}{\pind^3}\nonumber\\
	&=&\frac{1}{\pind}\left\langle\nabla\pind,\nabla\frac{2t\vert\H\vert^2+m}{\pind}\right\rangle+2\frac{\vert\H\vert^2-m\vert A\vert^2}{\pind}-\frac{m}{2}\frac{\vert\nabla\pind\vert^2}{\pind^3}\nonumber\\
	&\le&\frac{1}{\pind}\left\langle\nabla\pind,\nabla\frac{2t\vert\H\vert^2+m}{\pind}\right\rangle.\nonumber
	\end{eqnarray}
	Since \eqref{uniform} holds for all $t\ge 0$ we can apply Proposition \ref{prop 1} to conclude
	$$\sup_{\M_t}\frac{2t\vert\H\vert^2+m}{\pind}\le\sup_{\M_0}\frac{m}{\pind}\le\frac{m}{\varepsilon},$$
	for all $t\ge 0$.
\end{proof}
The following counterexample demonstrates that this interior estimate is sharp in the sense that it does not necessarily  hold for area decreasing maps that are not uniformly area decreasing.
\begin{example}
	Let $\textbf{u}:\real{m}\to\real{}$, $m\ge 2$, be the bowl soliton, i.e. the graph of $\textbf{u}$ is the translating soliton in direction of $v:=e_{m+1}$ that is rotationally symmetric about the $e_{m+1}$-axis. Define the map $\f:\real{m}\to\real{2}$, given by
	$$\f(x):=(\textbf{u}(x),0).$$ Then $\f$ admits only one non-trivial singular value $\lambda$ and
	$\pind=(1+\lambda^2)^{-1}$ tends to zero for $\vert x\vert\to\infty$. Hence $\f$ is area decreasing but not uniformly area decreasing. Since the mean curvature of the translating soliton is time-independent, the interior mean curvature estimate in Corollary \ref{mean int} cannot hold in this case.
\end{example}
To derive explicit interior estimates for the second fundamental form $A$ is more difficult. To achieve this, we may apply the curvature estimate in \eqref{secest}. Unfortunately the coefficient in front of $\vert A\vert^4$ on the right hand side of that inequality is $3$ and not $2$, like in codimension one. We will show in the appendix that this imposes a constraint on the size of $\varepsilon$ in inequality \eqref{uniform}. Instead, we will now prove with a different method that the interior estimates for $\vert\nabla^\ell A\vert$ hold for any $\varepsilon>0$, and for arbitrary $\ell\ge 0$.
\begin{proposition}\label{prop int}
	Suppose $\M_0$ satisfies \eqref{uniform}, and $\sup_{\M_0}\vert\nabla^{\ell} A\vert$ is bounded for all $\ell\ge 0$. Then for each $\ell$ we obtain an interior estimate
	$$t^{\ell+1}\vert\nabla^{\ell} A\vert^2\le D(\ell),\, t\in[0,\infty),$$
	where the constants $D(\ell)$ depend only on $\varepsilon$, $D(0)$, and $\sup_{\M_0}\vert\nabla^{j} A\vert$, for every $1\le j\le \ell$.
\end{proposition}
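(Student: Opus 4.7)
The plan is an induction on $\ell$, with the base case $\ell=0$, namely $t\vert A\vert^2\le D(0)$ for $D(0)$ depending only on $\varepsilon$ and $m$, being the heart of the argument. The direct parabolic maximum principle approach via a normalized quantity of the form $\vert A\vert^2/\pind^q$ is obstructed by the coefficient $3$ in \eqref{secest}: for no choice of $q$ does the reaction term acquire a useful sign without restricting $\varepsilon$. I would therefore prove the base case by blow-up contradiction using the Bernstein theorem \cite[Theorem 1.1]{AJ18}, running a rescaling procedure exactly parallel to that used in the proof of Theorem \ref{lte}, but now driven by failure of $t\vert A\vert^2$ to be bounded rather than by failure of $\vert A\vert$ alone.

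Suppose $t_k\vert A(x_k,t_k)\vert^2\to\infty$ along some sequence $(x_k,t_k)\in\M\times[0,\infty)$. Theorem \ref{lte} provides $\vert A\vert\le C_0$ uniformly, which forces $t_k\to\infty$. A Hamilton-type parabolic point-picking furnishes a modified sequence $(\tilde x_k,\tilde t_k)$ with $\tilde t_k\lambda_k^2\to\infty$, where $\lambda_k:=\vert A(\tilde x_k,\tilde t_k)\vert$, and with $\vert A\vert\le 2\lambda_k$ on a parabolic neighborhood whose rescaled size grows to infinity. Setting
$$\f_{\lambda_k}(y,s):=\lambda_k\bigl(\f(\tilde x_k+y/\lambda_k,\tilde t_k+s/\lambda_k^2)-\f(\tilde x_k,\tilde t_k)\bigr),$$
the rescaled graphical flow satisfies $\vert A_{\lambda_k}(0,0)\vert=1$, $\vert A_{\lambda_k}\vert\le 2$, and by scale-invariance $\pind_{\lambda_k}\ge\varepsilon$, which in codimension two yields the uniform extrinsic bound $\vert\D\f_{\lambda_k}\vert^2\le(1-\varepsilon)/\varepsilon$. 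Standard interior Shi/Ecker-Huisken derivative estimates for mean curvature flow, applied on rescaled time intervals of length $\to\infty$ where $\vert A_{\lambda_k}\vert\le 2$, provide uniform bounds on all $\vert\nabla^{\ell'}A_{\lambda_k}\vert$ over compact subsets. Crucially, Corollary \ref{mean int} forces
$$\vert\H_{\lambda_k}(q,s)\vert^2=\lambda_k^{-2}\vert\H(\,\cdot\,,\tilde t_k+s/\lambda_k^2)\vert^2\le\frac{m}{2\varepsilon(\tilde t_k\lambda_k^2+s)}\longrightarrow 0$$
uniformly on compact $s$-intervals. By Arzela-Ascoli, a subsequence converges smoothly on compact subsets to a smooth entire graph $\f_\infty:\real{m}\to\real{2}$ which is stationary ($\vert\H_\infty\vert\equiv 0$), uniformly area decreasing ($\pind_\infty\ge\varepsilon$), and non-flat ($\vert A_\infty(0)\vert=1$). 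This contradicts the Bernstein theorem \cite[Theorem 1.1]{AJ18}.

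For the inductive step $\ell\ge 1$, assume $t^{j+1}\vert\nabla^j A\vert^2\le D(j)$ for $0\le j<\ell$. I would consider the auxiliary combination
$$h_\ell:=t^{\ell+1}\vert\nabla^\ell A\vert^2+\sum_{j=0}^{\ell-1}K_{\ell,j}\,t^{j+1}\vert\nabla^j A\vert^2$$
with $K_{\ell,j}>0$ to be chosen large. Lemma \ref{lemma 2.4} produces an evolution inequality; the most delicate cubic term $Ct^{\ell+1}\vert A\vert^2\vert\nabla^\ell A\vert^2$ (from the multi-index $(\ell,0,0)$) is dominated by $(CD(0)/t)\cdot t^{\ell+1}\vert\nabla^\ell A\vert^2$ via the base case, while the remaining mixed cubic products are controlled by the inductive hypothesis combined with Young's inequality. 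The resulting singular $1/t$-terms are absorbed by the negative contributions $-2K_{\ell,j}t^{j+1}\vert\nabla^{j+1}A\vert^2$ generated by the lower-order summands, provided the $K_{\ell,j}$ are chosen sufficiently large in descending order of $j$. The outcome is a differential inequality of the form $(\dt-\Delta)h_\ell\le\langle\a,\nabla h_\ell\rangle+\beta\,h_\ell+\gamma$ on $\M\times(0,\infty)$ with $\beta,\gamma$ bounded in terms of $\varepsilon$, $D(0),\ldots,D(\ell-1)$, and the initial $C^j$-norms for $1\le j\le\ell$; Proposition \ref{prop 2} then delivers $h_\ell\le D(\ell)$ uniformly, and hence the claimed estimate. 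The main obstacle throughout is the base case: the coefficient $3$ in \eqref{secest} genuinely prevents a maximum-principle argument for $\vert A\vert^2$ without restricting $\varepsilon$, and it is the combination of Corollary \ref{mean int} (to drive $\vert\H_{\lambda_k}\vert\to 0$ in the blow-up) with the Bernstein rigidity from \cite{AJ18} that rescues the estimate for all $\varepsilon>0$.
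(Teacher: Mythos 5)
Your proof follows essentially the same approach as the paper's: the base case $\ell=0$ is established via the identical blow-up contradiction driven by Corollary \ref{mean int} (to force $\H_{\lambda_k}\to 0$) and the Bernstein rigidity of \cite{AJ18}, and the higher-order estimates are obtained by bootstrapping; the paper simply references a "standard bootstrapping argument similar to that used in the proof of Proposition \ref{longtime} (cf. also with \cite{CCH12})" where you spell it out with the auxiliary combination $h_\ell$. The one place you are slightly more careful than the text is the parabolic point-picking: the paper writes $\lambda_k=\sup_{\M\times[0,t_k]}\vert A\vert$ and implicitly assumes a nearly-maximal point $(x_k,t_k)$ can be chosen with $t_k\lambda_k^2\to\infty$, which as stated is not automatic (the sup could be realized at small times); your "Hamilton-type parabolic point-picking" (or, equivalently, replacing the time slab by $[s_k/2,s_k]$ before taking the sup) is the clean way to make this rigorous, and is the argument the paper clearly intends.
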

\begin{proof}
	Longtime existence is guaranteed by Theorem \ref{lte}. It is well known that the interior estimates for
	$\vert\nabla^\ell A\vert$, $\ell\ge 1$, follow from those for $\vert A\vert$ by a standard bootstrapping argument similar 
	to that used in the proof of Proposition \ref{longtime} (cf. also with \cite{CCH12}). Therefore, we only need to prove 
	that $t\vert A\vert^2$ stays uniformly bounded. We argue by contradiction. Suppose $t\vert A\vert^2$ is not uniformly 
	bounded on $[0,\infty)$, and define
	$$\lambda_k:=\sup_{\M\times[0,t_k]}\vert A\vert.$$
	Thus there exists a sequence $\{(x_k,t_k)\}_{k\in\natural{}}\subset\M\times[0,\infty)$ such that
	$$\vert A(x_k,t_k)\vert\ge \frac{\lambda_k}{2},\quad \lim_{k\to\infty}t_k=\infty,\quad \lim_{k\to\infty}(t_k\lambda_k^2)=\infty.$$
	We now proceed exactly as in the proof of Theorem \ref{lte} and define for each $k$ the parabolic rescaling at $(x_k,t_k)$ by
	$$\f_{\lambda_k}(y,s):=\lambda_k\Bigl(\f(x,t)-\f(x_k,t_k)\Bigr), \quad y:=\lambda_k(x-x_k),\quad s:=\lambda_k^2(t-t_k).$$
	As in the proof of Theorem \ref{lte} we then conclude that a subsequence of $\f_{\lambda_k}$ converges locally 
	uniformly on compact sets to a smooth and uniformly area decreasing limiting map $\f_{\infty}:\real{m}\to\real{2}$ 
	with
	$$|A_{\infty}(0)|\ge 1/2.$$
	For each $k$ the rescaled mean curvature at $s=0$ is given by
	$$\H_{\lambda_k}(y,0)=\lambda_k^{-1}\,\H(x,t_k)=(\sqrt{t_k}\lambda_k)^{-1}\sqrt{t_k}\H(x,t_k),$$
	and $\sqrt{t_k}\,\H(x,t_k)$ is uniformly bounded.  It follows from $t_k\lambda_k^2\to\infty$ that the mean curvature of the limit 
	map $\f_{\infty}$ vanishes. Therefore $\f_{\infty}$ is a minimal and uniformly area decreasing map that is not totally 
	geodesic. This again contradicts the \textsc{Bernstein} theorem in
	\cite[Theorem 1.1]{AJ18} 
	and proves that $t|A|^2$ must be  uniformly bounded on $[0,\infty)$. 
\end{proof}
Similarly, as in \cite{EH}, we define
$$\tilde \F(x,s)=\frac{1}{\sqrt{2t+1}}\,\F(x,t),\quad s=\frac{1}{2}\log(2t+1),\,0\le s<\infty.$$
	The normalized evolution equation then becomes
\begin{equation}\label{mcfnorm}
\ds\tilde \F=\tilde{\H}-\tilde{\F},
\end{equation}
and the interior estimates in Proposition \ref{prop int} imply that on the rescaled submanifolds $\tilde \M_s:=\tilde \F(\M,s)$ we have
$$\tilde{\pind}\ge\varepsilon,\quad\vert\nabla^\ell\tilde A\vert^2\le C(\ell)$$
uniformly on $\tilde{\M}_s$, $s\in[0,\infty)$, for constants $C(\ell)$, depending only on the initial conditions $\varepsilon$, and on $C(0)$.

\begin{theorem}[Asymptotic convergence]\label{asymp}
	Suppose $\M_0$ is the graph of a uniformly area decreasing map with $\sup_{\M_0}\vert\nabla^{\ell} A\vert$ bounded for all $\ell\ge 0$, and suppose $\M_0$ is asymptotically conical in the sense of \eqref{cond cone}. Then the solution $\tilde{\M_s}$ of the normalized equation \eqref{mcfnorm} converges for $s\to\infty$ to a limiting submanifold $\tilde{\M}_\infty$ which is the graph of a uniformly area decreasing map, and in addition a self-expanding soliton, i.e. a solution of
	\begin{equation}\label{expander}
	\tilde \F^\perp=\tilde{\H}.
	\end{equation}
	In particular, there exist non-trivial uniformly area decreasing self-expanding smooth
	maps $\,\f:\real{m}\to\real{2}$.
\end{theorem}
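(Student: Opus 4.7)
The plan is to combine the uniform smooth estimates of Proposition \ref{prop int} with a rescaled monotonicity formula adapted to self-expanders. By Theorem \ref{lte} and Proposition \ref{prop int}, the normalized flow \eqref{mcfnorm} satisfies $\tilde{\pind}\ge\varepsilon$ and $\vert\nabla^\ell\tilde A\vert\le C(\ell)$ uniformly on $[0,\infty)$, and the asymptotically conical condition \eqref{cond cone} is preserved along the flow by the final proposition of Section \ref{sec3}. Starting from \eqref{mon 2} with basepoint $\y_0=0$ and the shifted parameter $\tau=t+\tfrac12$ (the derivation carrying over to any codimension as in \cite{Hamilton93}), the substitution $\tilde\F=(2t+1)^{-1/2}\F$, $s=\tfrac12\log(2t+1)$ produces a rescaled identity of the form
\begin{equation*}
\frac{d}{ds}\int_{\tilde\M_s}\tilde\rho\,d\tilde\mu_s = -\int_{\tilde\M_s}\vert\tilde\H-\tilde\F^\perp\vert^2\,\tilde\rho\,d\tilde\mu_s,
\end{equation*}
for an appropriate Gaussian weight $\tilde\rho$ on $\real{m+2}$. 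The asymptotic cone assumption, via the preserved estimate $\vert\F^\perp\vert^2\le c(t)(1+\vert\F\vert^2)^{1-\delta}$, ensures that $\int_{\tilde\M_s}\tilde\rho\,d\tilde\mu_s$ is finite and uniformly bounded in $s$, so integrating yields
\begin{equation*}
\int_0^\infty\int_{\tilde\M_s}\vert\tilde\H-\tilde\F^\perp\vert^2\,\tilde\rho\,d\tilde\mu_s\,ds<\infty.
\end{equation*}

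Combined with the uniform smooth bounds on $\tilde A$ and with $\vert\D\tilde\f\vert^2=1-\tilde\pind\le 1-\varepsilon$, every sequence $s_k\to\infty$ admits, by Arzela--Ascoli, a subsequence along which the rescaled graphs $\tilde\M_{s_k}$ converge smoothly on compact subsets of $\real{m+2}$ to a smooth entire graph $\tilde\M_\infty$ of a uniformly area decreasing map $\f_\infty:\real{m}\to\real{2}$. The integrability above forces $\tilde\H_\infty=\tilde\F_\infty^\perp$, identifying $\tilde\M_\infty$ as a solution of \eqref{expander}. Upgrading this subsequential convergence to convergence of the entire family $\{\tilde\M_s\}_{s\in[0,\infty)}$ proceeds by invoking a uniqueness theorem for self-expanders sharing the common asymptotic cone of $\M_0$ (which every subsequential limit inherits through the preservation of \eqref{cond cone}), or alternatively via a \textsc{\L{}ojasiewicz}-type inequality for the monotone quantity above. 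The uniform area decreasing property $\tilde\pind\ge\varepsilon$ passes to the limit by smooth convergence.

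Non-triviality of $\f_\infty$ follows by applying the construction to any admissible $\f_0$ whose graph is not already a self-expander, for instance a compactly supported smooth uniformly area decreasing perturbation of the graph of an asymptotic cone. The main obstacle in executing this plan is the uniqueness step: while subsequential smooth convergence and the identification of the limit as a self-expander are essentially forced by the uniform estimates together with the monotonicity formula, promoting this to convergence of the entire family $\{\tilde\M_s\}$ without passing to subsequences genuinely requires a separate, nontrivial argument --- either a uniqueness result for self-expanders with prescribed asymptotic cone or an explicit rate of convergence derived from the monotone energy.
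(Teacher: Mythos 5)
Your proposal takes a genuinely different route from the paper's. The paper does \emph{not} argue by integral monotonicity and subsequential compactness; instead it applies the pointwise maximum principle of Proposition~\ref{prop 1} to the weighted quantity
\[
\frac{\vert\H-\F^\perp\vert^2}{\pind\,(1+\alpha\vert\F\vert^2)^{1-\beta}},\qquad 0<\beta<\delta,\ \alpha>0,
\]
where $\delta$ is the exponent in the conical estimate \eqref{cond cone}. The factor $(1+\alpha\vert\F\vert^2)^{1-\beta}$, together with the preservation of \eqref{cond cone}, makes this quantity bounded at $t=0$ despite the fact that $\F^\perp$ itself may be unbounded. Proceeding exactly as in Lemma~\ref{lemma 5.1} but with Lemma~\ref{lemma 6.6} as input, one finds that along the normalized flow the ratio $\vert\tilde\H-\tilde\F^\perp\vert^2/\tilde\pind$ satisfies
\[
\Bigl(\ds-\tilde\Delta\Bigr)\frac{\vert\tilde\H-\tilde\F^\perp\vert^2}{\tilde\pind}\le\frac{1}{\tilde\pind}\Bigl\langle\tilde\nabla\tilde\pind,\tilde\nabla\frac{\vert\tilde\H-\tilde\F^\perp\vert^2}{\tilde\pind}\Bigr\rangle-2\,\frac{\vert\tilde\H-\tilde\F^\perp\vert^2}{\tilde\pind},
\]
and after incorporating the weight the maximum principle yields \emph{exponential pointwise decay} of $\vert\tilde\H-\tilde\F^\perp\vert^2$ on compact subsets, with an explicit rate $e^{-2s}$. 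This is what delivers convergence of the entire normalized family, not merely a subsequence, and at the same time identifies the limit as a self-expander. The uniform $C^\ell$-bounds from Proposition~\ref{prop int} then upgrade this to smooth convergence in the usual way.

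This also clarifies why your proposal, as written, has a genuine gap --- indeed the very gap you yourself flag. Your integral approach gives only that $\vert\tilde\H-\tilde\F^\perp\vert$ has vanishing weighted $L^2$-norm along a sequence $s_k\to\infty$; passing from this to convergence of the whole family requires either a uniqueness theorem for self-expanders with a fixed asymptotic cone in codimension two, or a rate of decay --- neither of which is supplied. The paper sidesteps this completely because the pointwise decay \emph{is} the rate. A secondary concern: the ``rescaled monotonicity identity'' you invoke with the shift $\tau=t+\tfrac12$ is not the Huisken--Hamilton formula \eqref{mon 2}, which is a \emph{backward} heat-kernel monotonicity with $\tau=t_0-t>0$ and dissipation term $\vert\H+\tfrac{1}{2\tau}\F^\perp\vert^2$ (shrinker sign). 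The expander analogue involves the opposite sign, and the natural expander weight $e^{+\vert\F\vert^2/4}$ is generally not integrable on an asymptotically conical submanifold. So the identity you write down would need an independent derivation, and it is not a trivial reformulation of \eqref{mon 2}. In short, the subsequential-convergence portion of your argument could be made to work (uniform estimates plus Arzel\`a--Ascoli are certainly available), but both the monotonicity input and the uniqueness step are genuine missing pieces, whereas the paper's maximum-principle argument obtains the rate directly and avoids both.
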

\begin{remark}
	From \cite{EH} we get an interesting counterexample to Theorem \ref{asymp} in case \eqref{cond cone} is violated. Let $\f:\real{m}\to\real{2}$, $\f(x)=(\textbf{u} (x),0)$, where $\textbf{u}  :\real{m}\to\real{}$ is the function
	$$\textbf{u}  (x)=\begin{cases}
	\vert x\vert\sin\log\vert x\vert,&\vert x\vert \ge 1,\\
	\text{smooth},&\vert x\vert \le 1.
	\end{cases}$$
	$\f$ admits only one non-trivial singular value $\lambda$, and
	$$\pind=\frac{1}{1+\lambda^2}\ge\varepsilon>0$$
	for some constant $\varepsilon$ so that $\f$ is uniformly area decreasing. As was pointed out in \cite{EH}, the graph of $\textbf{u}$ (and hence also of $\f$) violates \eqref{cond cone} and does not asymptotically converge to a self-expander.
\end{remark}
Before we prove Theorem \ref{asymp} we need to compute certain evolution equations.
\begin{lemma}\label{lemma 6.6}
	The normalized quantity $\tilde{\H}-\tilde{\F}^\perp$ satisfies the evolution equation
	\begin{eqnarray*}
	\Bigl(\tilde{\nabla}^\perp_{\ds}-\tilde\Delta^\perp\Bigr)(\tilde{\H}-\tilde{\F}^\perp)&=&\langle \tilde A^{\tilde\H-\tilde\F^\perp},\tilde A\rangle-(\tilde\H-\tilde\F^\perp),\\
	\Bigl(\ds-\tilde\Delta\Bigr)\vert\tilde{\H}-\tilde\F^\perp\vert^2&=&-2\vert\tilde\nabla^\perp(\tilde\H-\tilde\F^\perp)\vert^2+2\vert \tilde A^{\tilde\H-\tilde\F^\perp}\vert^2-2\vert \tilde\H-\tilde\F^\perp\vert^2\\
	&\le&-2\vert\tilde\nabla^\perp(\tilde\H-\tilde\F^\perp)\vert^2+2(\vert \tilde A
	\vert^2-1)\vert \tilde\H-\tilde\F^\perp\vert^2.
	\end{eqnarray*}
\end{lemma}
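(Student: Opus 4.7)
My strategy is to derive evolution equations for $\tilde{\H}$ and $\tilde{\F}^\perp$ separately under the normalized flow, combine them by subtraction to obtain the first identity, then apply the standard Bochner-type formula
\[
(\ds - \tilde{\Delta})|V|^2 = 2\langle V, (\tilde{\nabla}^\perp_{\ds} - \tilde{\Delta}^\perp)V\rangle - 2|\tilde{\nabla}^\perp V|^2,
\]
valid for any smooth section $V$ of the normal bundle, to produce the second identity. The final inequality will then follow from a one-line Cauchy--Schwarz bound.

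The starting data are Lemmas \ref{lemma 2.2} and \ref{lemma 3.5}, which on the unnormalized flow give $(\nabla^\perp_{\dt}-\Delta^\perp)\H = \langle A^\H, A\rangle$ and $(\nabla^\perp_{\dt}-\Delta^\perp)\F^\perp = 2\H + \langle A^{\F^\perp}, A\rangle$. Passage to the normalized flow amounts to composing with the ambient homothety $y \mapsto e^{-s}y$ together with the reparametrization $s = \tfrac12 \log(2t+1)$. Since homotheties preserve normal directions, one has the natural scalings $\tilde{\H} = e^s\H$, $\tilde{\F}^\perp = e^{-s}\F^\perp$, $\tilde{A} = e^{-s}A$, $\tilde{\Delta}^\perp = e^{2s}\Delta^\perp$, $\ds = e^{2s}\dt$, together with $\langle\tilde{A}^{\tilde{\H}}, \tilde{A}\rangle = e^{3s}\langle A^{\H}, A\rangle$ and $\langle\tilde{A}^{\tilde{\F}^\perp}, \tilde{A}\rangle = e^{s}\langle A^{\F^\perp}, A\rangle$. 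Applying the chain rule to $\tilde{\H} = e^s\H$ and $\tilde{\F}^\perp = e^{-s}\F^\perp$ then transforms the identities above into
\begin{align*}
(\tilde{\nabla}^\perp_{\ds} - \tilde{\Delta}^\perp)\tilde{\H} &= \langle \tilde{A}^{\tilde{\H}}, \tilde{A}\rangle + \tilde{\H},\\
(\tilde{\nabla}^\perp_{\ds} - \tilde{\Delta}^\perp)\tilde{\F}^\perp &= 2\tilde{\H} + \langle \tilde{A}^{\tilde{\F}^\perp}, \tilde{A}\rangle - \tilde{\F}^\perp,
\end{align*}
where the additional terms $+\tilde{\H}$ and $-\tilde{\F}^\perp$ arise precisely from differentiating the scaling factors $e^{\pm s}$ and survive after reparametrization. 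Subtracting the two equations yields the first assertion of the lemma.

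Plugging $V = \tilde{\H} - \tilde{\F}^\perp$ into the Bochner-type identity, using the just-derived evolution of $V$ together with the routine identity $\langle V, \langle\tilde{A}^V, \tilde{A}\rangle\rangle = |\tilde{A}^V|^2$ (which follows by contracting indices), immediately produces the equality stated in the second line of the lemma. The final inequality is an immediate Cauchy--Schwarz estimate: writing $\tilde{A}^V_{ij} = \langle V, \tilde{A}_{ij}\rangle$ gives $|\tilde{A}^V|^2 = \sum_{i,j}\langle V, \tilde{A}_{ij}\rangle^2 \le |V|^2\,|\tilde{A}|^2$.

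The main obstacle I foresee is the scaling bookkeeping in the second step: one must verify that the extra contributions $+\tilde{\H}$ and $-\tilde{\F}^\perp$ produced by the $e^{\pm s}$ factors combine with the $2\tilde{\H}$ coming from Lemma \ref{lemma 3.5} to collapse into the single clean $-(\tilde{\H} - \tilde{\F}^\perp)$ on the right-hand side of the final evolution equation. Once the rescaling is tracked carefully, the remainder is a formal consequence of Lemmas \ref{lemma 2.2} and \ref{lemma 3.5} together with Cauchy--Schwarz; alternatively, one can bypass the explicit scaling by redoing the derivations of Lemmas \ref{lemma 2.2} and \ref{lemma 3.5} directly for the normalized flow $\ds\tilde{\F} = \tilde{\H} - \tilde{\F}$ starting from $\tilde{\Delta}\tilde{\F} = \tilde{\H}$, in which case the extra ambient piece $-\tilde{\F}$ produces the identical zero-order corrections.
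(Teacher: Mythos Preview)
Your proposal is correct and follows essentially the same route as the paper: the paper's proof simply records the two normalized intermediate equations $(\tilde\nabla^\perp_{\ds}-\tilde\Delta^\perp)\tilde\H=\langle\tilde A^{\tilde\H},\tilde A\rangle+\tilde\H$ and $(\tilde\nabla^\perp_{\ds}-\tilde\Delta^\perp)\tilde\F^\perp=2\tilde\H+\langle\tilde A^{\tilde\F^\perp},\tilde A\rangle-\tilde\F^\perp$, subtracts, and then invokes $|\tilde A^\xi|^2\le|\tilde A|^2|\xi|^2$. Your version is the same argument with the scaling bookkeeping spelled out more explicitly.
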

\begin{proof}
	Similarly as in Lemma \ref{lemma 2.2} we get
	\begin{eqnarray*}
	\Bigl(\tilde\nabla^\perp_\ds-\tilde\Delta^\perp\Bigr)\tilde\H=\langle \tilde A^{\tilde\H},\tilde A\rangle+\tilde\H,\quad
	\Bigl(\tilde\nabla^\perp_\ds-\tilde\Delta^\perp\Bigr)\tilde \F^\perp=2\tilde \H+\langle \tilde A^{\tilde \F^\perp}\!,\tilde A\rangle-\tilde\F^\perp,
	\end{eqnarray*}
	and the result follows from the general inequality
	$$\vert\tilde A^\xi\vert^2\le \vert\tilde A\vert^2\vert\xi\vert^2,$$
	for any normal vector $\xi$.
\end{proof}
{\em Proof of Theorem $6.4$.}
	The proof can be carried out in a similar way as the proof of the asymptotic convergence result in \cite{EH}. Here the crucial function is
	$$\frac{\vert\H-\F^\perp\vert^2}{\pind(1+\alpha\vert\F\vert^2)^{1-\beta}},$$
	with constants $0<\beta<\delta$, $\alpha>0$ to be chosen, where $\delta$ denotes the constant in \eqref{cond cone}. Since $\pind$ is scaling invariant we obtain from \eqref{p11} that
	\begin{equation}\nonumber
	\Bigl(\ds-\Delta\Bigr)\tilde\pind\ge 2\tilde\pind\vert \tilde A\vert^2-\frac{1}{2\tilde\pind}\vert\nabla\tilde\pind\vert^2.
	\end{equation}
	Then with Lemma \ref{lemma 6.6} and with the inequality
	\begin{equation}\nonumber
	\vert\tilde\nabla\vert\tilde\H-\tilde\F^\perp\vert^2\vert^2\le 4\vert\tilde\H-\tilde \F^\perp\vert^2\vert\tilde\nabla^\perp(\tilde\H-\tilde\F^\perp)\vert^2
	\end{equation}
	we may proceed exactly as in the proof of Lemma \ref{lemma 5.1} to obtain
	\begin{equation*}
	\Bigl(\ds-\Delta\Bigr)\frac{\vert\tilde\H-\tilde \F^\perp\vert^2}{\tilde\pind}\le\frac{1}{\tilde\pind}\Bigl\langle\tilde\nabla\tilde\pind,\nabla\frac{\vert\tilde\H-\tilde \F^\perp\vert^2}{\tilde\pind}\Bigr\rangle-2\frac{\vert\tilde\H-\tilde \F^\perp\vert^2}{\tilde\pind}.
	\end{equation*}
This completes the proof. $\hfill \square $

\section*{Appendix}\label{app}
Here we will derive an explicit estimate for the second fundamental form of
the graph that is interior in time, provided the initial map is 
uniformly area decreasing with a sufficiently big constant $\varepsilon$. Thus in this case we obtain an alternative proof of 
the interior estimates given in Proposition \ref{prop int}, and in particular the constant $C(0)$ in Proposition \ref{prop int} 
will then only depend on $\sup_{\M_0}\vert A\vert$.

\smallskip
Let $\varmine$ be a positive non-decreasing function of $\pind$ to be determined later and set $\varmine'=\partial\varmine/\partial{\pind}$. For a time dependent function $\alpha\ge0$ we define
$$w:=\frac{\alpha\vert A\vert^2+1}{\varmine}.$$
We compute
\begin{eqnarray}
\left(\dt-\Delta\right)w&=&\frac{1}{\varmine}\left(\dt-\Delta\right)\bigl(\alpha\vert A\vert^2+1\bigr)-w\frac{\varmine'}{\varmine}\left(\dt-\Delta\right)\pind\nonumber\\
&&+w\Bigl(\frac{\varmine''}{\varmine}-2\Bigl(\frac{\varmine'}{\varmine}\Bigr)^2\Bigr)\vert\nabla\pind\vert^2+2\frac{\varmine'}{\varmine^2}\langle\nabla\pind,\nabla(\alpha\vert A\vert^2)\rangle\nonumber\\
&\overset{(\ref{secest}),(\ref{p11})}{\le}&\frac{\vert A\vert^2}{\varmine}\bigl(\bdot\alpha+3\alpha\vert A\vert^2-2\pind\varmine' w\bigr)
+2\frac{\varmine'}{\varmine}\langle\nabla\pind,\nabla w\rangle\nonumber\\
&&-2\frac{\alpha}{\varmine}\vert\nabla^\perp A\vert^2+\Bigl(\frac{\varmine''}{\varmine}+\frac{\varmine'}{2\pind\varmine}\Bigr)w\vert\nabla\pind\vert^2.\label{inter 3}
\end{eqnarray}
With \textsc{Kato}'s inequality $\vert\nabla\vert  A\vert^2\vert^2\le 4\vert A\vert^2\vert\nabla^\perp A\vert^2$ one gets
\begin{eqnarray}
0\le\vert\nabla w\vert^2&=&\left\vert\frac{\alpha}{\varmine}\nabla\vert A\vert^2-\frac{\varmine'}{\varmine} w\nabla\pind\right\vert^2\nonumber\\
&=&\frac{\alpha^2}{\varmine^2}\vert\nabla\vert A\vert^2\vert^2-2\frac{\alpha\varmine'}{\varmine^2} w\langle\nabla\vert A\vert^2,\nabla \pind\rangle+\Bigl(\frac{\varmine'}{\varmine}\Bigr)^2w^2\vert\nabla\pind\vert^2\nonumber\\
&\le&2 w\left(\frac{2\alpha}{\varmine}\vert\nabla^\perp A\vert^2-\frac{\varmine'}{\varmine} \langle\nabla w,\nabla\pind\rangle-\frac{1}{2}\Bigl(\frac{\varmine'}{\varmine}\Bigr)^2w\vert\nabla\pind\vert^2\right).\nonumber
\end{eqnarray}
Hence $\alpha\ge 0$ implies
$$-2\frac{\alpha}{\varmine}\vert\nabla^\perp A\vert^2\le-\frac{\varmine'}{\varmine} \langle\nabla w,\nabla\pind\rangle-\frac{1}{2}\Bigl(\frac{\varmine'}{\varmine}\Bigr)^2w\vert\nabla\pind\vert^2.$$
Inserting this into \eqref{inter 3} gives
\begin{eqnarray}
\left(\dt-\Delta\right)w
&\le&\frac{\varmine'}{\varmine}\langle\nabla\pind,\nabla w\rangle+\frac{\vert A\vert^2}{\varmine}\bigl(\bdot\alpha+3\alpha\vert A\vert^2-2\pind\varmine' w\bigr)\nonumber\\
&&+\left(2\pind\frac{\varmine''}{\varmine}+\frac{\varmine'}{\varmine}-\pind\Bigl(\frac{\varmine'}{\varmine}\Bigr)^2\right)\frac{w\vert\nabla\pind\vert^2}{2\pind}.\label{sec prelim}
\end{eqnarray}
\begin{corollary}\label{coro 7.1}
	Let $\M_t$ be a smooth solution of \eqref{mcf} with bounded curvature on each $\M_t$, and suppose $\M_0$ satisfies \eqref{uniform} with $\varepsilon>1/9$. Then we have the interior estimate
	$$t\vert A\vert^2\le \frac{2}{(3\sqrt{\varepsilon}-1)^2}.$$
\end{corollary}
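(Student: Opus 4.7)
The plan is to apply the Ecker--Huisken maximum principle (Proposition \ref{prop 1}) to a carefully chosen function $w = (\alpha\vert A\vert^2+1)/\varmine$, exploiting the preliminary evolution inequality \eqref{sec prelim}. The three requirements that $\varmine$ and $\alpha$ must meet are: (i) the $\vert\nabla\pind\vert^2$-coefficient in \eqref{sec prelim} should vanish; (ii) the $\vert A\vert^2$-bracket should be non-positive; and (iii) the drift arising from $\nabla\pind$ must be uniformly bounded. I would take
$$
\alpha = 2t, \qquad \varmine = (3\sqrt{\pind}-1)^2,
$$
which meets all three conditions and elegantly accounts for the threshold $\varepsilon>1/9$: it is precisely the condition forcing $\varmine$ to stay bounded away from zero uniformly in time (by Corollary \ref{area uni}), so that $w$ is a well-defined smooth function throughout the evolution.

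The first step in the verification would be two algebraic identities for $\varmine = (3\sqrt{\pind}-1)^2$. Using $\varmine' = 9 - 3/\sqrt{\pind}$ and $\varmine'' = 3/(2\pind^{3/2})$, a short calculation would give
$$
2\pind\frac{\varmine''}{\varmine} + \frac{\varmine'}{\varmine} - \pind\Bigl(\frac{\varmine'}{\varmine}\Bigr)^2 = 0
$$
and
$$
3\varmine - 2\pind\varmine' = 3(3\sqrt{\pind}-1)(\sqrt{\pind}-1) \le 0 \quad\text{on}\quad [1/9,1].
$$
Writing $\alpha\vert A\vert^2 = \varmine w - 1$ in the bracket $\bdot\alpha + 3\alpha\vert A\vert^2 - 2\pind\varmine' w$ of \eqref{sec prelim} turns it into $-1 + (3\varmine-2\pind\varmine')w \le -1$, so the curvature contribution is $\le -\vert A\vert^2/\varmine$, while the $\vert\nabla\pind\vert^2$ term disappears outright, leaving
$$
\left(\dt-\Delta\right)w \;\le\; \Bigl\langle\a,\nabla w\Bigr\rangle, \qquad \a := \frac{\varmine'}{\varmine}\nabla\pind.
$$

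To conclude I would check that $\a$ is uniformly bounded on each finite interval $[0,t_1]$: the lower bound $\pind\ge\varepsilon$ keeps $\varmine'/\varmine = 3/[\sqrt{\pind}(3\sqrt{\pind}-1)]$ bounded, and Lemma \ref{evolpind} together with $\tind_{11}^2,\tind_{22}^2\le 1$ gives $\vert\nabla\pind\vert^2\le 2\vert A\vert^2$, which is bounded on each $\M_t$ by assumption. Proposition \ref{prop 1} then yields $\sup_{\M_t}w\le\sup_{\M_0}w\le 1/(3\sqrt{\varepsilon}-1)^2$, and multiplying by $\varmine\le 4$ (since $\pind\le 1$) produces $2t\vert A\vert^2+1\le 4/(3\sqrt{\varepsilon}-1)^2$, which rearranges to the claim. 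The main obstacle is discovering the ansatz for $\varmine$; within the natural family $\varmine=(a\sqrt{\pind}-b)^2$, the identity (i) holds for every choice of $a,b$, while (ii) on $[\varepsilon,1]$ forces $b/a\ge 1/3$, and optimizing against the initial-data bound $\sup_{\M_0}w = 1/(a\sqrt{\varepsilon}-b)^2$ singles out $b/a=1/3$, i.e.\ exactly $\varmine=(3\sqrt{\pind}-1)^2$.
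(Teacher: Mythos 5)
Your proof is correct and follows essentially the same strategy as the paper's: both start from the preliminary inequality \eqref{sec prelim}, choose $\varmine$ from the family $(a\sqrt{\pind}-b)^2$ so that the $\vert\nabla\pind\vert^2$-coefficient vanishes and $3\varmine-2\pind\varmine'\le 0$, and then invoke Proposition \ref{prop 1}. The only differences are cosmetic: the paper takes $\alpha=3t$ and $\varmine=(\sqrt{\pind}-\sqrt{\varrho})^2$ with $\varrho\in(1/9,\varepsilon)$ and then lets $\varrho\to 1/9$, whereas you take $\alpha=2t$ and set $\varmine=(3\sqrt{\pind}-1)^2$ directly, which is harmless since $\pind\ge\varepsilon>1/9$ already keeps $\varmine$ uniformly bounded away from zero.
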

\begin{proof}
We use \eqref{sec prelim} with $\alpha(t)=3t$, and
$$\varmine(\pind)=(\sqrt{\pind}-\sqrt{\varrho})^2,\quad \text{with} \quad1/9<\varrho<\varepsilon.$$ Since $\pind\ge\varepsilon$ is preserved, we get $\varmine>0$ for all $t$. Moreover, $\varmine$ is the solution of the ODE
$$2\pind\frac{\varmine''}{\varmine}+\frac{\varmine'}{\varmine}-\pind\Bigl(\frac{\varmine'}{\varmine}\Bigr)^2=0,\quad\varmine(\varrho)=0.$$
So with this choice of $\varmine$ inequality \eqref{sec prelim} becomes
\begin{eqnarray}
\left(\dt-\Delta\right)w
&\le&\frac{\varmine'}{\varmine}\langle\nabla\pind,\nabla w\rangle+\frac{\vert A\vert^2}{\varmine}\bigl(\bdot\alpha+3\alpha\vert A\vert^2-2\pind\varmine' w\bigr)\nonumber\\
&=&\frac{\varmine'}{\varmine}\langle\nabla\pind,\nabla w\rangle+\Bigl(3-2\pind\frac{\varmine'}{\varmine}\Bigr)\vert A\vert^2w.\nonumber
\end{eqnarray}
On the other hand $\varrho>1/9$, and $\pind< 1$ implies
$$3\varmine-2\pind\varmine' =3(\sqrt{\pind}-\sqrt{\varrho})^2-2\sqrt{\pind}(\sqrt{\pind}-\sqrt{\varrho})=(\sqrt{\pind}-3\sqrt{\varrho})(\sqrt{\pind}-\sqrt{\varrho})<0$$
which gives
\begin{eqnarray}
\left(\dt-\Delta\right)w
&\le&\frac{\varmine'}{\varmine}\langle\nabla\pind,\nabla w\rangle.
\end{eqnarray}
The maximum principle in Proposition \ref{prop 1} gives
$$\frac{3t\vert A\vert^2+1}{(\sqrt{\pind}-\sqrt{\varrho})^2}\le\sup_{\M_0}\frac{1}{(\sqrt{\pind}-\sqrt{\varrho})^2}\le\frac{1}{(\sqrt{\varepsilon}-\sqrt{\varrho})^2},$$
for any $\varrho\in(1/9,\varepsilon)$. For $\varrho\to1/9$ we thus obtain, using $\pind\le1$,
$$3t\vert A\vert^2\le\frac{(\sqrt{\pind}-1/3)^2}{(\sqrt{\varepsilon}-1/3)^2}-1\le\frac{4}{(3\sqrt{\varepsilon}-1)^2},$$
and the claim follows.
\end{proof}
\begin{remark}
Under the assumption $\pind>1/9$, Corollary \ref{coro 7.1} gives an alternative proof of a weaker version of the \textsc{Bernstein} theorem in \cite{AJ18} by letting $t$ tend to infinity.
For general \textsc{Bernstein} type theorems concerning entire area decreasing graphs of arbitrary codimension, we refer to \cite{DJX21}.
\end{remark}

\end{document}